\newtheorem{theorem}{Theorem}[section]
\newtheorem{lemma}[theorem]{Lemma}
\newtheorem{proposition}[theorem]{Proposition}
\newtheorem{remark}{Remark}
\numberwithin{equation}{section}
\def\R{{\mathbb R}}
\def\E{{{\mathbb E}\,}}
\def\P{{\mathbb P}}
\def\Z{{\mathbb Z}}
\def\N{{\mathbb N}}
\def\al{{\alpha}}
\def\eps{\varepsilon}
\def\sgn{{\mathop {{\rm sgn\, }}}}
\def\Var{{\mathop {{\rm Var\, }}}}
\def\square{{\vcenter{\vbox{\hrule height.3pt
        \hbox{\vrule width.3pt height5pt \kern5pt
           \vrule width.3pt}
        \hrule height.3pt}}}}
\def\tlint{{- \kern-0.85em \int \kern-0.2em}}
\def\dlint{{- \kern-1.05em \int \kern-0.4em}}
\def\si{\sigma}
\def\be{{\beta}}
\def\si{{\sigma}}
\def\al{{\alpha}}
\def\be{{\beta}}
\def\ga{{\gamma}}
\def\si{{\sigma}}
\def \eref#1{\hbox{(\ref{#1})}}
\def \eref#1{\hbox{(\ref{#1})}}
\def\si{{\sigma}}
\def\al{{\alpha}}
\newenvironment{proof}[1][Proof]{\noindent\textit{#1.} }{\hfill \rule{0.5em}{0.5em}}
\author{Yudan Xiong, Fangjun Xu and Jinjiong Yu}
\begin{document}

\title{Limit theorems for functionals of linear processes in critical regions}
\date{\today}
\maketitle

\begin{abstract}

Let $X=\{X_n: n\in\mathbb{N}\}$ be the linear process defined by $X_n=\sum^{\infty}_{j=1} a_j\varepsilon_{n-j}$,
where the coefficients $a_j=j^{-\beta}\ell(j)$ are constants with $\beta>0$ and $\ell$ a slowly varying function, and the innovations $\{\varepsilon_n\}_{n\in\Z}$ are i.i.d.\ random variables belonging to the domain of attraction of an $\alpha$-stable law with $\al\in(0,2]$.
Limit theorems for the partial sum $
S_{[Nt]}=\sum^{[Nt]}_{n=1}[K(X_n)-\mathbb{E}K(X_n)]$ with proper measurable functions $K$ have been extensively studied, except for two critical regions: I.~$\al\in(1,2),\beta=1$ and II.~$\al\be=2,\be\geq1$.
In this paper, we address these open scenarios and identify the asymptotic distributions of $S_{[Nt]}$ under mild conditions.
\end{abstract}

\noindent
{\it MSC 2010.} Primary: 60F05, 60G10; Secondary 60E07, 60E10.\newline
{\it Keywords:} Linear process, long/short memory, limit theorem, domain of attraction of stable law.

\section{Introduction}

A linear process $X=\{X_n: n\in\mathbb{N}\}$ is defined by
\begin{align} \label{lp}
X_n=\sum^{\infty}_{j=1} a_j\varepsilon_{n-j}, 
\end{align}
where $a_j$'s are constant coefficients, and the innovations $\{\varepsilon_n\}_{n\in\Z}$ are i.i.d.\ random variables belonging to the domain of attraction of an $\alpha$-stable law with $\al\in(0,2]$.
The linear processes \eqref{lp} form a fundamental class of time series models, and are widely applied in diverse fields such as physics, economics, finance, network engineering, etc. For a comprehensive overview, see \cite{BFGK13, DOT03} and references therein.
Among the extensive body of literature on linear processes, significant attention has been given to studying the functional limit theorems for the partial sum
\begin{equation}
	S_{[Nt]}:=\sum^{[Nt]}_{n=1}\big[K(X_n)-\mathbb{E}K(X_n)\big],
\end{equation}
where $K$ is some suitable measurable functions and $[Nt]$ is the integer part of $Nt$.

We assume the coefficients in (\ref{lp}) take the form 
\begin{equation}\label{aj}
	a_j=j^{-\beta} \ell(j),
\end{equation}
where $\beta>0$ and $\ell:(0,\infty)\to(0,\infty)$ is a slowly varying function at $\infty$, i.e., $\lim_{t\to\infty}\ell(ct)/\ell(t)=1$ for any constant $c>0$.
For the innovations, we moreover assume that they are symmetric for $\alpha = 1$ and centered for $\alpha>1$.
By \cite[Theorems 2.6.1 and 2.6.2]{IL}, there exist nonnegative constants $\sigma_1, \sigma_2\geq 0$ with $\sigma_1+\sigma_2>$ such that as $x\to\infty$,
\begin{align} \label{h}
	\mathbb{P}(\varepsilon_1\leq -x)&=(\sigma_1+o(1))x^{-\alpha} h(x)\quad \text{and}\quad \mathbb{P}(\varepsilon_1>x)=(\sigma_2+o(1))x^{-\alpha} h(x),
\end{align}
where $h(x)$ is a positive slowly varying function at $\infty$. Equivalently, the characteristic function $\phi_{\eps}(u)$ of $\eps_1$ satisfies (see, e.g., \cite{IL})
\begin{align} \label{char}
	\ln \phi_{\eps}(u)=\ln \mathbb{E} [e^{\iota u \varepsilon_1}] =-(\sigma+o(1))|u|^{\alpha}H(|u|^{-1})(1-\iota D \sgn(u))\quad \text{as}~u\to0,
\end{align}
where $\iota=\sqrt{-1}$, $\sigma=(\sigma_1+\sigma_2)\Gamma(|\alpha-1|)\cos(\frac{\pi\alpha}{2})$, $D=\frac{\sigma_1-\sigma_2}{\sigma_1+\sigma_2}\tan(\frac{\pi\alpha}{2})$ in case $\alpha \neq 1$; $\sigma=(\sigma_1+\sigma_2)\frac{\pi\alpha}{2}$, $D=0$ in case $\alpha = 1$, and 
\begin{align} \label{H}
	H(t)=\left\{\begin{array}{ll}
		\displaystyle h(t) & \displaystyle \text{for}\; \alpha\in (0,2), \\ [5pt]
		-\int^t_0 s^2 d\frac{h(s)}{s^2} & \displaystyle \text{for}\; \alpha=2
	\end{array}\right.
\end{align}
is also a slowly varying function.

By the three series theorem, the linear process $X=\{X_n: n\in\mathbb{N}\}$ in \eref{lp} is well-defined if and only if $\sum^{\infty}_{j=1} a_j^{\alpha}H(a_j^{-1})<\infty$. 
Recalling $a_j$ from (\ref{aj}), the summand is $(-\alpha\beta)$-regularly varying in $j$, and therefore the linear process $X$ exists if $\alpha\beta>1$. 
Based on this condition, $X$ is said to exhibit long memory (resp.\ short memory) if $\sum_{j=1}^{\infty}\sqrt{1-|\phi_{\eps_1}(a_j\lambda)|^2}=\infty$ (resp.\ $<\infty$) for all $\lambda$ close enough to 0, see \cite[Definition~2.1]{SSX}.
Expanding the characteristic function $\phi_{\eps}(u)$ in (\ref{char}) at $0$, we see that $X$ is a long memory linear process if 
\begin{equation}\label{lmcri}
	\sum_{j=1}^{\infty}a_j^{\frac{\al}{2}}H^{\frac{1}{2}}(a_j^{-1})=\infty,
\end{equation} and is a short memory linear process otherwise.

Limit theorems for $S_{[Nt]}$ have been established under various mild conditions.
Let $A_N$ denote the scaling factor such that $S_{[Nt]}/A_N$ converges.
\begin{itemize}
	\item In the short memory region $\al\be>2$, with $A_N=\sqrt{N}$, $S_{[Nt]}/A_N$
	converges in the process level to a Brownian motion \cite{Hsing}, and the conditions for convergence of marginal distributions were later relaxed in \cite{PT03}.

	\item In the long memory region $1<\alpha\beta<2$, the limiting behavior depends on $\be$.
	Assume that the slowly varying functions $\ell$ in (\ref{aj}) and $H$ in (\ref{H}) are constants:
	\begin{itemize}
		\item [$\bullet$] For $1/\al<\be<1$, \cite{KS01} showed that $A_N=N^{1-\be+1/\al}$, and $S_{N}/A_N$
	converges to an $\al$-stable distribution. 
		\item [$\bullet$] For $1<\be<2/\al$, \cite{Hon09, sur02} showed that $A_N=N^{1/\al\be}$, and $S_{[Nt]}/A_N$ converges to an $\al\be$-stable process.
	\end{itemize} 
	Extensions of these results for general $\ell$ and $H$ were achieved in \cite{LXX}, introducing slowly varying corrections to $A_N$ without changing the limits. 
\end{itemize}
However, two critical regions remain widely open:

{\hspace{1pt}I.} $\al\in(1,2)$ and $\beta=1$; \quad II. $\al\be=2$ and $\be\geq1$.

{\noindent}Only for a special choice $K(x)=x$ and $\beta=1$, limit theorems for $S_{[Nt]}$ have been previously established in \cite{PSXY} and \cite{X}. 
In this paper, we address these two critical cases, deriving the asymptotic behavior of $S_{[Nt]}$ under mild conditions. While some techniques from the works cited above remain applicable, new ideas are required for analyzing long memory linear processes in the critical regions, particularly for Theorems~\ref{thm1} and \ref{thm4} presented below.

Before stating our main results, we first make the following two assumptions:
\begin{enumerate}
	\item[({\bf A1})] $K$ is a real-valued integrable and square integrable function on $\mathbb{\R}$.
	\item[({\bf A2})] There exist strictly positive constants $c$ and $\delta$ such that for all $u\in\mathbb{R}$,
	$|\phi_{\varepsilon}(u)|\leq \frac{c}{1+|u|^{\delta}}$.
\end{enumerate}
The assumption ({\bf A2}) and {$a_j= j^{-\beta}\ell(j)$} imply (e.g., see \cite{LXX}) that
\begin{enumerate}
	\item [(i)] the linear process $X=\{X_n: n\in\mathbb{N}\}$ defined in \eref{lp} has a bounded and differentiable probability density function $f(x)$;
	\item [(ii)] the characteristic function $\phi(u)$ of $X_n$ decays to 0 faster than any polynomial as $|u|\to\infty$. Moreover, ({\bf A2}) implies that there exists $m\in\mathbb{N}$ such that $|\phi_{\varepsilon}(u)|^m$ is less than a constant multiple of $\frac{1}{1+|u|^4}$.
\end{enumerate}
To facilitate the analysis, we introduce a useful auxiliary function $K_\infty(x):=\E[K(X_1+x)]$, which has been utilized in pioneer works \cite{HH96,HH97}.
Under the assumption ({\bf A2}), $K_\infty$ is differentiable and 
\begin{equation}\label{Kinfd}
	K_\infty'(0):= - \int_\R K(x) d f(x).
\end{equation}
To ease notations, for any constants $\ga,\kappa>0$ and slowly varying function $G$, we denote $1/\ga$-regularly varying functions
\begin{equation}\label{Rsim}
	R_{\ga,G}(x):=x^{\frac{1}{\ga}}G^{\frac{1}{\ga}}(x)
	\quad\text{and}\quad
	R_{\ga,G,\kappa}(x):=x^{\frac{1}{\ga}}G^{\frac{1}{\ga}}(x^{\frac{1}{\kappa}}).
\end{equation}

{\noindent\bf Region I. $\al\in(1,2)$ and $\be=1$.}\\
In this region, $a_j^{\al/2}H^{1/2}(a_j^{-1})$ is $(-\al/2)$-regularly varying in $j$ and is not summable, thereby the long memory criterion (\ref{lmcri}) holds.
To state our first main result, we introduce the $\alpha$-stable process $Z^{\alpha}=\{Z^{\alpha}_t: t\geq 0\}$ which has the characteristic function
\begin{align} \label{stablep}
	\mathbb{E}[e^{\iota u Z^{\alpha}_t}]=\exp\big(-t\sigma |u|^{\alpha}(1-\iota D \sgn u)\big),
\end{align}
where $\sigma$ and $D$ are the same as in (\ref{char}).
Let 
\begin{equation}\label{L}
	L(N):=\sum_{j=1}^{N}a_j= \sum_{j=1}^{N}j^{-1}\ell(j)
\end{equation}
denote the partial sum of coefficients. By Karamata's theorem \cite[Theorem~1.6.5]{BGT89}, $L(N)$ is a slowly varying function satisfying $\lim_{N\to\infty}\ell(N)/L(N)=0$. For $x\notin\N$, $L(x)$ is defined by linear interpolation. By \cite[Theorem~1.5.13]{BGT89}, there exists a unique slowly varying function $H_{\alpha}$, up to asymptotic equivalence, such that as $N\to\infty$,
\begin{align} \label{halpha}
	H(N_{\al,H_\al})\sim H_{\alpha}(N),
\end{align}
where the simplified notation $N_{\al,H_\al}$ refers to the $(1/\al)$-regularly varying in (\ref{Rsim})
\begin{equation}\label{Nsim}
	N_{\al,H_\al}:=R_{\al,H_\al}(N)=N^{\frac{1}{\alpha}}H^{\frac{1}{\alpha}}_{\alpha}(N).
\end{equation}
\begin{theorem} \label{thm1} 
	For $1<\alpha<2$ and $\beta=1$, if assumptions  $(\bf A1)$ and $(\bf A2)$ hold, then as $N\to\infty$,
	\[
	\big\{A^{-1}_N S_{[Nt]}: t\geq 0\big\} \overset{\rm f.d.d.}{\longrightarrow} \big\{K'_\infty(0)\hspace{1pt} Z^{\alpha}_t:\; t\geq 0 \big\},
	\]
	where the scaling factor $A_N$ is given by
	\begin{equation}\label{sca1}
		A_N:=\inf\{x>0: x-N_{\al,H_\al}\big(L(N)-L(x)\big)\geq 0\},
	\end{equation}
	$K'_\infty(0)$ is the constant in \eqref{Kinfd}, $Z^{\alpha}=\{Z^{\alpha}_t: t\geq 0\}$ is the $\alpha$-stable process specified by \eqref{stablep}, 
	and $\overset{\rm f.d.d.}{\longrightarrow}$ denotes weak convergence in finite dimensional distributions.
\end{theorem}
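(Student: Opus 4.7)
The plan is a Ho-Hsing style linearization that reduces Theorem~\ref{thm1} to an $\alpha$-stable central limit theorem for a weighted sum of i.i.d.\ innovations. The first step is to establish the decomposition
\[
S_{[Nt]} = K'_\infty(0) \sum_{n=1}^{[Nt]} X_n + R_N(t),
\qquad R_N(t)/A_N \to 0 \text{ in probability}.
\]
The starting point is that $K_\infty$ is differentiable at $0$ under (A2), with derivative given by \eqref{Kinfd}. A backward martingale decomposition of $K(X_n) - \mathbb{E}K(X_n)$ with respect to the filtration $\cF_n = \sigma(\varepsilon_k : k \leq n)$, combined with a first-order Taylor expansion of a conditional version of $K_\infty$, gives that each martingale increment equals $K'_\infty(0)\, a_{n-k} \varepsilon_k$ plus a higher-order correction. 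The correction is controlled by combining property (i) (smoothness of $f$) with $L^p$-moment bounds on $a_j \varepsilon$ for some $p < \alpha$.

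After the linearization, I would swap the summation order to express the main term as a weighted sum of innovations:
\[
\sum_{n=1}^{[Nt]} X_n = \sum_{k \leq [Nt]-1} c_{N,t,k}\, \varepsilon_k,
\qquad c_{N,t,k} = \sum_{n=\max(1,k+1)}^{[Nt]} a_{n-k},
\]
so that $c_{N,t,k} = L([Nt]-k)$ for $1 \leq k \leq [Nt]-1$ and $c_{N,t,k} = L([Nt]-k) - L(-k)$ for $k \leq 0$, where $L$ is the slowly varying partial sum in \eqref{L}. Using \eqref{char} and independence, the log-characteristic function of $A_N^{-1}\sum_k c_{N,t,k}\varepsilon_k$ at $u$ is asymptotic to
\[
-\sigma|u|^\alpha(1-\iota D\sgn u)\cdot A_N^{-\alpha}\sum_k c_{N,t,k}^\alpha\, H\!\bigl(A_N/c_{N,t,k}\bigr),
\]
and matching with \eqref{stablep} reduces the problem to the asymptotic identity $A_N^{-\alpha}\sum_k c_{N,t,k}^\alpha H(A_N/c_{N,t,k}) \to t$. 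I would split this sum into the bulk range $1 \leq k \leq [Nt]-1$ and the deep-past range $k \leq 0$, apply Karamata-type estimates for the slowly varying $L$, and invoke the asymptotic equivalence \eqref{halpha}. The implicit definition \eqref{sca1} of $A_N$ is tailored precisely so that these contributions add up to $t A_N^\alpha$ to leading order. For joint convergence at $0 \leq t_1 < \cdots < t_\ell$, the same analysis applied to each increment $S_{[Nt_i]} - S_{[Nt_{i-1}]}$, together with a check that residual overlap between innovation blocks is asymptotically negligible, produces the independent-increment structure of $Z^\alpha$.

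I expect two genuinely delicate points. First, because $\alpha<2$ the innovations lack a finite second moment, so the remainder control in the linearization step cannot rely on a routine $L^2$ estimate; a truncation of $\varepsilon$ combined with $L^p$ bounds for some $p < \alpha$ will be required. Second, and more critically, the Karamata asymptotic derived above is the novel analytic ingredient in this regime: unlike the non-critical long memory case $1/\alpha<\beta<1$, here the coefficients $c_{N,t,k}$ are governed by the slowly varying $L$, and the bulk and deep-past contributions are of comparable order. The implicit definition \eqref{sca1} of $A_N$ packages this balance neatly, and the main technical effort lies in verifying the resulting asymptotic rigorously under the mild conditions of the theorem.
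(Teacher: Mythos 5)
Your overall architecture (approximate $S_N$, linearize, prove a stable CLT for a weighted sum of innovations) is in the spirit of the paper, but the specific linearization you propose fails in this critical regime, and this failure is precisely the difficulty the theorem is about. The decomposition $S_{[Nt]}=K'_\infty(0)\sum_{n\le [Nt]}X_n+R_N(t)$ with $R_N(t)=o_{\P}(A_N)$ is false for $\beta=1$. In the paper's intermediate object $T_N=\sum_n\sum_j[K_\infty(a_j\eps_{n-j})-\E K_\infty(a_j\eps_{n-j})]$, the Taylor replacement of the summand by $K'_\infty(0)a_j\eps_{n-j}$ is legitimate only for $j\gtrsim A_N\approx N^{1/\alpha}\cdot(\mathrm{s.v.})$; for $j\lesssim A_N$ the products $a_j\eps_{n-j}$ are not uniformly small (the largest of $N$ heavy-tailed innovations is of order $N^{1/\alpha}$), and the corresponding block $\sum_{n\le N}\sum_{j\le A_N}\xi_{n,j}$, with $\xi_{n,j}=K_\infty(a_j\eps_n)-\E K_\infty(a_j\eps_n)$, is shown in Proposition~\ref{prop3} to be $o_{L^2}(A_N)$ --- it is \emph{negligible}, not linear. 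Your linearization instead credits this block with the contribution $K'_\infty(0)L(A_N)\sum_{n\le N}\eps_n$, which has order $L(A_N)\,N_{\al,H_\al}$ and is comparable to $A_N=N_{\al,H_\al}(L(N)-L(A_N))$ whenever $L(A_N)\asymp L(N)$ (for $L(x)=\ln\ln x$ it is even of strictly larger order). Hence $R_N$ is of the same order as, or larger than, $A_N$, and the first step of your plan cannot be carried out.

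The same error surfaces in your key Karamata identity. With $c_{N,1,k}=L(N-k)$ in the bulk, one gets $\sum_k c_{N,1,k}^\alpha H(A_N/c_{N,1,k})\sim N L(N)^\alpha H_\alpha(N)$, whereas $A_N^\alpha= N H_\alpha(N)\big(L(N)-L(A_N)\big)^\alpha$; the ratio behaves like $\big(L(N)/(L(N)-L(A_N))\big)^\alpha$, which equals $(1-\alpha^{-\kappa})^{-\alpha}>1$ in the limit for $L=(\ln)^\kappa$ and diverges for $L=\ln\ln$, so the claimed convergence to $t$ is false. This is exactly why the paper defines $A_N$ implicitly through $L(N)-L(A_N)$ rather than through $L(N)$: the true main term is $K'_\infty(0)\big(L(N)-L([A_N])\big)\sum_{n\le N}\eps_n$, i.e.\ only the coefficients $a_j$ with $j>A_N$ enter the linear part. (Also, the deep-past terms $k\le 0$ are negligible here, not comparable to the bulk; see Proposition~\ref{prop34}.) To repair your argument you would need the cut at $j=A_N$, Plancherel/characteristic-function second-moment bounds showing $\sum_{n\le N}\sum_{N^\delta<j\le A_N}\xi_{n,j}=o_{L^2}(A_N)$ via $\ell(A_N)/(L(N)-L(A_N))\to0$ (Lemma~\ref{lemslow}), and an $L^1$ bound on the Taylor remainder restricted to $j>A_N$. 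In the non-critical region $1/\alpha<\beta<1$ your scheme does work because the head $\sum_{j\le A_N}a_j\asymp A_N^{1-\beta}$ is negligible relative to $N^{1-\beta}$; at $\beta=1$ the partial sums $L$ are slowly varying and the head $L(A_N)$ is \emph{not} negligible relative to $L(N)$, which is the new phenomenon your proposal misses.
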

\begin{remark}\label{rem1}
	{\rm (1)} It is easy to see that $N_{\al,H_\al}\big(L(N)-L(A_N)\big)\leq A_N<N_{\al,H_\al}\big(L(N)-L(A_N-1)\big)+1$.

	{\rm(2)} It is helpful to recall three typical examples of slowly varying functions:
	$$\setlength\abovedisplayskip{5pt}\setlength\belowdisplayskip{5pt}
		(i)~\ln\ln x, \qquad (ii)~(\ln x)^\kappa \quad(\kappa\in\mathbb{R}), \qquad (iii)~e^{(\ln x)^\kappa}\quad (0<\kappa<1).
	$$
	For the increasing slowly varying function $L$, if it is of the second or third type, then it follows that $L(N)-L(A_N)\sim L(N)$. Thereby, the scaling factor  $A_N\asymp N_{\alpha,H_{\alpha}}L(N)=N^{\frac{1}{\alpha}}H_{\alpha}^{\frac{1}{\alpha}}L(N)$, where $f_N\asymp g_N$ denotes that $f_N$ and $g_N$ have the same order as $N\to\infty$.
	This aligns with Theorem~\ref{thm4} when $\al=2,\be=1$.
	
	{\rm(3)} If $K'_{\infty}(0)=0$, then a nontrivial scaling limit of $S_{[Nt]}$ has been obtained in \cite[Theorem~1.3]{LXX}.
\end{remark}

{\noindent\bf Region II. $\al\be=2$ and $\be\geq1$, long memory.}\\
For $\al\be=2$, the series $\sum_{j=1}^{\infty}a_j^{\frac{\al}{2}}H^{\frac{1}{2}}(a_j^{-1})$ in the long/short memory criterion is an infinite sum with $(-1)$-regularly varying summand, which can be either divergent or convergent depending on $\ell$ in (\ref{aj}) and $H$ in (\ref{char}).
Though $S_{[Nt]}$ converges to a Brownian motion in both cases, the components of $S_{[Nt]}$ that contributes to the limit differ significantly, as do the scaling factors.
We first focus on the long-memory case, where it is necessary to further distinguish between the case on the critical curve $\al\be=2,\be>1$ and the case at the critical point $\al=2,\be=1$.

According to the Karamata representation \cite[Theorem~1.3.1]{BGT89}, the slowly varying function $\ell$ can be expressed as 
$\ell(x)=\sigma_0(x) e^{\int^x_1 \frac{\eta(t)}{t}}dt$, where $\lim_{x\to\infty}\sigma_0(x)=\sigma_0\neq 0$ and $\lim_{t\to\infty}\eta(t)=0$.
To avoid extra tedious analysis on the slowly varying functions, we impose the following technical condition on $\ell$ to guarantee that it is regular enough and does not diverge too rapidly. 
\begin{enumerate}
	\item[({\bf T1})] The slowly varying function $\ell$ has the Karamata representation $
		\ell(x)=\sigma_0 \, e^{\int^x_1 \frac{\eta(t)}{t}}dt$
	where $\sigma_0>0$, and $|\eta(t)|\leq (\ln t)^{-\ga}$ for some $\ga>1/2$ eventually (for sufficiently large $t$).
\end{enumerate}
Notably, functions such as $e^{(\ln x)^{\ga}}$ with $1/2\leq \ga< 1$ does not satisfy this condition.
Assuming ({\bf T1}), we observe that $R_{\be,\ell,\be}(x)=x^{\frac{1}{\be}} \ell^{\frac{1}{\be}}(x^{\frac{1}{\be}})$ is continuous and strictly increasing on $(A,\infty)$ for some $A>0$.
Thus, it has a continuous inverse
\begin{equation}\label{RInv}
	R_{\be,\ell,\be}^{\leftarrow}(x):=\inf\{s>A: R_{\be,\ell,\be}(s)\geq x \}.
\end{equation}
When there is no ambiguity, we write $R^{\leftarrow}_{\beta}(x)$ for $R_{\be,\ell,\be}^{\leftarrow}(x)$ for simplicity.
Since $R_{\be,\ell,\be}$ is $(1/\be)$-regularly varying, $R^{\leftarrow}_{\beta}$ is $\be$-regularly varying by \cite[Theorem~1.5.12]{BGT89}.
Consequently, the function 
\begin{equation*}
	\overline{H}(x):=\int_0^x t \hspace{1pt} \frac{h(R^{\leftarrow}_{\beta}(t))}{(R^{\leftarrow}_{\beta}(t))^{\alpha}}\, dt
\end{equation*}
is also slowly varying. 
As in (\ref{halpha}), up to asymptotic equivalence, there exists a unique slowly varying function $\overline{H}_2(N)$ such that
$\overline{H}(N_{2,\overline{H}_2})\sim \overline{H}_2(N)$, where $N_{2,\overline{H}_2}=N^{\frac{1}{2}}\overline{H}^\frac{1}{2}_2(N)$ (recall (\ref{Nsim})).
\begin{theorem} \label{thm2} 
	For either {\rm (a)} $\al\be=2$ and $\be>1$, or {\rm (b)} $\al=2,\be=1$ and $K'_\infty(0)=0$,
	if assumptions {\rm({\bf A1})}, {\rm({\bf A2})} and {\rm({\bf T1})} hold and $\lim_{x\to\infty}\overline{H}(x)=\infty$, then, as $N\to\infty$,
\[
		\big\{ A_N^{-1} S_{[Nt]}: t\geq 0\big\} \overset{\rm f.d.d.}{\longrightarrow} \big\{ \gamma W_t:\; t\geq 0 \big\},
		\]
		where $A_N= N^{\frac{1}{2}}\overline{H}^\frac{1}{2}_2(N)$, $(W_t)_{t\geq0}$ is the standard Brownian motion, and
		${\gamma=\sqrt{2(\gamma_1+\gamma_2)}}$ with 
\[
\left\{\begin{array}{l}
\displaystyle \gamma_1=\sigma_2 |C^+_K|^{2} 1_{\{C^+_K<0\}}+\sigma_1 |C^-_K|^{2} 1_{\{C^-_K<0\}}, \\ [10pt]
\displaystyle \gamma_2=\sigma_2 |C^+_K|^{2} 1_{\{C^+_K>0\}}+\sigma_1 |C^-_K|^{2} 1_{\{C^-_K>0\}}
\end{array}\right.
\]
and $C^{\pm}_K:= \int_{0}^{\infty} (K_{\infty}(\pm t^{-\beta}) - K_{\infty}(0))\, dt$.
\end{theorem}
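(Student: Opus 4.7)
My overall strategy is to reduce $S_{[Nt]}$ to a sum of independent contributions indexed by the innovations $\varepsilon_k$, then invoke a Lindeberg--Feller CLT with slowly varying truncation. First I would use a Hoeffding-type decomposition and capture the contribution of $\varepsilon_k$ to $K(X_n)-\mathbb{E}K(X_n)$ via
\[
u_k^{(n)}:=\mathbb{E}[K(X_n)\mid \varepsilon_k]-\mathbb{E}K(X_n)=K_\infty^{(k)}(a_{n-k}\varepsilon_k)-K_\infty(0),
\]
where $K_\infty^{(k)}(y):=\mathbb{E}[K(y+X_n-a_{n-k}\varepsilon_k)]\to K_\infty(y)$ as $n-k\to\infty$. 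Summing over $n$ and replacing $K_\infty^{(k)}$ by $K_\infty$ yields, up to a controlled error,
\[
S_{[Nt]}\;\approx\;\sum_{k\le[Nt]}\Phi_{k,N,t}(\varepsilon_k),\qquad
\Phi_{k,N,t}(z):=\sum_{n=(k+1)\vee1}^{[Nt]}\bigl[K_\infty(a_{n-k}z)-K_\infty(0)\bigr],
\]
modulo higher-order Hoeffding interactions $u_{kl}^{(n)}$ that must be shown to contribute only $o_P(A_N)$, using the smoothness of $f$ from~{\rm(\bf A2)} and the $L^1\cap L^2$ bound of $K$ from~{\rm(\bf A1)}.

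The heart of the argument is the asymptotic analysis of $\Phi_{k,N,t}(z)$, which I would derive by replacing the sum with the integral $\int_0^{[Nt]-k}[K_\infty(s^{-\beta}\ell(s)z)-K_\infty(0)]\,ds$ and changing variables through the inverse $R_\beta^{\leftarrow}$ of~\eqref{RInv}, obtaining
\[
\Phi_{k,N,t}(z)\;\sim\;\bigl(C_K^+\mathbf{1}_{z>0}+C_K^-\mathbf{1}_{z<0}\bigr)\,|z|^{1/\beta}\qquad\text{as }|z|\to\infty.
\]
The integrals $C_K^\pm$ are finite exactly under the stated hypotheses: in case~(a), $\beta>1$ guarantees integrability at $\infty$ of $|K_\infty(\pm t^{-\beta})-K_\infty(0)|\sim|K'_\infty(0)|\,t^{-\beta}$; in case~(b), the assumption $K'_\infty(0)=0$ removes the borderline obstruction when $\beta=1$, with the residual quadratic term $t^{-2}$ carrying the integral. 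By~\eqref{h}, $\Phi_{k,N,t}(\varepsilon_k)$ then has positive and negative tails of order $\gamma_2\,x^{-2}h(x^\beta)$ and $\gamma_1\,x^{-2}h(x^\beta)$, placing it exactly on the boundary of the normal domain of attraction. Calibrating the truncation through $N\,\mathbb{E}[\Phi_{k,N,t}(\varepsilon)^2\mathbf{1}_{\{|\Phi_{k,N,t}(\varepsilon)|\le A_N\}}]\sim A_N^2$ via a Karamata computation both reproduces the scaling $A_N=N^{1/2}\overline{H}_2^{1/2}(N)$ and identifies the limit variance as $2(\gamma_1+\gamma_2)$; the Lindeberg condition is automatic from the slow variation of the norming.

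The main obstacle is the uniform control of the various error terms, which split naturally into three regimes according to the size of $\varepsilon_k$. For moderate $\varepsilon_k$, condition~{\rm(\bf T1)} is essential in converting the discrete Riemann sum into the above integral and in pulling $\ell^{1/\beta}$ through the Karamata calculation without inducing logarithmic blow-ups; this is why pathological slowly varying functions such as $e^{(\ln x)^\gamma}$ with $\gamma\ge 1/2$ are excluded. For very large $\varepsilon_k$, the heavy-tail bound~\eqref{h} combined with truncation at $A_N$ ensures that the discarded contribution is $o_P(A_N)$. The small-innovation regime is the most delicate, since there $\Phi_{k,N,t}(\varepsilon_k)\approx K'_\infty(0)\varepsilon_k\sum_{j=1}^{[Nt]-k}a_j$ is not captured by the large-$|z|$ asymptotic: in case~(a), $\sum_{j}a_j<\infty$ makes this a bounded-variance contribution dominated by the Karamata tail variance, whereas in case~(b), $\sum_j a_j$ diverges logarithmically and one must invoke $K'_\infty(0)=0$ to cancel the offending linear term, leaving the subordinate quadratic term $\tfrac12 K''_\infty(0)\varepsilon_k^2\sum a_{n-k}^2$ (with $\sum a_{n-k}^2<\infty$ since $\beta\ge 1/2$). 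Finally, joint convergence of finite-dimensional distributions follows by applying the same machinery simultaneously at $0<t_1<\cdots<t_m$: the increment $S_{[Nt_i]}-S_{[Nt_{i-1}]}$ reduces, up to negligible long-range contributions from earlier innovations, to a sum over the disjoint block $\{k:[Nt_{i-1}]<k\le[Nt_i]\}$, so that asymptotic independence of the Gaussian marginals and the independent-increments structure of $\gamma W$ become transparent.
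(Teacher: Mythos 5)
Your proposal is correct in outline and follows essentially the same route as the paper: after the higher-order terms are discarded (the paper's Proposition~\ref{prop31} and Lemma~\ref{lm51}), your $\sum_{k}\Phi_{k,N,t}(\varepsilon_k)$ is exactly the paper's $T_{[Nt]}$ reindexed by innovation, and the paper likewise establishes the asymptotic $\eta_K(x)\sim C_K^{\pm}\,|x|^{1/\beta}\ell^{1/\beta}(|x|^{1/\beta})$, the tail calibration via $R_{\beta}^{\leftarrow}$ under {\rm(\bf T1)}, and the resulting normal limit with variance $2(\gamma_1+\gamma_2)$. The only differences are cosmetic: the paper passes to the exactly i.i.d.\ sum $\widetilde{T}_N=\sum_{n}\eta_K(\varepsilon_n)$ and applies the classical domain-of-attraction criterion via characteristic functions rather than a triangular-array Lindeberg--Feller argument, and your displayed asymptotic for $\Phi_{k,N,t}(z)$ omits the slowly varying factor $\ell^{1/\beta}(|z|^{1/\beta})$ that your own change of variables through $R_{\beta}^{\leftarrow}$ presupposes.
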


\begin{remark}\label{rem3}
	Assumptions of Theorem~\ref{thm2} do not guarantee that the linear process $X=\{X_n: n\in\mathbb{N}\}$ has long memory. If in Theorem~\ref{thm2} we assume that $\sum_{j=1}^{\infty}j^{-1}((j^2a_j^\alpha h(a_j^{-1}))\wedge1)=\infty$ instead of $\lim_{x\to\infty}\overline{H}(x)=\infty$, then by the definition of $a_j$ and (\ref{H}),
	\begin{align*}
	\sum_{j=1}^{\infty}a_j^{\frac{\al}{2}}H^{\frac{1}{2}}(a_j^{-1}) = \sum_{j=1}^{\infty} j^{-1}\ell^{\frac{\al}{2}}(j) H^{\frac{1}{2}}(a_j^{-1}) \geq c_1\sum_{j=1}^{\infty}j^{-1}((\ell^\alpha(j) h(a_j^{-1}))\wedge1)=\infty
	\end{align*}
and thus the linear process exhibits long memory. Moreover, $\lim_{x \rightarrow \infty} \ell(x \ell^{-\frac{1}{\beta}}(x))/\ell(x)=1$ by Lemma \ref{lem42}. Set $x = y^{\beta} \ell^{-1}(y)$. As $y \rightarrow \infty$, $
	R_{\beta, \ell, \beta}(x) = x^{\frac{1}{\beta}} \ell^{\frac{1}{\beta}}(x^{\frac{1}{\beta}}) = y \ell^{-\frac{1}{\beta}}(y) \ell^{\frac{1}{\beta}}(y \ell^{-\frac{1}{\beta}}(y)) \sim y$. Then as $x \rightarrow \infty$,	$R^{\leftarrow}_{\beta}(x) \sim x^{\beta} \ell^{-1}(x)$. Therefore, as $t \rightarrow \infty$,
	\begin{align*}
		\overline{H}(t)=-\int^t_0 x \frac{h(R^{\leftarrow}_{\beta}(x))}{(R^{\leftarrow}_{\beta}(x))^{\alpha}} dx  \sim \int^t_0 x \frac{h(x^{\beta} \ell^{-1}(x))}{x^{\al\beta} \ell^{-\al}(x)} dx =\int_{0}^{t} \frac{\ell^{\al}(x) h\big(x^{\beta} \ell^{-1}(x)\big)}{x}  dx\to\infty.
	\end{align*}
\end{remark}

The proof of Theorem~\ref{thm2} (ii) is not new in nature. The approach is essentially the same as the one for \cite[Theorems~1.2, 1.3]{LXX}.
However, in general  when $K'_\infty(0)\neq0$, this approach cannot cover the case $\al=2,\be=1$.
To address this critical case, we develop a method under the extra assumption as below.
Recall the slowly varying functions $L$ from (\ref{L}) and $h$ from (\ref{h}).
\begin{enumerate}
	\item[({\bf T2})] For $L$ and $h$, there exist continuous functions $g_L$ and $g_h$ such that for any $\lambda\in(0,1)$,
	\begin{equation}
		\lim\limits_{x\to\infty}\frac{L(e^{\lambda x})}{L(e^x)}=g_L(\lambda)<1,\quad 
		\lim\limits_{x\to\infty}\frac{h(e^{\lambda x})}{h(e^x)}=g_h(\lambda)>0.
	\end{equation}
\end{enumerate}
We point out that for the three types of slowly varying functions in Remark \ref{rem1} (2), $L(x)=\ln\ln x$ or $h(x)=e^{(\ln x)^\kappa}$ does not satisfy ({\bf T2}).

\begin{theorem} \label{thm4} 
	For $\alpha=2$ and $\beta = 1$, if assumptions {\rm({\bf A1})}, {\rm({\bf A2})}, {\rm({\bf T1})} and {\rm({\bf T2})} hold, then {\rm (i)} The linear process exhibits long memory, i.e., $\sum_{j=1}^{\infty}a_jH^{\frac{1}{2}}(a_j^{-1})=\infty$. {\rm (ii)} Let $A_N= 	N^{\frac{1}{2}}H_{2}^{\frac{1}{2}}(N) L(N)$. As $N\to\infty$,
		\[
		\big\{ A_{N}^{-1}S_{[Nt]}: t\geq 0\big\} \overset{\rm f.d.d.}{\longrightarrow} \big\{ \gamma W_t:\; t\geq 0 \big\},
		\]
	where $(W_t)_{t\geq0}$ is the standard Brownian motion, and $\gamma= \sqrt{c_{L, h} (\si_{1}+\si_{2})}\, |K'_\infty(0)|$ with constants $\si_1$ and $\si_2$ given in (\ref{h}) and $
c_{L,h} =\int_{0}^{\frac{1}{2}} (1-g_{L}(y))^{2} g_{h}(y) dy/\int_{0}^{\frac{1}{2}} g_{h}(y) dy$.
\end{theorem}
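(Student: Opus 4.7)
I would handle part (i) by a direct Karamata-type comparison using (\textbf{T1}) and (\textbf{T2}), and part (ii) via a Ho--Hsing style conditioning decomposition that isolates a martingale-difference part of lower order and a main term governed by $K'_\infty(0)\sum_n X_n^{(M-)}$. The desired CLT then reduces to the asymptotic evaluation of the characteristic function of a linear combination of i.i.d.\ innovations, and the constant $c_{L,h}$ is extracted from the scaling laws in (\textbf{T2}).

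\emph{Part (i).} Under (\textbf{T1}), the Karamata representation bounds $|\ln \ell(x)| \leq C(\ln x)^{1-\gamma}$ with $1-\gamma<1/2$, so $\ell$ varies only polylogarithmically. Combined with the slow variation of $H$ inherited from $h$, the summand has the form $a_j H^{1/2}(a_j^{-1}) = j^{-1} \widetilde\ell(j)$ with a slowly varying $\widetilde\ell$. Condition (\textbf{T2}) prevents $\widetilde\ell$ from collapsing across dyadic scales (since $g_L<1$ and $g_h>0$ on $(0,1)$), and Cauchy condensation yields $\sum_j a_j H^{1/2}(a_j^{-1})\ge c\sum_n \widetilde\ell(2^n)=\infty$.

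\emph{Part (ii), decomposition.} Fix a slowly growing truncation $M=M(N)$ and split $X_n = X_n^{(M-)}+X_n^{(M+)}$ with $X_n^{(M-)}=\sum_{j>M} a_j \varepsilon_{n-j}$. Since $X_n^{(M-)}$ is $\mathcal{F}_{n-M-1}$-measurable while $X_n^{(M+)}$ is independent of $\mathcal{F}_{n-M-1}$, conditioning yields $\mathbb{E}[K(X_n)\mid\mathcal{F}_{n-M-1}]=K_M(X_n^{(M-)})$ with $K_M(y):=\mathbb{E}[K(y+\sum_{j=1}^M a_j\varepsilon_{-j})]$. Writing
\begin{equation*}
S_{[Nt]} = \sum_{n=1}^{[Nt]}\bigl[K(X_n) - K_M(X_n^{(M-)})\bigr] + \sum_{n=1}^{[Nt]}\bigl[K_M(X_n^{(M-)}) - \mathbb{E}K_M(X_n^{(M-)})\bigr] =: T_{[Nt]} + U_{[Nt]},
\end{equation*}
$T_{[Nt]}$ is a martingale-difference sum with $\mathbb{E}T_N^2 = O(N) = o(A_N^2)$ by (\textbf{A1}) and the growth $A_N^2 = N H_2(N) L^2(N) \gg N$. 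For $U_{[Nt]}$, the differentiability of $K_\infty$ and the convergence $K_M\to K_\infty$ on compacts (both from (\textbf{A2})) give, via a Taylor expansion at $0$, the pointwise identity $K_M(X_n^{(M-)}) - \mathbb{E}K_M(X_n^{(M-)}) = K'_\infty(0) X_n^{(M-)} +$ quadratic remainder, where the remainder has aggregate $L^2$-norm of order $\sqrt{N}\cdot\mathrm{Var}(X_1^{(M-)})$, which is $o(A_N)$ for $M\to\infty$ slowly enough. This isolates the dominant contribution $K'_\infty(0) \sum_{n=1}^{[Nt]} X_n^{(M-)}$.

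\emph{Part (ii), main CLT and variance (main obstacle).} Rewrite the main term as $\sum_k b_{N,k}^{(M,t)} \varepsilon_k$ with $b_{N,k}^{(M,t)} \approx L([Nt]-k)-L(|k|\vee M)$, and apply (\ref{char}) (with $\alpha=2$, $D=0$) to obtain
\begin{equation*}
\ln \mathbb{E}\bigl[e^{iu A_N^{-1} \sum_k b_{N,k}^{(M,t)} \varepsilon_k}\bigr] \sim -\sigma u^2 A_N^{-2} \sum_k \bigl(b_{N,k}^{(M,t)}\bigr)^2 H\!\left(\frac{A_N}{|u b_{N,k}^{(M,t)}|}\right).
\end{equation*}
The core difficulty is to prove $A_N^{-2} \sum_k (b_{N,k}^{(M,t)})^2 H(A_N/|b_{N,k}^{(M,t)}|) \to t\, c_{L,h}(\sigma_1+\sigma_2)/|\sigma|$. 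Parametrising $|k|=e^{y\ln N}$ with $y\in(0,1)$ and invoking (\textbf{T2}) give $L(N^y)/L(N)\to g_L(y)$ and the $h$-factor inside $H$ scaling as $g_h(y)$; Riemann-sum approximation then converts the discrete sum to the numerator $\int_0^{1/2}(1-g_L(y))^2 g_h(y)\,dy$, while the same parametrisation applied to the normalisation $A_N^2$ itself produces the denominator $\int_0^{1/2} g_h(y)\,dy$, giving exactly $c_{L,h}$. The cutoff at $1/2$ reflects the symmetric balance between the $k\leq 0$ and $k\geq 1$ regimes. Joint Gaussianity of the f.d.d.\ limits and independence of increments across disjoint time intervals follow from the asymptotic quadratic form of $\ln\phi_\varepsilon$ together with letting $M\to\infty$ with $M=o(N)$, so that the overlap of the $b_{N,k}^{(M,t)}$ for disjoint times is negligible.
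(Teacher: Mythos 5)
The central gap is in your linearization of $U_{[Nt]}$. You claim that $K_M(X_n^{(M-)})-\E K_M(X_n^{(M-)})$ equals $K'_\infty(0)X_n^{(M-)}$ plus a quadratic remainder whose aggregate norm is $o(A_N)$, thereby reducing the problem to the linear statistic $K'_\infty(0)\sum_k b_{N,k}^{(M,t)}\varepsilon_k$ with deterministic coefficients. This is exactly what fails in the critical case $\alpha=2$, $\beta=1$. Because the innovations are heavy-tailed ($\P(|\varepsilon_1|>x)\sim x^{-2}h(x)$) and $a_j\asymp j^{-1}\ell(j)$, the Taylor expansion at $0$ is valid only on the event that every summand $a_j\varepsilon_{n-j}$ is small; a single innovation of magnitude $|\varepsilon_m|\approx N^{y}$ makes $a_j\varepsilon_m$ of order $1$ for all lags $j\lesssim N^{y}$, and on that event the ``remainder'' is of order $1$ per lag, not quadratic. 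Resummed over lags, these events contribute a correction $\approx -K'_\infty(0)\varepsilon_m L(|\varepsilon_m|)$ to the effective coefficient of $\varepsilon_m$; this is precisely the paper's key asymptotic $\tilde\eta_K(x)\sim -K'_\infty(0)\,x L(|x|)$ in \eqref{etak1}, where $\tilde\eta_K(x)=\sum_j\big(K_\infty(a_jx)-\E K_\infty(a_j\varepsilon_1)-K'_\infty(0)a_jx\big)$ is exactly your remainder summed over lags. Its contribution to the variance is comparable to $A_N^2$, not negligible, so the effective coefficient of $\varepsilon_m$ is the \emph{random}, magnitude-dependent quantity $K'_\infty(0)\big(L(N)-L(|\varepsilon_m|)\big)$ rather than your deterministic $b_{N,k}^{(M,t)}$.

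This also shows why your extraction of $c_{L,h}$ cannot go through. For $k$ in the bulk of $[1,N]$ the true coefficient of $\varepsilon_k$ in $\sum_n X_n^{(M-)}$ is $L(N-k)-L(M)\sim L(N)$, essentially constant (the subtracted term is $L(M)$, not $L(k)$), and $H(A_N/|u b_{N,k}^{(M,t)}|)\sim H_2(N)$ does not vary with $k$ either; your Riemann sum therefore yields $A_N^{-2}\sum_k (b_{N,k}^{(M,t)})^2H(\cdot)\to t$, i.e.\ the limit variance $(\sigma_1+\sigma_2)|K'_\infty(0)|^2 t$ \emph{without} the factor $c_{L,h}<1$. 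In the paper the variable $y\in(0,\tfrac12)$ in $c_{L,h}$ indexes the magnitude of the innovation, $|\varepsilon|=N^{y}$ (the cutoff $\tfrac12$ being the truncation level $N^{1/2}H_2^{1/2}(N)$, not a symmetry between $k\le 0$ and $k\ge1$), and $(1-g_L(y))^2g_h(y)$ arises from $\E\big[(L(N)-L(|\varepsilon|))^2\varepsilon^2\,1_{\{|\varepsilon|\approx N^{y}\}}\big]$, handled by a Lindeberg--Feller CLT for the i.i.d.\ array $\tilde\eta_K(\hat\varepsilon_n)+K'_\infty(0)L(N)\hat\varepsilon_n$. Two smaller issues: your bound $\E T_{N}^2=O(N)$ is not a martingale-difference bound, since cross terms at lags $\le M$ survive; the correct estimate is of order $NL^2(M)$ (up to slowly varying factors), which is still $o(A_N^2)$ only if $M$ grows slowly. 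And in part (i) the divergence follows more directly from $H^{1/2}(a_j^{-1})\ge c$ together with $\lim_{x\to\infty}L(x)=\infty$, which is where ({\bf T2}) actually enters.
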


{\noindent\bf Region II. $\al\be=2$ and $\be\geq1$, short memory.}\\
For $\al\be>2$, the linear process has short memory, and \cite{Hsing} showed that a central limit theorem holds for $S_{[Nt]}/\sqrt{N}$ by applying a truncation technique to $X_n$.
Later, \cite{PT03} generalised the method to include the short memory linear process when $\al\be=2$ and $\be>1$, under the assumption that $K$ is bounded and has bounded second derivative.
Based on the same truncation technique and the Fourier transform, we can improve the result by removing the differentiability condition on $K$.
\begin{theorem} \label{thm3}
	For $\al\be=2$ and $\be\geq1$, assume that {\rm(\bf A1)} and {\rm(\bf A2)} hold.
	If the linear process has short memory, i.e., $\sum_{j=1}^{\infty}
	a_{j}^{\frac{\al}{2}} H^{\frac{1}{2}}(a_{j}^{-1})<\infty$, then the limit $
	\theta^{2} = \lim_{N \rightarrow \infty} N^{-1} \Var(S_{N})$ exists and is finite, and as $N\to\infty$,
	\[
	\big\{ N^{- \frac{1}{2}}S_{[Nt]}: t\geq 0\big\} \overset{\rm f.d.d.}{\longrightarrow} \{ |\theta| W_t:\; t\geq 0 \big\},
	\]
	where $(W_t)_{t\geq0}$ is the standard Brownian motion.
\end{theorem}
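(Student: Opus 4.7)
The plan is to combine the truncation scheme of \cite{Hsing,PT03} with a Fourier-inversion argument that bypasses the $C^{2}$ hypothesis on $K$ imposed in \cite{PT03}. The proof has three steps: an absolute-summability bound for the covariances $\Cov(K(X_0),K(X_k))$; an $m$-dependent approximation of $\{K(X_n)\}$ coupled with the classical CLT for $m$-dependent sequences; and control of the approximation remainder via the same covariance estimate.

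Since $K\in L^{1}\cap L^{2}$, Plancherel's identity gives, for each $k\ge 0$,
\begin{equation*}
\Cov(K(X_0),K(X_k))=\frac{1}{(2\pi)^{2}}\int_{\R^{2}}\hat K(u)\hat K(v)\bigl[\phi_{0,k}(u,v)-\phi(u)\phi(v)\bigr]\,du\,dv,
\end{equation*}
where $\phi_{0,k}$ is the joint characteristic function of $(X_0,X_k)$. By independence of $\{\eps_n\}_{n\in\Z}$, both $\phi_{0,k}$ and $\phi(u)\phi(v)$ factor as infinite products of $\phi_{\eps}$-values (with the first $k$ factors cancelling), and their difference is controlled by the telescoping bound $|\prod A_i-\prod B_i|\le\sum|A_i-B_i|$ together with the small-argument expansion \eqref{char} of $\phi_{\eps}$; for $|u|\vee|v|$ large, (A2)(ii) supplies polynomial integrability of $|\phi|$. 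Summing the resulting bound over $k$ reduces to a multiple of $\sum_{j}a_{j}^{\al}H(a_{j}^{-1})$, which is finite precisely under the short-memory hypothesis. Hence $\theta^{2}=\Var(K(X_0))+2\sum_{k\ge 1}\Cov(K(X_0),K(X_k))$ exists, and by stationarity and Ces\`aro summation equals $\lim_{N\to\infty}N^{-1}\Var(S_N)$.

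Next I would introduce the truncations $X_n^{(m)}:=\sum_{j=1}^{m}a_{j}\eps_{n-j}$ and $K_m(x):=\E[K(x+\sum_{j=m+1}^{\infty}a_{j}\eps_{1-j})]$, so that $\{K_m(X_n^{(m)})-\E K_m(X_n^{(m)})\}_n$ is strictly stationary and $m$-dependent. The multivariate CLT for $m$-dependent sequences then yields, for each fixed $m$, finite-dimensional convergence of the normalised partial sums to $\theta_m W$, with $\theta_m^{2}\to\theta^{2}$ as $m\to\infty$ by an analogous Plancherel computation. For the residual $R_n^{(m)}:=K(X_n)-K_m(X_n^{(m)})-\E[K(X_n)-K_m(X_n^{(m)})]$, the same Plancherel representation applies, now with an extra factor involving the tail products $\prod_{j>m}\phi_{\eps}(ua_{j})$-type expressions that vanish pointwise as $m\to\infty$; dominated convergence, with majorant taken from Step~1, gives $\limsup_{N}N^{-1}\Var(\sum_{n=1}^{N}R_n^{(m)})\to 0$ as $m\to\infty$. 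A standard $\eps$--$m$--$N$ argument together with the Cram\'er--Wold device then delivers the stated f.d.d.\ convergence.

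The main obstacle is Step~1: without differentiability of $K$, a direct Taylor expansion of $K(X_n)$ in $X_n$ is unavailable, so the covariance cannot be reduced to moments of $X_0$ and $X_k$. Routing everything through Fourier inversion shifts the regularity requirement onto the characteristic functions, where \eqref{char} and (A2) supply exactly the needed estimates; this is what allows us to drop the boundedness of $K''$ assumed in \cite{PT03}. The remaining ingredients are the standard $m$-dependent truncation machinery already in place for the short-memory regime $\al\be>2$.
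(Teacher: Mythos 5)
Your overall strategy coincides with the paper's: truncate the linear process to obtain an $l$-dependent stationary sequence, apply the classical CLT for $m$-dependent sequences together with a Cram\'er--Wold/asymptotic-independence argument for the increments, and route all regularity demands through the Fourier transform of $K$ so that the $C^2$ hypothesis of \cite{PT03} can be dropped. The genuine difference is in how the remainder is controlled. The paper decomposes $S_N-S_{N,l}$ into projections $\mathscr{P}^l_{n,n-j}$ onto the differences $\E[\,\cdot\,|\F_{n-j}]-\E[\,\cdot\,|\F_{n-j-1}]$, so that orthogonality across the index $n-j$ collapses the variance of the double sum, and each projection is then estimated via Plancherel, Lemma~\ref{lemb}, the von Bahr--Esseen inequality and the Potter bounds. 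You instead bound $N^{-1}\Var(\sum_n R_n^{(m)})$ by summing covariances directly and invoking dominated convergence in $k$; this is workable, but you must actually exhibit a summable majorant for $|\Cov(R_0^{(m)},R_k^{(m)})|$ that is uniform in $m$ --- your Step~1 bound is for $\Cov(K(X_0),K(X_k))$, and the four cross-covariances generated by $R^{(m)}$ each need their own telescoping estimate, which is more bookkeeping than the projection argument (though no new idea). In compensation, your route identifies $\theta^2$ as an explicit absolutely convergent covariance series, whereas the paper only obtains it as the limit of the Cauchy sequence $\theta_l^2$. One correction to Step~1: after telescoping and the Cauchy--Schwarz bound $|\phi_\eps(s+t)-\phi_\eps(s)\phi_\eps(t)|\le(1-|\phi_\eps(s)|^2)^{1/2}(1-|\phi_\eps(t)|^2)^{1/2}$, summing over $k\ge1$ yields a bound of the form $\big(\sum_{j}(|ua_j|^{\al/2}H^{1/2}(|ua_j|^{-1})\wedge1)\big)\big(\sum_{i}(|va_i|^{\al/2}H^{1/2}(|va_i|^{-1})\wedge1)\big)$, i.e.\ the square of the short-memory series, not a multiple of $\sum_j a_j^{\al}H(a_j^{-1})$; the latter condition is strictly weaker than short memory, so it is not ``finite precisely under the short-memory hypothesis.'' The conclusion is unaffected, since the corrected bound is exactly what the hypothesis $\sum_j a_j^{\al/2}H^{1/2}(a_j^{-1})<\infty$ renders finite.
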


\begin{remark}\label{rem2}
	Comparing the scaling factors $A_N$ in Theorem~\ref{thm3} and in Theorems~\ref{thm1}, \ref{thm2} and \ref{thm4}, we see that $\sqrt{N}\asymp A_N$ in the short memory region, while $\sqrt{N}=o(A_N)$ in the long memory region.
\end{remark}

{\noindent\bf Organization.}
The remaining of the paper is devoted to proofs. 
In Section~\ref{S:app}, we discuss the approximations of $S_N$ in the long and short memory regions using two distinct quantities tailored to each region.
Building on these approximations, we proceed to prove Theorems~\ref{thm1}-\ref{thm3} sequentially in Section~\ref{S:thm1}-\ref{S:thm3}.

\section{Approximations of $S_N$}\label{S:app}

The observation in Remark~\ref{rem2} that the scaling factor $A_N$ satisfies $\lim_{N\to\infty}A_N/\sqrt{N}=\infty$ (resp.\ $\limsup_{N\to\infty}A_N/\sqrt{N}<\infty$) in the long (resp.\ short) memory region has deeper underlying reasons.
These distinctions raise from two distinct approaches for approximating $S_N$, as described below.

{\noindent\bf Long memory region.}\\
In the linear process, the coefficients $a_j$ and innovations $\eps_n$ are intertwined, making it challenging to directly derive limit theorems for the partial sum $S_N$.
In the long memory region, a now standard approximation for $S_N$ (see \cite{LXX,sur02}) is given by
\begin{equation}\label{T}
	T_N = \sum_{n=1}^{N}\sum_{j=1}^{\infty} \big[K_\infty(a_j\eps_{n-j})-\E K_\infty(a_j\eps_{n-j})\big] \quad\text{with~}K_\infty(x)=\E[K(X_1+x)].
\end{equation}
Heuristically, the term $K_\infty(a_j\eps_{n-j})-\E K_\infty(a_j\eps_{n-j})$ in $T_N$ can be replaced by $K_\infty(a_j\eps_{n-j})-K_\infty(0)$, which is approximately $K'_\infty(0)a_j\eps_{n-j}$ by a first-order Taylor expansion.
This linearization simplifies the analysis of $T_N$ compared to $S_N$. The next lemma is useful when we utilize the Fourier transform.

	\begin{lemma}  \label{lemb}
		Suppose that $\varepsilon$ is in the domain of attraction of an $\alpha$-stable law with $\alpha\in (0,2]$,  $\varepsilon$ is symmetric for $\alpha=1$ and $\mathbb{E}\, \varepsilon=0$ for $\alpha\in (1,2]$, and $\phi_{\varepsilon}(\lambda)$ is the characteristic function of $\varepsilon$. Then, for any $\alpha'\in (0,\alpha)$, there exist positive constants $c_{\alpha,1}$, $c_{\alpha',1}$ and $c_{\alpha',2}$ such that
		\begin{align*}  
			1-|\phi_{\varepsilon}(\lambda)|^2=\E\big|e^{ \iota \lambda\varepsilon}-\phi_{\varepsilon}(\lambda)\big|^2
			\leq c_{\alpha,1} \big(|\lambda|^{\alpha}H(|\lambda|^{-1}) \wedge 1\big)
			\leq c_{\alpha',1}\big(|\lambda|^{\alpha'}\wedge 1\big)
		\end{align*}
		and 
		\begin{align*}  
			|1-\phi_{\varepsilon}(\lambda)|\leq c_{\alpha,1} \big(|\lambda|^{\alpha}H(|\lambda|^{-1}) \wedge 1\big)\leq c_{\alpha',2}\big(|\lambda|^{\alpha'}\wedge 1\big).
		\end{align*}
	\end{lemma}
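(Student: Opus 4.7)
The identity $\E|e^{\iota\lambda\varepsilon} - \phi_\varepsilon(\lambda)|^2 = 1 - |\phi_\varepsilon(\lambda)|^2$ is just the variance formula for a complex-valued random variable: expanding the modulus squared gives $\E|e^{\iota\lambda\varepsilon}|^2 - \phi_\varepsilon(\lambda)\overline{\phi_\varepsilon(\lambda)} - \overline{\phi_\varepsilon(\lambda)}\phi_\varepsilon(\lambda) + |\phi_\varepsilon(\lambda)|^2 = 1 - |\phi_\varepsilon(\lambda)|^2$. The plan for the nontrivial estimates is to first prove the global inequality $|1 - \phi_\varepsilon(\lambda)| \leq c_{\alpha,1}(|\lambda|^{\alpha}H(|\lambda|^{-1}) \wedge 1)$; then deduce the matching bound for $1 - |\phi_\varepsilon(\lambda)|^2$ from $1 - |\phi_\varepsilon(\lambda)|^2 \leq 2 |1 - \phi_\varepsilon(\lambda)|$, which follows by combining $1 - |\phi_\varepsilon(\lambda)| \leq |1 - \phi_\varepsilon(\lambda)|$ (reverse triangle inequality) with $1 + |\phi_\varepsilon(\lambda)| \leq 2$; and finally pass from $|\lambda|^{\alpha}H(|\lambda|^{-1}) \wedge 1$ to $|\lambda|^{\alpha'} \wedge 1$ via Potter's bound for slowly varying functions.

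For the global bound on $|1 - \phi_\varepsilon(\lambda)|$, I would split the real line into three regimes. Near zero ($|\lambda| \leq \lambda_0$): the expansion \eqref{char} gives $\ln \phi_\varepsilon(\lambda) = -(\sigma + o(1))|\lambda|^{\alpha}H(|\lambda|^{-1})(1 - \iota D\sgn \lambda)$, and combining with the elementary $|1 - e^w| \leq |w|e^{|w|}$ yields $|1 - \phi_\varepsilon(\lambda)| \leq C|\lambda|^{\alpha}H(|\lambda|^{-1})$. Since $|\lambda|^{\alpha}H(|\lambda|^{-1}) \to 0$ as $\lambda \to 0$, shrinking $\lambda_0$ if necessary makes the min equal $|\lambda|^{\alpha}H(|\lambda|^{-1})$ throughout this regime. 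Far from zero ($|\lambda| \geq \lambda_1$): Potter's bound ensures $|\lambda|^{\alpha}H(|\lambda|^{-1}) \to \infty$ as $|\lambda| \to \infty$ (the $|\lambda|^\alpha$ factor dominates any slowly varying perturbation), so the min equals $1$ and the trivial bound $|1 - \phi_\varepsilon(\lambda)| \leq 2$ closes the gap. On the compact intermediate range $\lambda_0 \leq |\lambda| \leq \lambda_1$, local boundedness of $H$ and continuity of $\phi_\varepsilon$ make both sides bounded continuous quantities with the min bounded below by a positive constant, so a single multiplicative constant suffices.

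For the final refinement, Potter's bound supplies $H(y) \leq C_{\alpha'} y^{\alpha - \alpha'}$ for $y \geq 1$ for any fixed $\alpha' \in (0,\alpha)$, which gives $|\lambda|^{\alpha}H(|\lambda|^{-1}) \leq C_{\alpha'}|\lambda|^{\alpha'}$ when $|\lambda| \leq 1$; for $|\lambda| \geq 1$ we simply use $(|\lambda|^{\alpha}H(|\lambda|^{-1})) \wedge 1 \leq 1 = |\lambda|^{\alpha'} \wedge 1$. The main obstacle is more bookkeeping than conceptual: stitching the small- and large-$|\lambda|$ estimates together uniformly across the compact intermediate range, and choosing a representative of $H$ in its asymptotic equivalence class that is locally bounded away from $0$ and $\infty$. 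The substantive content is entirely driven by the expansion \eqref{char} and Potter's bound.
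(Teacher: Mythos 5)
Your argument is correct, but it is worth noting that the paper does not actually write out a proof: it disposes of the lemma in one line by citing the expansion \eqref{char}, \cite[Theorem 2.6.4]{IL} and \cite[Lemma 7.3]{SSX}. What you have done is reconstruct, self-containedly, essentially the content of that cited lemma: the variance identity, the reduction $1-|\phi_\varepsilon|^2\leq 2|1-\phi_\varepsilon|$, the three-regime bound driven by \eqref{char} near the origin and the trivial bound $|1-\phi_\varepsilon|\leq 2$ away from it, and the passage to $|\lambda|^{\alpha'}\wedge 1$ via Potter's theorem. This buys a reader a proof that does not require chasing references, at the cost of some bookkeeping the paper avoids. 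One soft spot to tighten: for $|\lambda|\to\infty$ the quantity $H(|\lambda|^{-1})$ evaluates the slowly varying function near $0$, where slow variation at infinity says nothing and Potter's bound does not apply; what you actually need there is only that the right-hand side $c_{\alpha,1}\big(|\lambda|^{\alpha}H(|\lambda|^{-1})\wedge 1\big)$ is bounded below by a positive constant for $|\lambda|\geq\lambda_1$, which follows once you fix (as you indicate) a representative of $H$ bounded away from $0$ on $(0,\lambda_1^{-1}]$ — so phrase that step as a normalization of $H$ near the origin rather than as a consequence of Potter's bound. With that adjustment the proof is complete; the remaining discrepancies (e.g.\ using a single constant $c_{\alpha,1}$ for both displayed inequalities) are resolved by taking maxima of constants.
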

	
	\begin{proof} This follows from (\ref{char}), \cite[Theorem 2.6.4]{IL} and \cite[Lemma 7.3]{SSX}.
	\end{proof}

\noindent Based on Lemma \ref{lemb} and using the same arguments of \cite[Proposition 3.1]{LXX}, we have the following bounds in two scenarios, which ensures that $\E|S_{N}-T_{N}|^2=o(A_N^2)$.
\begin{proposition} \label{prop31} Suppose that assumptions $({\bf A1})$ and $({\bf A2})$ hold. 
	$($i$)$ If $1<\al\leq 3/2$ and $\beta=1$, then for any $\al'\in(1,\al)$, there exists a constant $c_{\al'}$ such that $
		\E|S_{N}-T_{N}|^2\leq c_{\al'}N^{4-2\al'}$.
 
	\noindent$($ii$)$ If $\al\be> 3/2$, then there exists a constant $c_{2}$ such that $
		\E|S_{N}-T_{N}|^2\leq c_{2}N$.
\end{proposition}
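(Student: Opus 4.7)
The plan is to adapt the Fourier-analytic second-order expansion from \cite[Proposition~3.1]{LXX}, modifying only the final moment bookkeeping to match the two regimes. Set
\[
V_n:=K(X_n)-\E K(X_n)-\sum_{j\geq 1}\bigl[K_\infty(a_j\varepsilon_{n-j})-\E K_\infty(a_j\varepsilon_{n-j})\bigr],
\]
so $S_N-T_N=\sum_{n=1}^N V_n$, and I must bound $\sum_{n,m=1}^N \E[V_n V_m]$. Since $K_\infty=K*f$ where $f$ is the density of $X_1$, its Fourier transform is $\hat K\cdot\overline{\phi}$, and Fourier inversion applied to both $K(X_n)$ and $K_\infty(a_j\varepsilon_{n-j})$ gives
\[
V_n=\int_\R \hat K(u)R_n(u)\frac{du}{2\pi},\qquad R_n(u):=\Bigl[\prod_{j\geq 1}e^{\iota u a_j\varepsilon_{n-j}}-\phi(u)\Bigr]-\phi(u)\sum_{j\geq 1}\bigl[e^{\iota u a_j\varepsilon_{n-j}}-\phi_\varepsilon(a_j u)\bigr].
\]
Convergence of all integrals is ensured by ({\bf A1}) (giving $\hat K\in L^1\cap L^\infty$) together with ({\bf A2}) and point (ii) of the excerpt (which makes $\phi$ decay faster than any polynomial).

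Next, I would apply the telescoping identity $\prod_j A_j-\prod_j B_j=\sum_j(\prod_{k<j}A_k)(A_j-B_j)(\prod_{k>j}B_k)$ with $A_j=e^{\iota u a_j\varepsilon_{n-j}}$ and $B_j=\phi_\varepsilon(a_j u)$. The linear part of this telescoping nearly cancels the subtracted sum in $R_n(u)$, exhibiting $R_n(u)$ as an explicit sum of second-order interactions, each carrying at least two centered factors $\Delta_{n,j}(u):=e^{\iota u a_j\varepsilon_{n-j}}-\phi_\varepsilon(a_j u)$. Writing $\E[V_n V_m]$ as a double Fourier integral, independence of the $\varepsilon_i$'s annihilates every term whose noise indices do not pair up, and the surviving terms factor through quantities of the form $1-|\phi_\varepsilon(a_j u)|^2$ or $(1-\phi_\varepsilon(a_j u))(1-\phi_\varepsilon(-a_j v))$. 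By Lemma~\ref{lemb} each such factor is $\leq c(|a_j u|^{\alpha'}\wedge 1)$ for any $\alpha'\in(1,\alpha)$; combined with the fast decay of $\hat K\phi$ and $\hat K\overline\phi$, this reduces $|\E V_nV_m|$ to a sum weighted by $a_{j_1}^{\alpha'}a_{j_2}^{\alpha'}$, subject to the index-pairing constraint $\{n-j_1,n-j_2\}=\{m-k_1,m-k_2\}$.

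The final step is to sum over $n,m\leq N$ and apply Karamata's theorem to the regularly varying weights $a_j=j^{-\beta}\ell(j)$. In case (i) ($\beta=1$, $1<\alpha\leq 3/2$), picking $\alpha'$ just below $\alpha$, the partial sum $\sum_{j\leq N}a_j^{\alpha'}$ is of order $N^{1-\alpha'}$ up to a slowly varying factor, and organising the double index sum over the shift $|n-m|$ produces the bound $N^{4-2\alpha'}$. In case (ii) ($\alpha\beta>3/2$), one picks $\alpha'$ close enough to $\alpha$ so that $\sum_j a_j^{2\alpha'}<\infty$; the diagonal $n=m$ then dominates and the factor of $N$ comes from its count, giving the $O(N)$ bound. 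I expect the main obstacle to be the bookkeeping in the second paragraph: tracking which quadratic combinations of $\Delta_{n,j}$'s survive the expectation after the telescoping, and arranging the index constraints so that Lemma~\ref{lemb} applies uniformly in $u,v$. Once this is in place, the regularly varying sums are routine.
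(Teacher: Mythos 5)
Your overall strategy is exactly the one the paper intends: the paper gives no self-contained proof but simply invokes Lemma~\ref{lemb} together with ``the same arguments of [LXX, Proposition~3.1]'', and your Fourier decomposition of $V_n$, the telescoping of the product of $e^{\iota u a_j\varepsilon_{n-j}}$'s against the product of $\phi_\varepsilon(a_ju)$'s, the observation that only index-paired terms survive the expectation, and the reduction to factors controlled by Lemma~\ref{lemb} are all faithful to that argument. The gap is in the final bookkeeping, which is the only place where the two regimes and the threshold $3/2$ are actually decided, and there your quantitative claims are wrong or missing. First, for $\beta=1$ and $\alpha'>1$ the partial sum $\sum_{j\leq N}a_j^{\alpha'}=\sum_{j\leq N}j^{-\alpha'}\ell^{\alpha'}(j)$ \emph{converges}; it is the tail $\sum_{j>N}a_j^{\alpha'}$ that is of order $N^{1-\alpha'}$ up to a slowly varying factor, and it is this tail (coming from the ``at least one more centered factor with index $k>j$'' in the telescoped remainder) that enters the estimate. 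Second, you never produce the intermediate covariance bound that the sum over $n,m$ actually requires, namely $|\E[V_nV_m]|\leq c\,(1\vee|n-m|)^{2-2\alpha'\beta'}$ (with $\beta'=\beta=1$ in case (i)). This is what yields $\sum_{n,m\leq N}|n-m|^{2-2\alpha'}\leq cN^{4-2\alpha'}$, and it is also the sole reason for the hypothesis $\alpha\leq 3/2$ in (i) (one needs $2-2\alpha'>-1$ for the off-diagonal sum to be the dominant $N^{4-2\alpha'}$ rather than $O(N)$) and for the threshold $\alpha\beta>3/2$ in (ii) (one needs $2-2\alpha'\beta'<-1$ so that the shifts are summable and the diagonal dominates).

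Relatedly, your stated criterion for case (ii), ``pick $\alpha'$ so that $\sum_j a_j^{2\alpha'}<\infty$'', cannot be the right condition: it holds whenever $2\alpha'\beta>1$, i.e.\ for the entire range $\alpha\beta>1$ in which the linear process exists, so it does not distinguish $\alpha\beta>3/2$ from $\alpha\beta\leq 3/2$ and does not justify ``the diagonal dominates''. To repair the argument you should make explicit the per-term estimate $\|U_{n,j}\|_2\leq c\,|a_j|^{\alpha'/2}\big(\sum_{k>j}|a_k|^{\alpha'}\big)^{1/2}\leq c\,j^{(1-2\alpha'\beta')/2}$ (up to slowly varying corrections), where $U_{n,j}$ is the $j$-th martingale-difference block of $V_n$; then orthogonality forces $n-j=m-i$ and Cauchy--Schwarz gives the covariance bound above, from which both exponents follow by summing a regularly varying sequence over the shift $d=|n-m|$. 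One further small caution: in your telescoping you implicitly divide by $\phi_\varepsilon(a_ju)$ to compare the linear part with $\phi(u)\sum_j\Delta_{n,j}(u)$; since $\phi_\varepsilon$ may vanish, you should instead keep the undivided form $\prod_{k\neq j}\phi_\varepsilon(a_ku)-\prod_{k}\phi_\varepsilon(a_ku)$ and absorb the discrepancy as yet another factor of $1-\phi_\varepsilon(a_ju)$, which Lemma~\ref{lemb} again controls.
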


The weak convergences of $A_N^{-1}S_N$ and $A_N^{-1}T_N$ are thus equivalent.
We note that a more common form of $T_N$ in (\ref{T}) is as below, which separates the coefficients $a_j$ and innovations $\eps_n$,
\begin{equation}\label{T2}
	T_N=\Big(\sum_{n=1}^{N}\sum_{j=1}^{N}+ \sum_{n=-\infty}^{0}\sum_{j=1-n}^{N-n} - \sum_{n=1}^{N}\sum_{j=N-n+1}^{N} \Big)\big[K_\infty(a_j\eps_{n})-\E K_\infty(a_j\eps_{n})\big].
\end{equation}
Under the assumptions of Theorems~\ref{thm1}, \ref{thm2} or \ref{thm4}, we will see later that the primary contribution arises from the first double sum on the right-hand side of  (\ref{T2}).

{\noindent\bf Short memory region.}\\
Recall that $A_N\asymp\sqrt{N}$ (see Theorem~\ref{thm3}) in the short memory region.
Consequently, Proposition~\ref{prop31} does not provide a good $L_2$ approximation for $S_N$.
Instead, we approximate $X_n$s by truncation, making them locally dependent.
More precisely, as in \cite{Hsing,PT03}, for each $l\in\N$, we define $X_{n,l}:=\sum_{j=1}^{l}a_j\eps_{n-j}$ and  partial sums
\begin{equation}\label{Tl}
	S_{N,l}:=\sum_{n=1}^{N}\big[K(X_{n,l})-\E K(X_{n,l})\big].
\end{equation}

The proof strategy of \cite{Hsing,PT03} also works under the conditions of Theorem~\ref{thm3}.
Indeed, since $S_{N,l}$ is a sum of locally dependent and identically distributed random variables, it follows that as $N\to\infty$, $S_{N,l}/\sqrt{N}$ converges to a centered normal random variable with variance $\theta^2_l$.
Moreover, the central limit theorem for the rescaled sum $S_N/\sqrt{N}$ remains valid, if we show that as $l\to\infty$, (i) the $L_2$ difference $\E|S_N-S_{N,l}|^2$ is asymptotically $o(N)$, and (ii) the variance $\theta^2_l$ converges to a finite limit $\theta^2\in[0,\infty)$.
These points will be discussed in detail and established in Section~\ref{S:thm3}.

\section{Proof of Theorem \ref{thm1}} \label{S:thm1}

We start with the following lemma that compares the slowly varying functions related to $\ell$ and $L$.
\begin{lemma}  \label{lemslow} Recall $A_N$ from \eqref{sca1}. For any $\alpha\in (1,2)$, 
\[
\lim_{N\to\infty}\frac{\ell(A_N)}{L(N)-L(A_N)}=\lim_{N\to\infty}\frac{\ell(N)}{L(N)-L(A_N)}=0.
\]
\end{lemma}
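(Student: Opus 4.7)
The plan is to reduce both ratios to the single asymptotic equivalence $L(N)-L(A_N)\sim L(N)$, after which Karamata's standard fact $\ell(x)/L(x)\to 0$ finishes things off. Since $L$ is continuous (by the linear-interpolation convention for non-integers), the defining infimum in \eqref{sca1} is attained and the intermediate value theorem yields the fixed-point identity
\[
	A_N = R_{\al,H_\al}(y_N),\qquad y_N := L(N)-L(A_N)\in\big(0,L(N)\big].
\]
The theorem's set-up implicitly requires $L(N)\to\infty$ (otherwise the scaling $A_N$ is bounded and no nontrivial limit can exist). Granting this, a quick contradiction shows $A_N\to\infty$: if $A_N$ stayed bounded on a subsequence, $L(A_N)$ would stay bounded, so $y_N\to\infty$ there, and then $A_N=R_{\al,H_\al}(y_N)\to\infty$ by $(1/\al)$-regular variation --- a contradiction.

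The main estimate I would establish is $L(A_N)=o(L(N))$. Because $y_N\le L(N)$ and $R_{\al,H_\al}$ is $(1/\al)$-regularly varying, Potter's bound gives, for any $\eps>0$ and all $N$ large enough,
\[
	A_N \;=\; R_{\al,H_\al}(y_N) \;\le\; C_\eps\,L(N)^{1/\al+\eps}.
\]
Applying Potter's bound a second time, now to the slowly varying function $L$ itself ($L(x)\le C_\de x^\de$ for any $\de>0$ and large $x$), we obtain
\[
	L(A_N)\;\le\; C_\de\,A_N^\de \;\le\; C_\de'\,L(N)^{\de(1/\al+\eps)}.
\]
Choosing $\de,\eps>0$ with $\de(1/\al+\eps)<1$ and invoking $L(N)\to\infty$ yields $L(A_N)/L(N)\to 0$, whence $y_N \sim L(N)$.

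With $y_N\sim L(N)$ in hand, Karamata's theorem (\cite[Theorem~1.6.5]{BGT89}), which gives $\ell(x)=o(L(x))$, immediately closes both asymptotics:
\[
	\frac{\ell(N)}{y_N}\sim \frac{\ell(N)}{L(N)}\to 0,\qquad \frac{\ell(A_N)}{y_N} = \frac{\ell(A_N)}{L(A_N)}\cdot\frac{L(A_N)}{L(N)}\cdot\frac{L(N)}{y_N}\to 0\cdot 0\cdot 1 = 0.
\]
The only substantive step, and the main obstacle, is the nested-Potter estimate: one needs both that $A_N$ grows no faster than a slowly varying power of $N$ (bounded by $L(N)^{1/\al+\eps}$), and that $L$ crushes any such power down to $o(L(N))$. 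The intuition is that $A_N = N^{o(1)}$ and so $L(A_N)=L(N^{o(1)})\ll L(N)$, but executing this cleanly requires the two quantitative Potter estimates above.
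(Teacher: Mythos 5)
There is a genuine gap, and it starts at your very first identity. In \eqref{sca1} the symbol $N_{\al,H_\al}$ is the \emph{number} $N^{1/\alpha}H_\alpha^{1/\alpha}(N)$ (see \eqref{Nsim}), and $N_{\al,H_\al}\big(L(N)-L(x)\big)$ is a product, not the function $R_{\al,H_\al}$ evaluated at $L(N)-L(x)$; this is confirmed by Remark~\ref{rem1}(1) and by the relation $A_N\asymp N_{\alpha,H_{\alpha}}L(N)$ in Remark~\ref{rem1}(2). The correct fixed-point relation is therefore $A_N\approx N^{1/\alpha}H_\alpha^{1/\alpha}(N)\,y_N$ with $y_N=L(N)-L(A_N)\le L(N)$, so $A_N$ is of order $N^{1/\alpha}$ up to slowly varying factors --- not of order $L(N)^{1/\alpha}$. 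Your nested-Potter step then only yields $L(A_N)\le C\,N^{\delta(1/\alpha+\eps)}$, which says nothing about $L(A_N)/L(N)$. Worse, the intermediate claim you reduce everything to, $y_N\sim L(N)$ (equivalently $L(A_N)=o(L(N))$), is genuinely false for admissible $\ell$: taking $\ell(x)=1/\ln x$, so $L(N)\sim\ln\ln N$, one gets $L(A_N)=\ln\ln A_N=\ln\ln N-\ln\alpha+o(1)$ and hence $y_N\to\ln\alpha$, a bounded constant, while $L(N)\to\infty$. This is exactly why Remark~\ref{rem1}(2) asserts $L(N)-L(A_N)\sim L(N)$ only for slowly varying functions of types (ii) and (iii), excluding $\ln\ln x$. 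Since the lemma is still true in that excluded case, the route through $y_N\sim L(N)$ cannot be repaired. (A secondary issue: your premise $L(N)\to\infty$ is not guaranteed either, e.g.\ $\ell(x)=(\ln x)^{-2}$ makes $L$ bounded, yet the lemma still holds.)

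The paper's proof sidesteps all of this: it lower-bounds $\big(L(N)-L(A_N)\big)/\ell(A_N)$ by the integral $\int_{A_N+1}^{N}x^{-1}\ell(x)\,\ell(A_N)^{-1}dx$, substitutes $x=A_Ny$ (resp.\ $x=Ny$ for the second ratio), and uses Fatou together with the uniform convergence theorem for slowly varying functions to get a lower bound $\ln(M/2)$ for every $M$; the only structural inputs are $A_N\to\infty$ and $A_N=o(N)$, which follow from the (correct) product form of \eqref{sca1} and $\alpha>1$. That last observation is essentially the only piece of your argument that survives.
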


\begin{proof}
	Observe that 
	\begin{align*}
		\liminf_{N\to\infty}\frac{L(N)-L(A_N)}{\ell(A_N)}
		&\geq\liminf_{N\to\infty}\int^N_{A_N+1}\frac{x^{-1}\ell(x)}{\ell(A_N)}dx\\
&\geq \int^{\infty}_1 \liminf_{N\to\infty} 1_{(1+A^{-1}_N,NA^{-1}_N)}(y)\frac{(A_N y)^{-1}\ell(A_Ny)}{A^{-1}_N\ell(A_N)}dy\geq \int^{M}_2 y^{-1}dy=\ln (M/2)
	\end{align*}
	for all $M\geq 2$. Letting $M\uparrow\infty$ gives $\lim_{N\to\infty}\frac{\ell(A_N)}{L(N)-L(A_N)}=0$. 
Similarly,
\begin{align*}
		\liminf_{N\to\infty}\frac{L(N)-L(A_N)}{\ell(N)}
&\geq \int^{1}_0 \liminf_{N\to\infty} 1_{(N^{-1},1)}(y)\frac{(N y)^{-1}\ell(Ny)}{N^{-1}\ell(N)} dy\geq \int^{1}_{1/M}y^{-1}dy=\ln M	
\end{align*}
	for all $M\geq 2$. Letting $M\uparrow\infty$ gives $\lim_{N\to\infty}\frac{\ell(N)}{L(N)-L(A_N)}=0$.
\end{proof} 

Recall from (\ref{T2}) that the approximation $T_N$ of $S_N$ can be rewritten as
\begin{align}\label{T3}
T_N=\text{I}_{N,1}+\text{I}_{N,2}-\text{I}_{N,3},
\end{align}
where we denote
\begin{align*}
\text{I}_{N,1}:=\sum^N_{n=1}\sum^{N}_{j=1} \xi_{n,j},\quad 
\text{I}_{N,2}:=\sum^0_{n=-\infty}\sum^{N-n}_{j=1-n} \xi_{n,j},\quad
\text{I}_{N,3}:=\sum^N_{n=1}\sum^{N}_{j=N-n+1} \xi_{n,j}
\end{align*}
with $\xi_{n,j}:=K_{\infty}(a_j\varepsilon_n)-\E K_{\infty}(a_j\varepsilon_n)$.
To show Theorem~\ref{thm1}, we will show in Proposition~\ref{prop3} that $A_N^{-1}\text{I}_{N,1}$ converges to a nontrivial limit, and show in Propositions~\ref{prop32}~and~\ref{prop34} that the other two terms are negligible after scaling. Let $\overset{\mathcal{L}}{\rightarrow}$ denote the convergence in law.

\begin{proposition} \label{prop3} 
	Under the assumptions of Theorem \ref{thm1}, as $N\to \infty$,
	\[
	A^{-1}_{N} \mathrm{I}_{N,1} \overset{\mathcal{L}}{\rightarrow} K'_\infty(0) Z^{\alpha}_{1},
	\]
	where $Z^{\alpha}_{1}$ is $\alpha$-stable with characteristic function given in \eqref{stablep}.
\end{proposition}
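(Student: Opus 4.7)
The plan is to compute the characteristic function of $A_N^{-1}\mathrm{I}_{N,1}$ by exploiting the independence of the innovations. Since $Y^{(n)}:=\sum_{j=1}^{N}\xi_{n,j}$ depends only on $\varepsilon_n$, the random variables $\{Y^{(n)}\}_{n=1}^{N}$ are iid, and writing $Y:=Y^{(1)}$ we have
\[
\E\bigl[e^{\iota u A_N^{-1}\mathrm{I}_{N,1}}\bigr]=\bigl(\E\bigl[e^{\iota u Y/A_N}\bigr]\bigr)^{N}.
\]
The proposition therefore reduces to showing
\[
N\bigl(\E\bigl[e^{\iota u Y/A_N}\bigr]-1\bigr)\;\longrightarrow\;-\sigma|K'_\infty(0)u|^{\alpha}\bigl(1-\iota D\sgn(K'_\infty(0)u)\bigr),
\]
which is the log-characteristic exponent of $K'_\infty(0)Z^{\alpha}_1$ given by \eqref{stablep}.

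By (\textbf{A2}) and the smoothness of the density $f$ noted after (\textbf{A2}), $K_\infty$ is $C^{\infty}$ with bounded derivatives; Taylor's theorem gives $K_\infty(x)-K_\infty(0)=K'_\infty(0)x+r(x)$ with $|r(x)|\leq C(x^{2}\wedge 1)$. Using $\E\varepsilon_1=0$, I would decompose
\[
Y=K'_\infty(0)L(N)\varepsilon_1+R_N,\qquad R_N:=\sum_{j=1}^{N}\bigl[r(a_j\varepsilon_1)-\E r(a_j\varepsilon_1)\bigr],\qquad \E R_N=0.
\]
The characteristic function of the main piece equals $\phi_\varepsilon(v_N)$ with $v_N:=uK'_\infty(0)L(N)/A_N\to 0$. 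Applying the expansion \eqref{char} and combining it with the defining relation \eqref{halpha} of $H_\alpha$, the characterization \eqref{sca1} of $A_N$, and Lemma~\ref{lemslow}, one checks that $N(\phi_\varepsilon(v_N)-1)$ converges to the exponent above, so that the main piece alone gives the claimed $\alpha$-stable limit.

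The main obstacle is to show that the remainder $R_N$ does not affect the limit, i.e.\ that
\[
\bigl|\E e^{\iota u Y/A_N}-\E e^{\iota u K'_\infty(0)L(N)\varepsilon_1/A_N}\bigr|=o(N^{-1}).
\]
Because $R_N$ is a deterministic function of the single variable $\varepsilon_1$ and $\E\varepsilon_1^{2}=\infty$ for $\alpha\in(1,2)$, a naive Cauchy--Schwarz bound via $\E R_N^{2}$ is too crude. I would instead split according to whether $|\varepsilon_1|$ lies below or above the $\alpha$-stable scale $A_N/L(N)$ (where $\P(|\varepsilon_1|>A_N/L(N))\sim N^{-1}$ by \eqref{h}). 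On the bulk set the bound $|r(a_j y)|\leq Ca_j^{2}y^{2}$ applies to indices with $a_j|y|\leq 1$ (controlled through $\sum_j a_j^{\alpha}h(a_j^{-1})<\infty$, which holds since $\alpha\beta=\alpha>1$), while the remaining ``boundary'' indices of cardinality $O(|y|\ell(|y|))$ must be estimated via the crude bound $|r|\leq C$, producing after integration against $F_\varepsilon$ precisely the $L(N)-L(A_N)$ correction appearing in \eqref{sca1}. On the tail set, $|e^{\iota u Y/A_N}-e^{\iota u K'_\infty(0)L(N)\varepsilon_1/A_N}|\leq 2$ yields an $O(N^{-1})$ contribution whose leading constant, via Lemma~\ref{lemslow}, merges cleanly with the main term. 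This delicate matching between bulk Taylor approximation and tail contribution is what necessitates the refined form \eqref{sca1} of $A_N$ in place of the simpler $L(N)N_{\alpha,H_\alpha}$, and it constitutes the principal technical difficulty of the proposition.
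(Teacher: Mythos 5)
Your reduction to the i.i.d.\ row sums $Y^{(n)}=\sum_{j=1}^{N}\xi_{n,j}$ is sound, but the decomposition $Y=K'_\infty(0)L(N)\varepsilon_1+R_N$ with ``$R_N$ negligible'' does not work, and the error traces back to the claimed bound $|r(x)|\le C(x^{2}\wedge 1)$. Since $K_\infty$ is bounded, $K_\infty(x)-K_\infty(0)$ is bounded while $K'_\infty(0)x$ is not, so $r(x)\sim -K'_\infty(0)x$ as $|x|\to\infty$: the Taylor remainder grows linearly, not boundedly. Consequently, on the event $|\varepsilon_1|\asymp N_{\al,H_\al}$ that produces the stable limit, the indices $j$ with $a_j|\varepsilon_1|\gtrsim 1$ satisfy $r(a_j\varepsilon_1)\approx -K'_\infty(0)a_j\varepsilon_1$, so $R_N$ cancels a portion of order $K'_\infty(0)L(|\varepsilon_1|)\varepsilon_1$ of your main piece instead of being negligible. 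The true main term is $K'_\infty(0)\bigl(L(N)-L(A_N)\bigr)\varepsilon_1$, which is exactly why $A_N$ is defined by the fixed-point relation \eqref{sca1}: $A_N^{-1}\bigl(L(N)-L(A_N)\bigr)=N_{\al,H_\al}^{-1}$ is the correct stable normalization for $\sum_n\varepsilon_n$. Already for $\ell\equiv 1$ (so $L(N)\sim\ln N$ and $L(A_N)\sim\frac{1}{\alpha}\ln N$) one has $L(N)/(L(N)-L(A_N))\to\alpha/(\alpha-1)\neq 1$, so $N\bigl(\phi_\varepsilon(uK'_\infty(0)L(N)/A_N)-1\bigr)$ converges to the wrong constant; for $\ell(j)=1/\ln j$, where $L(N)-L(A_N)$ stays bounded while $L(N)\to\infty$, your main piece is not even tight. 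Your final paragraph does gesture at the correct phenomenon (the boundary indices generate the $L(N)-L(A_N)$ correction), but that contradicts the stated plan that the main piece alone yields the limit and that $\bigl|\E e^{\iota uY/A_N}-\E e^{\iota uK'_\infty(0)L(N)\varepsilon_1/A_N}\bigr|=o(N^{-1})$; as structured the argument cannot close.

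For comparison, the paper avoids this by never linearizing over small $j$: it splits the $j$-sum at $[N^{\delta}]$ and $[A_N]$, shows the ranges $j\le N^{\delta}$ and $N^{\delta}<j\le A_N$ are $o(A_N^2)$ in $L^2$ via Plancherel and Lemma~\ref{lemslow}, applies the Taylor expansion only for $j>A_N$ (where the remainder is controlled in $L^1$ after truncating $\varepsilon_n$ at $N_{\al,H_\al}$), and reads off the limit from $K'_\infty(0)\bigl(L(N)-L([A_N])\bigr)\sum_{n}\varepsilon_n$ directly via \eqref{char}.
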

\begin{proof}
	The idea is to approximate $\xi_{n,j}$ by the linear term $K'_\infty(0)a_j\eps_n$ such that $\mathrm{I}_{N,1}$ is approximately a sum of i.i.d.\ random variables.
	However, a key observation is that the linear approximation is good only for $j\geq A_N$.
	Thereby, we rewrite
\[
\text{I}_{N,1}=\text{I}_{N,1,1}+\text{I}_{N,1,2}+\text{I}_{N,1,3} :=\sum^N_{n=1}\sum^{[N^{\delta}]}_{j=1} \xi_{n,j} +\sum^N_{n=1}\sum^{[A_N]}_{j=[N^{\delta}]+1} \xi_{n,j} +\sum^N_{n=1}\sum^{N}_{j=[A_N]+1} \xi_{n,j},
\]
where $\delta\in (0,\frac{2-\alpha}{2\alpha})$ is a constant.
We then proceed the linear approximation for the last term,
and show that the second moments of the first two terms are $o(A^2_N)$.

By assumptions ({\bf A1})-({\bf A2}), $K_{\infty}$ is bounded, and thus $\mathbb{E}|\text{I}_{N,1,1}|^2$ is bounded from above by a constant multiple of $N^{1+2\delta}$. 
This leads to $\lim_{N\to\infty} A_N^{-2}\hspace{1pt}\E|\text{I}_{N,1,1}|^2=0$.

For the term $\text{I}_{N,1,2}$, the Plancherel formula (e.g., see \cite{LXX}) yields that
\begin{align*}
\mathbb{E}|
\text{I}_{N,1,2}|^2
&=\frac{N}{4\pi^2}\int_{\mathbb{R}^2} \widehat{K}(u) \phi(-u)\widehat{K}(-v) \phi(v)\\
&\qquad\times \sum^{[A_N]}_{j_1,j_2=[N^{\delta}]+1}\mathbb{E}\Big[(e^{-\iota u a_{j_1} \varepsilon_{n}}-\phi_{\varepsilon}(-a_{j_1} u))(e^{\iota v a_{j_2} \varepsilon_{n}}-\phi_{\varepsilon}(a_{j_2} v))\Big]\, du\, dv\\
&\leq  N\Big(\sum^{[A_N]}_{j=[N^{\delta}]+1}\int_{\mathbb{R}} |\widehat{K}(u)||\phi(-u)|(1-|\phi_{\varepsilon}(-a_{j} u)|^2)^{\frac{1}{2}} du\Big)^2,
\end{align*}
where $\phi$ and $\phi_\eps$ are  characteristic functions of $X_1$ and $\eps_1$, respectively. Recalling (\ref{char}),
\begin{align*}
&\sum^{[A_N]}_{j=[N^{\delta}]+1}\int_{\mathbb{R}} |\widehat{K}(u)||\phi(-u)|(1-|\phi_{\varepsilon}(-a_{j} u)|^2)^{\frac{1}{2}} du\\
&\leq c_1 \sum^{[A_N]}_{j=[N^{\delta}]+1}\int_{|u|\leq N^{\frac{\delta}{2}}} |\widehat{K}(u)||\phi(-u)||a_{j} u|^{\frac{\alpha}{2}}\frac{H^{\frac{1}{2}}(|a_{j} u|^{-1})}{H^{\frac{1}{2}}(|a_{j} |^{-1})}H^{\frac{1}{2}}(|a_{j}|^{-1})\, du\\ 
&\qquad\qquad+c_1 A_N \int_{|u|>N^{\frac{\delta}{2}}} |\widehat{K}(u)||\phi(-u)|\, du\\
&\leq c_2\sum^{[A_N]}_{j=[N^{\delta}]+1}|a_{j} |^{\frac{\alpha}{2}}H^{\frac{1}{2}}(|a_{j}|^{-1})
\leq c_3 A_N^{1-\frac{\alpha}{2}}\ell^{\frac{\alpha}{2}}(A_N)H^{\frac{1}{2}}(A_N\ell^{-1}(A_N)),
\end{align*}
where in the second inequality we used the Potter bounds \cite[Theorem 1.5.6]{BGT89} that for any $\delta'>0$, there exists $c_{\delta'}$ such that $H(|a_ju|^{-1})/H(|a_j|^{-1})\leq c_{\delta'}|u|^{\delta'}$ for $|u|\leq N^{\delta'/2}$, and used the fact that $\phi$ decays faster than any polynomial as $|u|\to\infty$ so that the integrals are finite. 

Hence, by the above inequalities, the Potter bounds \cite[Theorem 1.5.6]{BGT89} and Remark \ref{rem1} (1),
\begin{align*}
\mathbb{E}|
\text{I}_{N,1,2}|^2
&\leq c_4 N A_N^{2-\alpha}\ell^{\alpha}(A_N)H(A_N\ell^{-1}(A_N))\\[5pt]
&=c_4 A^2_N \Big(\frac{\ell(A_N)}{L(N)-L(A_N)}\Big)^{\alpha}\frac{H(A_N\ell^{-1}(A_N))}{H(N_{\al, H_{\al}})}\frac{H(N_{\al, H_{\al}})}{H_{\alpha}(N)}\leq c_5 A^2_N \Big(\frac{\ell(A_N)}{L(N)-L(A_N)}\Big)^{\frac{\alpha}{2}}.
\end{align*}
Lemma~\ref{lemslow} then implies $\lim_{N\to\infty} A_N^{-2}\hspace{1pt}\E|\text{I}_{N,1,2}|^2=0$.

For $\text{I}_{N,1,3}$, we split it into the linear part $\text{I}_{N,1,3,2}$ and the remainder $\text{I}_{N,1,3,1}$,
\[
\text{I}_{N,1,3}=\text{I}_{N,1,3,1}+\text{I}_{N,1,3,2}:=\sum^N_{n=1}\sum^{N}_{j=[A_N]+1}\zeta_{n,j} + \sum^N_{n=1}\sum^{N}_{j=[A_N]+1} K'_{\infty}(0) a_j\varepsilon_n,
\]
where 
\begin{align}\label{zeta}
	\zeta_{n,j}:=K_{\infty}(a_j\varepsilon_n)-\E K_{\infty}(a_j\varepsilon_n)-K'_{\infty}(0)a_j\varepsilon_n.
\end{align}
By the Taylor expansion of $K_\infty$ and by the Plancherel formula, it is not hard to see (e.g., as in \cite[Section~3.2]{LXX}, or similar argument as for (\ref{Ksum}) below) that for $\alpha'\in(1,\alpha)$,
\begin{align*}
\mathbb{E}|\text{I}_{N,1,3,1}|
&\leq c_6 N\sum^N_{j=[A_N]+1} |a_j|^2 \mathbb{E}\big[|\varepsilon_n|^2 1_{\{|\varepsilon_n|\leq N_{\al, H_{\al}}\}}\big]
+c_6 N\sum^N_{j=[A_N]+1} |a_j|^{\alpha'} \mathbb{E}\big[|\varepsilon_n|^{\alpha'} 1_{\{|\varepsilon_n|>N_{\al, H_{\al}}\}}\big]  \\[5pt]
&\leq c_7 A^{-1}_N\ell^2(A_N) N  (N_{\al, H_{\al}})^{2-\alpha}H(N_{\al, H_{\al}}) 
+c_7 A_N^{1-\alpha'}\ell^{\alpha'}(A_N)N(N_{\al, H_{\al}})^{\alpha'-\alpha}H(N_{\al, H_{\al}})\\[5pt]
&\leq c_8 A_N\Big[\big(\frac{\ell(A_N)}{L(N)-L(A_N)}\big)^{2}+\big(\frac{\ell(A_N)}{L(N)-L(A_N)}\big)^{\alpha'}\Big].
\end{align*}
Thanks to Lemma~\ref{lemslow}, $\lim_{N\to\infty} A_N^{-1}\mathbb{E}|\text{I}_{N,1,3,1}|=0$. Since $
\text{I}_{N,1,3,2}=K'_\infty(0)\big(L(N)-L([A_N])\big)\sum^N_{n=1}\eps_n$, a direct computation using \eqref{char} yields that for any $u \in \mathbb{R}$,
\begin{align*}
	\lim_{N\to\infty}\ln \mathbb{E} \big[e^{\iota u A^{-1}_{N} C^{-1}_{K} \text{I}_{N,1,3,2}}\big]
	&=-\sigma|u|^{\alpha} (1- \iota D \sgn(u)),
\end{align*}
which is the characteristic function (\ref{stablep}) of $Z_1^\alpha$. This completes the proof.
\end{proof}

We then deal with $\text{I}_{N,2}$ and $\text{I}_{N,3}$. 
Recall $\zeta_{n,j}$ from (\ref{zeta}). Observe that 
\begin{align*}
	\text{I}_{N,2}&=\text{I}_{N,2,1}+\text{I}_{N,2,2} = \sum^0_{n=-\infty}\sum^{N-n}_{j=1-n} \zeta_{n,j}+ \sum^0_{n=-\infty}\sum^{N-n}_{j=1-n} K'_{\infty}(0) a_j\varepsilon_n, \\[5pt]
	\text{I}_{N,3}&=\text{I}_{N,3,1}+\text{I}_{N,3,2} = \sum^N_{n=1}\sum^{N}_{j=N-n+1} \zeta_{n,j}+ \sum^N_{n=1}\sum^{N}_{j=N-n+1} K'_{\infty}(0) a_j\varepsilon_n.
\end{align*}

\begin{proposition} \label{prop32} Suppose that assumptions {\rm({\bf A1})-({\bf A2})} hold, and $\alpha\in (1,2]$.  For any $\delta\in (1,\alpha)$, there exists a positive constant $c_{\delta,3}>0$ such that $
\mathbb{E} | \mathrm{I}_{N,2,1}|+\E |\mathrm{I}_{N,3,1} |\leq c_{\delta,3}\, N^{2-\delta}$.
\end{proposition}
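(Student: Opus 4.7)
The plan is to first derive a Taylor-type bound
$|K_\infty(x)-K_\infty(0)-K'_\infty(0)\,x|\le C_\delta |x|^\delta$ for $\delta\in[1,2]$
through the Fourier inversion representation of $K_\infty$, and then execute a direct counting argument over the triangular index sets defining $\mathrm{I}_{N,2,1}$ and $\mathrm{I}_{N,3,1}$.

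For the pointwise remainder estimate I would start from
\[
K_\infty(x) = \frac{1}{2\pi}\int_\R \widehat K(u)\,\phi(-u)\,e^{-\iota u x}\,du,
\]
which follows under (A1)--(A2) from Fubini and Fourier inversion. Differentiating at $0$ and subtracting yields
\[
K_\infty(x)-K_\infty(0)-K'_\infty(0)\,x = \frac{1}{2\pi}\int_\R \widehat K(u)\,\phi(-u)\,(e^{-\iota u x}-1+\iota u x)\,du.
\]
The elementary inequality $|e^{\iota z}-1-\iota z|\le 2^{2-\delta}|z|^\delta$ for $\delta\in[1,2]$, together with the boundedness of $\widehat K$ (from $K\in L^1$, by (A1)) and the super-polynomial decay of $\phi$ (from (A2)), produces a finite constant $C_\delta$ with the desired pointwise estimate. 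Setting $x=a_j\varepsilon_n$, taking expectations, and invoking $\E\varepsilon_n=0$ to eliminate the centered linear term gives
\[
\E|\zeta_{n,j}|\le 2C_\delta\,|a_j|^\delta\,\E|\varepsilon|^\delta,
\]
where $\E|\varepsilon|^\delta<\infty$ because $\delta<\alpha$ in the stable domain of attraction.

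With $\beta=1$, so $|a_j|^\delta=j^{-\delta}\ell^\delta(j)$, an exchange of summation order handles $\mathrm{I}_{N,3,1}$:
\[
\sum_{n=1}^N\sum_{j=N-n+1}^N|a_j|^\delta=\sum_{j=1}^N j\,|a_j|^\delta=\sum_{j=1}^N j^{1-\delta}\ell^\delta(j)=O\bigl(N^{2-\delta}\ell^\delta(N)\bigr)
\]
for $\delta\in(1,2)$. For $\mathrm{I}_{N,2,1}$, substituting $m=-n\ge 0$ and swapping the order of summation yields
\[
\sum_{m=0}^\infty\sum_{j=m+1}^{N+m}|a_j|^\delta=\sum_{j=1}^N j\,|a_j|^\delta+N\sum_{j=N+1}^\infty|a_j|^\delta,
\]
and Karamata's theorem gives the same order $O(N^{2-\delta}\ell^\delta(N))$ for the second piece. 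To drop $\ell^\delta(N)$ and achieve the form $c_{\delta,3}N^{2-\delta}$, I would apply the above with a slightly larger $\delta'\in(\delta,\alpha)$ and use the Potter bounds to absorb $\ell^{\delta'}(N)$ into $N^{\delta'-\delta}$. The hardest part is the Fourier-based Taylor remainder; the index bookkeeping is routine. The constraint $\alpha\in(1,2]$ enters only through $\E\varepsilon=0$ (requiring $\alpha>1$) and the existence of $\delta\in(1,\alpha)$ for which the Taylor inequality applies.
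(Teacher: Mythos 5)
Your proposal is correct and follows essentially the same route as the paper: a Fourier representation of the Taylor remainder $K_\infty(x)-K_\infty(0)-K'_\infty(0)x$, the elementary bound $|e^{\iota z}-1-\iota z|\le C|z|^\delta$, finiteness of $\E|\varepsilon|^{\delta}$ for $\delta<\alpha$, Potter bounds to absorb the slowly varying factor, and a counting argument over the triangular index sets yielding $O(N^{2-\delta})$. The only cosmetic difference is that you use $\E\varepsilon=0$ to eliminate the linear term after taking expectations, whereas the paper bounds $|\zeta_{n,j}|$ pointwise via the estimate $|1-\phi_\varepsilon(-a_ju)|\lesssim|a_ju|^{\alpha'}$ from Lemma \ref{lemb}; both give the same conclusion.
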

\begin{proof} Note that by the Plancherel formula
\begin{align*}
\zeta_{n,j}
&=\frac{1}{2\pi}\int_{\R} \widehat{K}(u) \phi(-u)(e^{-\iota u a_j \varepsilon_{n}}-\phi_{\varepsilon}(-a_j u)+\iota u a_j \varepsilon_{n}) du.
\end{align*}
For any $\alpha'\in (\delta,\alpha)$, by assumptions ({\bf A1})-({\bf A2}) and Lemma \ref{lemb},
\begin{align} \label{Ksum}
|\zeta_{n,j}|   \nonumber
&\leq \int_{\R} |\widehat{K}(u)| |\phi(-u)|(|e^{-\iota u a_j \varepsilon_{n}}-1+\iota u a_j \varepsilon_{n}|+|1-\phi_{\varepsilon}(-a_j u)|) du\\ 
&\leq c_{1}\, \int_{\R} |\widehat{K}(u)| |\phi(u)|( |u a_j \varepsilon_{n}|^{\alpha'}+|a_j u|^{\alpha'}) du
\hspace{1pt}\leq c_{2}\, \big(|a_j|^{\alpha'} |\varepsilon_n|^{\alpha'}+|a_j|^{\alpha'}\big).
\end{align}
By \cite[Theorem 2.6.4 ]{IL}, $\mathbb{E} |\varepsilon_n|^{\alpha'}<\infty$ since $\al'<\al$. Then, for $\delta\in (1,\alpha')$, by the Potter bounds \cite[Theorem 1.5.6]{BGT89}, we have
\begin{align*}
	&\E |\text{I}_{N,2,1}|+\E |\text{I}_{N,3,1}| \leq c_{3}\,\sum^N_{n=1}\sum^{\infty}_{j=N-n+1} |a_j|^{\alpha'}+c_{3}\,\sum^0_{n=-\infty}\sum^{N-n}_{j=1-n} |a_j|^{\alpha'}\\
	&\quad\leq c_{4}\,\sum^N_{n=1}\sum^{\infty}_{j=N-n+1} j^{-\delta}+c_{4}\,\sum^{N}_{n=0}\sum^{N+n}_{j=1+n} j^{-\delta}+c_{4}\,\sum^{\infty}_{n=N+1}\sum^{N+n}_{j=1+n} j^{-\delta}\\
	&\quad\leq c_{5}\,\big(1+\sum^{N-1}_{n=1} (N-n)^{1-\delta}\big)+c_{5}\,\big(1+\sum^{N}_{n=1}  n^{1-\delta}\big)+c_{5}\,\sum^{\infty}_{n=N+1}\big(n^{1-\delta}-(N+n)^{1-\delta}\big)\\
	&\quad\leq c_{6}\,N^{2-\delta}+c_{6}\,N^{2-\delta}\int^{\infty}_1(x^{1-\delta}-(1+x)^{1-\delta})\, dx\leq c_{7}\,N^{2-\delta},
\end{align*}
where in the last inequality we used $1<\delta<2$ and $0<x^{1-\delta}-(1+x)^{1-\delta}\leq x^{-\delta}$ for all $x>0$.
\end{proof}

\begin{proposition} \label{prop34} Suppose that assumptions {\rm({\bf A1})-({\bf A2})} hold, $\alpha\in (1,2]$.  Then, as $N\to\infty$,
\[
A^{-1}_N (|\mathrm{I}_{N,2,2}|+|\mathrm{I}_{N,3,2}|) \overset{\mathbb{P}}{\longrightarrow} 0.
\]
\end{proposition}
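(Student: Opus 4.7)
The plan is to show that the characteristic functions of $A_N^{-1}\text{I}_{N,2,2}$ and $A_N^{-1}\text{I}_{N,3,2}$ converge pointwise to $1$; by L\'evy's continuity theorem this yields convergence in probability to $0$. Interchanging orders of summation gives
\[
\text{I}_{N,3,2} = K'_\infty(0)\sum_{n=1}^{N} c_n\,\varepsilon_n, \qquad c_n:=L(N)-L(N-n),
\]
and $\text{I}_{N,2,2} = K'_\infty(0)\sum_{m=0}^{\infty} c'_m\,\varepsilon_{-m}$ with $c'_m:=L(N+m)-L(m)$. Since both are weighted sums of i.i.d.\ innovations, the log-characteristic functions split into sums of $\ln\phi_\varepsilon$ values.

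A key preliminary is $L(N)/A_N\to 0$: from \eqref{sca1} one has $A_N\geq N_{\al,H_\al}(L(N)-L(A_N))$, and Lemma~\ref{lemslow} forces $A_N/L(N)\to\infty$. Thus $\lambda_n:=u K'_\infty(0)c_n/A_N\to 0$ uniformly in $n$, so the expansion \eqref{char} applies uniformly and yields
\[
\bigl|\ln\E[e^{\iota u\,\text{I}_{N,3,2}/A_N}]\bigr| \leq C_u\,A_N^{-\alpha}\sum_{n=1}^{N} c_n^\alpha H(A_N/c_n).
\]
It thus suffices to show $A_N^{-\alpha}\sum_n c_n^\alpha H(A_N/c_n)\to 0$ and the analog for $\text{I}_{N,2,2}$.

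To estimate this sum, split at the threshold $b:=L(N)-L(A_N)$ (so $c_n>b$ iff $n>N-A_N$). Using Potter's bound on the slowly varying $H$ relative to $H(A_N/b)=H(N_{\al,H_\al})\sim H_\alpha(N)$, for any sufficiently small $\delta'>0$,
\[
\sum_{n=1}^N c_n^\alpha H(A_N/c_n) \leq C\,H_\alpha(N)\Bigl[\,b^{\delta'}\!\!\sum_{c_n\leq b} c_n^{\alpha-\delta'} + b^{-\delta'}\!\!\sum_{c_n>b} c_n^{\alpha+\delta'}\Bigr].
\]
Since $c_n\sim n\ell(N)/N$ for $n$ small, a Karamata-type computation yields $\sum_{n=1}^N c_n^{\alpha\pm\delta'}=O(N\ell(N)^{\alpha\pm\delta'})$, whereas the large-$c_n$ sum has at most $A_N$ terms each bounded by $L(N)^{\alpha+\delta'}$. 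Dividing by $A_N^\alpha=NH_\alpha(N)b^\alpha$, the two resulting terms become $(\ell(N)/b)^{\alpha-\delta'}$ and $N^{1/\alpha-1}H_\alpha^{1/\alpha}(N)\,L(N)^{\alpha+\delta'}/b^{\alpha+\delta'-1}$; the first vanishes by Lemma~\ref{lemslow}, while the second vanishes because the polynomial factor $N^{1/\alpha-1}$ (using $\alpha>1$) dominates any slowly varying growth in the remaining factors.

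For $\text{I}_{N,2,2}$, the indices $0\leq m\leq N$ are handled identically; for the tail $m>N$, the estimate $c'_m\sim N\ell(m)/m$ combined with Karamata's theorem gives $\sum_{m>N}(c'_m)^\alpha=O(N\ell(N)^\alpha)$, after which the same Potter-split applies. The main technical obstacle is the very wide range of scales of $\{c_n\}$ (from $N^{-1}\ell(N)$ up to $L(N)$), which forces the Potter-bound split-and-conquer in order to cleanly separate the slowly varying factor $H_\alpha(N)$ before matching against the normalization $A_N^\alpha$.
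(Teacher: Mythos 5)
Your proposal is correct and follows essentially the same route as the paper: reduce to the deterministic sum $A_N^{-\alpha}\sum_n c_n^{\alpha}H(A_N/c_n)$ via the characteristic function expansion \eqref{char}, extract $H_\alpha(N)$ by Potter bounds using $A_N\asymp N_{\al,H_\al}(L(N)-L(A_N))$, and finish with Lemma~\ref{lemslow}. The only difference is bookkeeping: the paper partitions the index set at $n=A_N$ and $n=\delta N$ and sends $\delta\downarrow 0$ at the end, whereas you partition by the size of $c_n$ relative to $L(N)-L(A_N)$ and absorb the whole small-$c_n$ range into one Karamata-type estimate $\sum c_n^{\alpha-\delta'}=O(N\ell(N)^{\alpha-\delta'})$ --- both reductions ultimately rest on the same two facts, namely that $A_N/N$ decays polynomially (so the $O(A_N)$ large-coefficient terms are negligible) and that $\ell(N)/(L(N)-L(A_N))\to0$.
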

\begin{proof} 
It suffices to consider the case when $K'_{\infty}(0)=1$. For any $n\geq 0$, let $a^N_n=\sum^{N+n}_{j=1+n}a_j$. Then, for any $u\in\mathbb{R}$ with $u\neq 0$, by (\ref{char}), as $N\to\infty$,
\begin{align*}
\ln \mathbb{E}\big[e^{\iota u A^{-1}_N \text{I}_{N,2,2}}\big]
&=-(\sigma+o(1))\sum^{\infty}_{n=0}|\frac{u a^N_n}{A_N}|^{\alpha}H(|\frac{u a^N_n}{A_N}|^{-1})(1-\iota D\text{sgn}(\frac{u a^N_n}{A_N})).
\end{align*}
So we only need to show $
\lim_{N\to\infty} \sum^{\infty}_{n=0}|\frac{a^N_n}{A_N}|^{\alpha}H(|\frac{a^N_n}{A_N}|^{-1})=0$.

For any $\eta\in(0,\alpha-1)$ and $\delta\in(0,1)$, by the Potter bounds \cite[Theorem 1.5.6]{BGT89} 
and Remark~\ref{rem1}~(1),
\begin{align*}
	&\limsup\limits_{N\to\infty} \sum^{\infty}_{n=0}|\frac{a^N_n}{A_N}|^{\alpha}H(|\frac{a^N_n}{A_N}|^{-1}) 
	\leq\limsup\limits_{N\to\infty} \frac{1}{N}\sum^{\infty}_{n=0}  \Big|\frac{a^N_n}{L(N)-L(A_N)}\Big|^{\alpha}\hspace{1pt}\frac{H\big(N^{\frac{1}{\alpha}}H^{\frac{1}{\alpha}}_{\alpha}(N)\frac{L(N)-L(A_N)}{a^N_n}\big)}{H_{\alpha}(N)}\\
	&\quad\leq  \limsup \limits_{N\to\infty} \bigg(\sum^{[A_N]}_{n=0}+\sum^{[\delta N]}_{n={[A_N]}+1}+\sum^{\infty}_{n=[\delta N]+1}\bigg)\frac{c_1}{N} \Big|\frac{a^N_n}{L(N)-L(A_N)}\Big|^{\alpha-\eta}.
\end{align*}
For the first two sums, the boundedness of summands yields an upper bound $2c_1\delta$.
For the last sum,
\begin{align*}
\limsup \limits_{N\to\infty}  \frac{c_1}{N}\sum^{\infty}_{n=[\delta N]+1}\Big|\frac{a^N_n}{L(N)-L(A_N)}\Big|^{\alpha-\eta}
\leq\limsup \limits_{N\to\infty} \frac{c_1}{N}\sum^{\infty}_{n=[\delta N]+1}\Big|\frac{Nn^{-1}\ell(n)}{L(N)-L(A_N)}\Big|^{\alpha-\eta}\\[5pt]
\quad\leq  \limsup \limits_{N\to\infty} \frac{c_2(\delta N+1)}{N}\Big|\frac{\frac{N}{\delta N+1}\ell(\delta N+1)}{L(N)-L(A_N)}\Big|^{\alpha-\eta}
=c_2\delta^{1+\eta-\alpha}\limsup \limits_{N\to\infty} \Big|\frac{\ell(N)}{L(N)-L(A_N)}\Big|^{\alpha-\eta}
=0,
\end{align*}
where we used Lemma \ref{lemslow} in the last inequality. Letting $\delta\downarrow 0$, we have $A^{-1}_N \text{I}_{N,2,2}\overset{\mathbb{P}}{\longrightarrow} 0$ as $N\to\infty$.

As for the term $\text{I}_{N,3,2}$, we only need to show
\begin{align*}
\lim\limits_{N\to\infty} \sum^{N}_{n=1}\Big|\frac{\sum^N_{j=N-n+1}a_j}{A_N}\Big|^{\alpha}H\Big(\Big|\frac{\sum^N_{j=N-n+1}a_j}{A_N}\Big|^{-1}\Big)=0.
\end{align*}
Similarly, for any $\eta\in (0,\alpha-1)$, we could obtain that for large $N$,
\begin{align*}
\sum^{N}_{n=1}\Big|\frac{\sum^N_{j=N-n+1}a_j}{A_N}\Big|^{\alpha}H\Big(\Big|\frac{\sum^N_{j=N-n+1}a_j}{A_N}\Big|^{-1}\Big)\leq c_3
N^{-1}\sum^{N}_{n=1}\Big|\frac{\sum^N_{j=N-n+1}a_j}{L(N)-L(A_N)}\Big|^{\alpha-\eta}.
\end{align*}
Note that $\sum^N_{n=1}\big|\sum^N_{j=N-n+1}a_j\big|^{\alpha-\eta}\leq \sum^{\infty}_{n=0} |a^N_n|^{\alpha-\eta}$.
The desired result then follows from similar arguments for the term $\text{I}_{N,2,2}$. \end{proof} 

\medskip
\noindent
{\bf Proof of Theorem \ref{thm1}}: 
The weak convergence of one-dimensional distributions immediately follows from  Propositions \ref{prop31}, \ref{prop32} and \ref{prop34}.

To show the convergence of finite-dimensional distributions, we only need to show that for any $t_2>t_1>0$, the increment $A_N^{-1}(S_{[Nt_2]}-S_{[Nt_1]})$ is asymptotically independent of $A_N^{-1}S_{[Nt_1]}$ and converges to $K'_\infty(0)Z^\al_{t_2-t_1}$.
Note that the only term that does not vanish in the limit is $\text{I}_{[Nt_1],1}$.
\begin{align*}
	\text{I}_{[N t_2],1} - \text{I}_{[N t_1], 1} &= \Big( \sum_{n=[N t_1] +1}^{[N t_2]} \sum_{j=1}^{[N t_2]} + \sum_{n=1}^{[N t_1]} \sum_{j=[N t_1]+1}^{[N t_2]} \Big) \xi_{n,j}.
\end{align*}
The first double sum is independent of $\text{I}_{[N t_1], 1}$ and converges weakly to $K'_\infty(0)Z^\al_{t_2-t_1}$ after scaling. It remains to bound the second double sum. Note that
\begin{equation*}
	\sum_{n=1}^{[N t_1]} \sum_{j=[N t_1]+1}^{[N t_2]} \xi_{n,j} = \sum_{n=1}^{[N t_1]} \sum_{j=[N t_1]+1}^{[N t_2]} \zeta_{n,j} + \sum_{n=1}^{[N t_1]} \sum_{j=[N t_1]+1}^{[N t_2]} K'_\infty(0)a_j\eps_n =:\text{II}_{N, t_1, t_2,1} + \text{II}_{N, t_1, t_2,2},
\end{equation*}
where $\zeta_{n,j}$ is defined in (\ref{zeta}).
The same argument as in Proposition~\ref{prop32} yields  $\mathbb{E} \big| \text{II}_{N, t_1, t_2,1} \big| \leq c_1 N^{2-\delta}$ for any $\delta\in(1,2)$. Observe that $\sum_{j=[N t_1]+1}^{[N t_2]} a_j \leq c_2\, \ell(N)$ by the uniform convergence theorem.  Now, using similar arguments as in Proposition \ref{prop34} and Lemma~\ref{lemslow}, we see that
$A_N^{-1} \text{II}_{N, t_1, t_2,2} \overset{\mathbb{P}}{\longrightarrow} 0$ as $N\to\infty$.
This completes the proof.

\section{Proof of Theorem \ref{thm2}} \label{S:thm2}

In the case $\alpha\beta=2$ with $\beta>1$ or $\alpha=2, \beta=1, K^{\prime}_{\infty}(0) = 0$, we will first obtain the following proposition which gives the main contribution in the partial sum $S_N$. Let
\begin{align} \label{staun}
\widetilde{T}_{N}=\sum^{N}_{n=1} \sum^{\infty}_{j=1} \big(K_{\infty}(a_j \varepsilon_{n})-\E K_{\infty}(a_j \varepsilon_{n}) \big).
\end{align}
For $\al=2,\beta=1$ and $K^{\prime}_{\infty}(0) = 0$, the same estimate as (\ref{Ksum}) gives that for $\al'\in(1,\al)$,
\begin{equation*}
	|K_{\infty}(a_j \varepsilon_{n})-\E K_{\infty}(a_j \varepsilon_{n})|=
	|K_{\infty}(a_j \varepsilon_{n})-\E K_{\infty}(a_j \varepsilon_{n})-K'_{\infty}(0)a_j \varepsilon_{n}| \leq
	c_1 (|a_j|^{\alpha'}|\varepsilon_n|^{\alpha'}+|a_j|^{\al'}).
\end{equation*}
For $\beta>1$, similarly by assumptions ({\bf A1})-({\bf A2}) and Lemma \ref{lemb}, 
\begin{align*}
&|K_{\infty}(a_j \varepsilon_{n})-\E K_{\infty}(a_j \varepsilon_{n})|
\leq \int_{\mathbb{R}}|\widehat{K}(u)||\phi(-u)|(|e^{-\iota u a_j\varepsilon_n}-1|+|1-\phi_{\varepsilon}(-a_ju)|)du\\
&\quad\leq c_2 \int_{\mathbb{R}}|\widehat{K}(u)||\phi(-u)|(|u a_j\varepsilon_n|^{\alpha'\wedge 1}+|a_ju|^{\alpha'})du
\leq c_3 (|a_j|^{\alpha'\wedge 1}|\varepsilon_n|^{\alpha'\wedge 1}+|a_j|^{\alpha'}).
\end{align*}
In both cases, the two upper bounds above are summable in $j$. Hence, the infinite series with respect to $j$ in (\ref{staun}) converges in both cases, and $\widetilde{T}_{N}$ is well defined. Using similar arguments as in the proofs of \cite[Lemmas 3.3 and 3.5]{LXX}, we have the following estimates for $T_{N}-\widetilde{T}_{N}$.

\begin{lemma} \label{lm51} Suppose that assumptions $(${\bf A1}$)$-$(${\bf A2}$)$ hold, $\alpha\beta=2$ and $\beta>1$ or $\alpha=2, \beta=1, K^{\prime}_{\infty}(0) = 0$.  For any $\alpha'\in (0,\alpha)$, $\beta'\in (1,\beta)$ with $\alpha\in (0,1]$ and $\alpha'\beta'>1$, there is a positive constant $c_{\alpha',\beta',3}>0$ such that
\[
\E |T_{N}-\widetilde{T}_{N}| \leq c_{\alpha',\beta',3}\, N^{2-\alpha'\beta'}.
\]
For any $\alpha'\in (1,\alpha)$, $\beta'\in (1,\beta)$ with $\alpha\in (1,2)$ and $\alpha'\beta'>1$, there is a positive constant $c_{\alpha',\beta',4}>0$ such that
\[
\E |T_{N}-\widetilde{T}_{N}|^{\alpha'} \leq c_{\alpha',\beta',4}\, N^{1+\alpha'(1-\beta')}.
\]
For any $\alpha'\in (1,2)$, $\beta'\in (0,1)$ with $\alpha=2, \beta=1, K^{\prime}_{\infty}(0) = 0$ and $\alpha'\beta'>1$, there is a positive constant $c_{\alpha',\beta',5}>0$ such that
\[
\E |T_{N}-\widetilde{T}_{N}| \leq c_{\alpha',\beta',5}\, N^{2-\alpha'\beta'}.
\]
\end{lemma}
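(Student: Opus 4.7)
The strategy closely follows the proofs of \cite[Lemmas 3.3 and 3.5]{LXX}. First, I would reestablish uniform pointwise bounds on $\xi_{n,j}:=K_\infty(a_j\eps_n)-\E K_\infty(a_j\eps_n)$ via the Plancherel formula applied to $K_\infty$ together with Lemma \ref{lemb}, exactly as in the derivation of the two displayed inequalities immediately preceding the lemma: for any $\al''\in(0,\al)$,
\[
|\xi_{n,j}|\leq c\bigl(|a_j|^{\al''\wedge 1}|\eps_n|^{\al''\wedge 1}+|a_j|^{\al''}\bigr),
\]
with the sharper variant $|\xi_{n,j}|\leq c(|a_j|^{\al''}|\eps_n|^{\al''}+|a_j|^{\al''})$ for $\al''\in(1,2)$ in the case $\al=2,\be=1,K'_\infty(0)=0$, where the vanishing linear Taylor coefficient absorbs the $|a_j||\eps_n|$ contribution.

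Next I would decompose $T_N-\widetilde{T}_N$ into independent pieces. From the representation \eqref{T2} and the definition of $\widetilde{T}_N$, one gets
\[
T_N-\widetilde{T}_N=\text{I}_{N,2}-\text{I}_{N,3}-\sum_{n=1}^N\sum_{j=N+1}^\infty\xi_{n,j}=\sum_{n\leq N}Y_n,
\]
where $Y_n=\sum_{j=1-n}^{N-n}\xi_{n,j}$ for $n\leq 0$ and $Y_n=-\sum_{j\geq N-n+1}\xi_{n,j}$ for $1\leq n\leq N$. The family $\{Y_n\}$ is independent (different $n$'s involve disjoint innovations) and is centered whenever $\al>1$.

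For cases (i) and (iii) (the $L^1$ bounds), I would apply Minkowski in $L^1$ to reduce to $\sum_{n,j}\E|\xi_{n,j}|$. The pointwise bound combined with $\E|\eps|^{\al''}<\infty$ for $\al''<\al$ gives $\E|\xi_{n,j}|\leq c|a_j|^{\al''}$, and Potter's theorem converts $|a_j|^{\al''}$ to $c\,j^{-\al'\be'}$ for the prescribed $\al',\be'$. Summing over the three index regions: the region $n\geq 1,\,j\geq N+1$ contributes $cN\cdot N^{1-\al'\be'}=cN^{2-\al'\be'}$, while each of the two ``triangular'' regions collapses, via telescoping and an integral comparison, to $c\sum_{m=1}^N m^{1-\al'\be'}\leq cN^{2-\al'\be'}$, using $1<\al'\be'<\al\be=2$.

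For case (ii) (the $L^{\al'}$ bound, $\al'\in(1,\al)\subset(1,2)$), apply the von Bahr--Esseen inequality to the independent centered $\{Y_n\}$ to obtain $\E|T_N-\widetilde{T}_N|^{\al'}\leq 2\sum_n\E|Y_n|^{\al'}$. The principal obstacle is that inside a fixed $Y_n$ the summands $\xi_{n,j}$ all depend on the \emph{single} innovation $\eps_n$, so a Minkowski-in-$L^{\al'}$ split within $Y_n$ is wasteful; instead I would first bound $|Y_n|$ pointwise (as a deterministic function of $\eps_n$) by $c(|\eps_n|A_n+B_n)$, with $A_n=\sum_j|a_j|$ and $B_n=\sum_j|a_j|^{\al''}$ summed over the relevant $j$-range, and then take $L^{\al'}$ norms using $\E|\eps|^{\al'}<\infty$. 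Since $\be'>1$, both sums are tail-summable; together with Potter bounds they yield $A_n\leq cM^{1-\be'}$ and $B_n\leq cM^{1-\al''\be'}$ for the starting index $M$ of the window. Summing $\E|Y_n|^{\al'}$ separately over the subregimes $1\leq n\leq N$, $-N<n\leq 0$, and $n\leq -N$ (the last requiring a different tail estimate since the $j$-window of length $N$ sits far from the origin) collapses all three contributions to the single target $cN^{1+\al'(1-\be')}$.
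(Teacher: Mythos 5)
Your proposal is correct and follows essentially the same route as the paper, which proves this lemma simply by invoking "similar arguments as in the proofs of \cite[Lemmas 3.3 and 3.5]{LXX}": namely, the pointwise Fourier/Plancherel bounds on $\xi_{n,j}$ established just before the lemma, the decomposition of $T_N-\widetilde{T}_N$ into independent centered blocks indexed by $n$, an $L^1$/Potter-bound summation for the first and third estimates, and von Bahr--Esseen for the $L^{\alpha'}$ estimate. The only detail left implicit is the verification that $\alpha'(\beta'-1)<2-\alpha<1$ (so the sum $\sum_{m\le N} m^{\alpha'(1-\beta')}$ is indeed of order $N^{1+\alpha'(1-\beta')}$), which follows immediately from the parameter constraints.
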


The next Lemma reveals the regularity of slowly varying function $\ell$ at infinity under the assumption (\textbf{T1)} and will be used in the proof of Proposition \ref{prop52}.

\begin{lemma}\label{lem42}
	Under assumption {\rm(\textbf{T1)}}, $
	\lim_{x \rightarrow \infty} \ell(x \ell^{a}(x))/\ell(x) = 1$ locally uniformly in $a \in \mathbb{R}$. 
\end{lemma}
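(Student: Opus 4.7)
The plan is to read the ratio directly off the Karamata representation supplied by assumption (\textbf{T1}):
\[
\frac{\ell(x\ell^a(x))}{\ell(x)} = \exp\left(\int_x^{x\ell^a(x)} \frac{\eta(t)}{t}\, dt\right).
\]
It therefore suffices to show the exponent tends to $0$ as $x\to\infty$, uniformly for $a$ in a fixed compact set $[-M,M]$.

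Since $\ell$ is slowly varying, $|\ln \ell(x)|=o(\ln x)$, so that over the range of integration $\ln t = \ln x + O(|\ln \ell(x)|) \sim \ln x$ uniformly in $a\in[-M,M]$. Combining the eventual bound $|\eta(t)|\leq (\ln t)^{-\gamma}$ from (\textbf{T1}) with the identity $\int_x^{x\ell^a(x)} t^{-1}\, dt = a\ln\ell(x)$, one obtains
\[
\left|\int_x^{x\ell^a(x)} \frac{\eta(t)}{t}\, dt\right|\leq c_M\, \frac{|\ln \ell(x)|}{(\ln x)^{\gamma}}
\]
for all sufficiently large $x$ and all $a\in[-M,M]$, so the problem reduces to an a priori upper bound on $|\ln \ell(x)|$.

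For that bound I would use the Karamata representation once more: $|\ln \ell(x)| \leq |\ln \sigma_0| + \int_1^x |\eta(t)|/t\, dt$, and evaluate $\int_{x_0}^x (t(\ln t)^{\gamma})^{-1}\, dt$ to find that $|\ln \ell(x)|$ is $O(1)$ if $\gamma>1$, $O(\ln\ln x)$ if $\gamma=1$, and $O((\ln x)^{1-\gamma})$ if $1/2<\gamma<1$. In each case the quotient $(\ln x)^{-\gamma}|\ln \ell(x)|\to 0$ as $x\to\infty$, uniformly in $a\in[-M,M]$, which yields the claimed local uniform convergence. The only subtle point is this three-case analysis of $|\ln \ell(x)|$: the threshold $\gamma>1/2$ in (\textbf{T1}) is exactly what is needed to make the worst case $1/2<\gamma<1$ work, since the resulting exponent $1-2\gamma$ on $\ln x$ is then negative. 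Everything else is a routine application of the Karamata representation and the slowly varying property.
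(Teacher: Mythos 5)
Your proof is correct, but it follows a genuinely different route from the paper's. The paper does not compute the ratio $\ell(x\ell^a(x))/\ell(x)$ directly: it instead shows $\bigl|\ell(2x)/\ell(x)-1\bigr|\cdot|\ln\ell(x)|\to 0$, by applying the mean value theorem to the Karamata representation and combining $|\eta(x')|\le(\ln x')^{-\gamma}$ with $|\ln\ell(x)|\le|\ln\sigma_0|+\int_1^x|\eta(t)|t^{-1}\,dt$, and then invokes the Bojanic--Seneta criterion \cite[Theorem 2.3.3]{BGT89} to conclude the locally uniform convergence. You bypass the cited theorem entirely by writing the ratio as $\exp\bigl(\int_x^{x\ell^a(x)}\eta(t)t^{-1}\,dt\bigr)$ and estimating the exponent by hand via $\int_x^{x\ell^a(x)}t^{-1}\,dt=a\ln\ell(x)$. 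The mathematical heart is the same in both arguments --- the bound $|\eta(t)|\le(\ln t)^{-\gamma}$ forces $(\ln x)^{-\gamma}|\ln\ell(x)|\to0$, and the threshold $\gamma>1/2$ is exactly what makes the worst case $(\ln x)^{1-2\gamma}\to0$ work --- so the two proofs are of comparable difficulty; yours buys self-containedness and a transparent derivation of the uniformity in $a$, while the paper's buys brevity by outsourcing the last step. Two small points to tidy in your write-up: (i) you should note that $\min(x,x\ell^a(x))\ge x^{1-o(1)}\to\infty$ uniformly for $a\in[-M,M]$, so that the \emph{eventual} bound on $\eta$ from (\textbf{T1}) applies throughout the range of integration; (ii) the a priori bound on $|\ln\ell(x)|$ is used (to justify $\ln t\sim\ln x$) before it is established, which is only an ordering issue, not a circularity, since that bound comes independently from the Karamata representation.
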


\begin{proof}
	By assumption {\rm(\textbf{T1)}} and the mean value theorem,
	\begin{align*}
&	\limsup_{x\to\infty}	\Big|\frac{\ell(2x)}{\ell(x)}-1\Big||\ln{\ell(x)}| 
=\limsup_{x\to\infty}	\Big|\frac{\ell(2x)-\ell(x)}{\ell(x)}\Big||\ln{\ell(x)}|
=\limsup_{x\to\infty}	x\Big|\frac{\ell'(x')}{\ell(x)}\Big||\ln{\ell(x)}| \\ 
&\quad\leq \limsup_{x\to\infty} |\eta(x')| \Big|\int_1^{x} \frac{\eta(t)}{t} dt + \ln{\sigma_0}\Big|\leq \limsup_{x\to\infty} \frac{c_1}{\ln^{\gamma}{x}} \big(\ln^{1-\gamma}{x} + 1\big)=0,
	\end{align*}
for some $x'\in (x,2x)$. 
Now the desired result immediately follows from \cite[Theorem 2.3.3]{BGT89}.
\end{proof}

For each $x\in\R$, let 
\begin{align} \label{etakx}
\eta_K(x)=\sum\limits^{\infty}_{j=1} \big(K_{\infty}(a_j x)-\E K_{\infty}(a_j \varepsilon_1)\big).
\end{align}
The convergence of the infinite series in (\ref{etakx}) follows from similar arguments as above. 
We note that by (\ref{staun}), $\widetilde{T}_{N}=\sum^{N}_{n=1}  \eta_K(\varepsilon_n)$ is a sum of i.i.d. random variables $\eta_K(\varepsilon_n)$. 
To obtain its limit distribution, we only need to derive the asymptotic behavior of $\widetilde{T}_{N}$.
\begin{proposition} \label{prop52} 
	Under the assumptions of Theorem \ref{thm2}, as $N\to\infty$,
\[
N^{-\frac{1}{2}}\overline{H}^{-\frac{1}{2}}_2(N) \widetilde{T}_{N} \overset{\mathcal{L}}{\longrightarrow} \sqrt{2(\gamma_1+\gamma_2)}\, W_1.
\]
\end{proposition}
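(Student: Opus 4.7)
The strategy is to apply a classical central limit theorem to the i.i.d.\ sum $\widetilde{T}_N=\sum_{n=1}^N\eta_K(\varepsilon_n)$; here $\E\eta_K(\varepsilon_1)=0$ follows from Fubini (justified by the same absolutely summable bound that produced (\ref{Ksum})). Since the tails of $\eta_K(\varepsilon_1)$ will turn out to decay like $y^{-2}$ times a slowly varying function, $\eta_K(\varepsilon_1)$ need not have a finite variance in general, and one must invoke a CLT in the domain of attraction of the normal law (Feller--L\'evy type): $\widetilde{T}_N/b_N$ converges in law to $N(0,1)$ provided $b_N^2\sim N\sigma^2(b_N)$, where $\sigma^2(y):=\E[\eta_K(\varepsilon_1)^2\,1_{|\eta_K(\varepsilon_1)|\leq y}]$ is slowly varying at infinity.

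The heart of the argument is the pointwise asymptotic $\eta_K(x)\sim C^{+}_K\,R_{\beta,\ell,\beta}(x)$ as $x\to+\infty$, and symmetrically $\eta_K(x)\sim C^-_K\,R_{\beta,\ell,\beta}(|x|)$ as $x\to-\infty$. Writing $\eta_K(x)=\sum_{j\geq1}[K_\infty(a_jx)-K_\infty(0)]+\text{const}$, I would approximate the discrete sum by $\int_1^\infty[K_\infty(a_jx)-K_\infty(0)]\,dj$ (with an Euler--Maclaurin error controlled by $\|K_\infty\|_\infty$) and change variables $t=a_jx$. Under (\textbf{T1}) the Potter bounds give the uniform approximation $R_{\beta,\ell,\beta}(x/t)/R_{\beta,\ell,\beta}(x)\to t^{-1/\beta}$, after which dominated convergence yields
\[
\eta_K(x)\sim \frac{R_{\beta,\ell,\beta}(x)}{\beta}\int_0^\infty[K_\infty(t)-K_\infty(0)]\,t^{-1-1/\beta}\,dt = C^+_K\, R_{\beta,\ell,\beta}(x),
\]
the final equality by the substitution $s=t^{-\beta}$ matching the definition of $C^+_K$. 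The absolute summability of the defining series is ensured, in case (a) $\beta>1$, by $|K_\infty(a_jx)-K_\infty(0)|\lesssim |a_jx|^{\alpha'\wedge 1}$ for some $\alpha'\in(1/\beta,\alpha)$ together with $\sum_j a_j^{\alpha'\wedge 1}<\infty$; in case (b) $\alpha=2,\beta=1,K'_\infty(0)=0$, the quadratic-order vanishing of $K_\infty(u)-K_\infty(0)$ at $u=0$ replaces the divergent $\sum_j a_j$ by the convergent $\sum_j a_j^2$.

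Combining this asymptotic with the tail (\ref{h}) of $\varepsilon_1$ and $R^{\leftarrow}_\beta(y)\sim y^\beta\ell^{-1}(y)$ (from Remark~\ref{rem3}) yields
\[
\P(\eta_K(\varepsilon_1)>y)\sim \gamma_2\,y^{-2}\ell^{\alpha}(y)h(y^{\beta}\ell^{-1}(y)),\qquad \P(\eta_K(\varepsilon_1)<-y)\sim \gamma_1\,y^{-2}\ell^{\alpha}(y)h(y^{\beta}\ell^{-1}(y)).
\]
Integration then gives $\sigma^2(y)=\int_0^y 2s\,\P(|\eta_K(\varepsilon_1)|>s)\,ds\sim 2(\gamma_1+\gamma_2)\overline{H}(y)$, using the explicit form of $\overline{H}$ derived in Remark~\ref{rem3}. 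Since $\overline{H}(y)\to\infty$, $\eta_K(\varepsilon_1)$ lies in the normal domain of attraction; with $A_N=N^{1/2}\overline{H}_2^{1/2}(N)$, the defining relation $\overline{H}(A_N)\sim\overline{H}_2(N)$ gives $A_N^2\sim N\sigma^2(A_N)/(2(\gamma_1+\gamma_2))$, and the Feller--L\'evy CLT delivers $A_N^{-1}\widetilde{T}_N\to\sqrt{2(\gamma_1+\gamma_2)}\,W_1$ in law. The principal obstacle is the pointwise asymptotic: controlling the sum-to-integral error uniformly in $x$ (which is why (\textbf{T1}) is needed) and verifying integrability of $[K_\infty(t)-K_\infty(0)]t^{-1-1/\beta}$ near $0$ and $\infty$ using only $K\in L^1\cap L^2$ and, in case (b), the vanishing of $K'_\infty(0)$; the two cases must be handled in a unified way so that the same $C^+_K$ emerges in both regimes.
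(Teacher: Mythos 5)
Your proposal follows essentially the same route as the paper's proof: establish the pointwise asymptotics $\eta_K(x)\sim C^{\pm}_K R_{\beta,\ell,\beta}(|x|)$ (the paper's Step~1, adapted from \cite[Proposition~3.4]{LXX} with the tail term $\text{I}_3$ handled exactly by the splitting at $j\asymp M R_{\beta,\ell,\beta}(x)$ you sketch), convert them into the regularly varying tails \eqref{attraction} of $\eta_K(\varepsilon_1)$ (Step~2), and conclude by a CLT for i.i.d.\ sums in the domain of attraction of the normal law (Step~3). The only cosmetic difference is that you verify the normalization via the Feller--L\'evy truncated-variance criterion $\sigma^2(y)\sim 2(\gamma_1+\gamma_2)\overline{H}(y)$, whereas the paper passes through the characteristic-function expansion $\ln\mathbb{E}[e^{\iota u\eta_K(\varepsilon_1)}]=-(\gamma_1+\gamma_2+o(1))|u|^2\overline{H}(|u|^{-1})$; these are equivalent.
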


\begin{proof}  
	This can be shown using the proof of \cite[Proposition 3.4]{LXX} with a minor adaption.
	Thus, we only sketch the proof and focus on the difference.

\noindent
{\bf Step 1} The same proof as for \cite[Proposition 3.4]{LXX} gives
\begin{align} \label{etak}
\lim\limits_{x\to \pm\infty} \frac{1}{|x|^{\frac{1}{\beta}} \ell^{\frac{1}{\beta}}(|x|^{\frac{1}{\beta}})} \eta_K(x)=\int^{\infty}_0 \big(K_{\infty}(\pm t^{-\beta})-K_{\infty}(0)\big)dt=C^{\pm}_K
\end{align}
except that an adaptation is needed for the term 
\[
\text{I}_3= \frac{1}{|x|^{\frac{1}{\beta}}\ell^{\frac{1}{\beta}}(|x|^{\frac{1}{\beta}})} \sum^{\infty}_{j=[M x^{\frac{1}{\beta}}\ell^{\frac{1}{\beta}}(x^{\frac{1}{\beta}})]+1} (K_{\infty}(a_j x)-\E K_{\infty}(a_j \varepsilon_1)), 
\] 
where $M$ is some large integer. In the case $\beta>1$, by the Lipschitz continuity of $K_{\infty}$ and \cite[Proposition 1.5.8]{BGT89},
\begin{align*} 
	&\limsup_{x\to\infty}|\text{I}_3|
	\leq c_1\limsup_{x\to\infty} \frac{1}{x^{\frac{1}{\beta}}\ell^{\frac{1}{\beta}}(x^{\frac{1}{\beta}})   } \sum^{\infty}_{j=[M x^{\frac{1}{\beta}}\ell^{\frac{1}{\beta}}(x^{\frac{1}{\beta}})]+1}  |a_j| x \nonumber\\[5pt]
	&\quad\leq c_2\, \limsup_{x\to\infty} x^{-\frac{1}{\beta}}\ell^{-\frac{1}{\beta}}(x^{\frac{1}{\beta}})    \big(M x^{\frac{1}{\beta}}\ell^{\frac{1}{\beta}}(x^{\frac{1}{\beta}})   \big)^{1-\beta}\ell\big(Mx^{\frac{1}{\beta}}\ell^{\frac{1}{\beta}}(x^{\frac{1}{\beta}})\big) x \nonumber
	=c_2\, M^{1-\beta}.
\end{align*} 
In the case $\beta=1, K^{\prime}_{\infty}(0) = 0$,
\begin{align*} 
	&\limsup_{x\to\infty}|\text{I}_3|
	=\limsup_{x\to\infty} \frac{1}{2\pi}\frac{1}{x \ell(x)} \bigg|\int_{\mathbb{R}} \widehat{K}(u)\phi(u) \sum^{\infty}_{j=[Mx \ell(x)]+1}  (e^{\iota  u a_j x}-1- \iota u a_j x)du \bigg| \nonumber\\
	&\quad\leq c_3\, \limsup_{x\to\infty} \frac{1}{x \ell(x)} \sum^{\infty}_{j=[M x\ell(x)]+1}  a^{2}_{j} x^{2} 
	\leq c_4\, \limsup_{x\to\infty} \frac{(M x \ell(x)   )^{-1}\ell^{2}(Mx \ell(x)) x^{2}}{x\ell(x)}
	=c_4\, M^{-1},  
\end{align*} 
where in the last equality we used $\lim_{x\to\infty} \ell(x\ell^{1/\beta}(x))/\ell(x)=1$.
Moreover, we also comment that for $\beta>1$, $C^{\pm}_K$ in (\ref{etak}) is finite, and that for $\beta=1, K^{\prime}_{\infty}(0) = 0$,
\begin{align*}
	\int^{\infty}_{0}\big(K_{\infty}(\pm t^{-1})-K_{\infty}(0)\big)dt
	&= \frac{1}{2 \pi} \int^{\infty}_{0} \int_{\R} \widehat{K}(u) \phi(u)(e^{\pm \iota u t^{-1}} -1) du\\
	&= \frac{1}{2 \pi} \int^{\infty}_{0} \int_{\R} \widehat{K}(u) \phi(u)(e^{\pm \iota u t^{-1}} -1 \mp \iota u t^{-1}) du < \infty.
\end{align*}

\noindent
{\bf Step 2} We next show that $\eta_K(\varepsilon_1)$ is in the domain of attraction of a normal distribution. That is,
\begin{align} \label{attraction}
\lim\limits_{x\to\infty} \frac{(R^{\leftarrow}_{\beta}(x))^{\alpha}}{h(R^{\leftarrow}_{\beta}(x))}\P\big(\eta_K(\varepsilon_1)>x\big)=\gamma_2\quad\text{and}\quad \lim\limits_{x\to\infty} \frac{(R^{\leftarrow}_{\beta}(x))^{\alpha}}{h(R^{\leftarrow}_{\beta}(x))}\P\big(\eta_K(\varepsilon_1)<-x\big)=\gamma_1,
\end{align}
where $R_{\beta, \ell, \beta}(x) = x^{\frac{1}{\beta}} \ell^{\frac{1}{\beta}}(x^{\frac{1}{\beta}})$ and $R^{\leftarrow}_{\beta}(x) = R^{\leftarrow}_{\beta, \ell, \beta}(x)$ are defined in \eqref{RInv}, 
\[
\gamma_2=\sigma_2 (C^+_K)^{2} 1_{\{C^+_K>0\}}+\sigma_1 (C^-_K)^{2} 1_{\{C^-_K>0\}}\;\; \text{and}\;\; \gamma_1=\sigma_2 |C^+_K|^{2} 1_{\{C^+_K<0\}}+\sigma_1 |C^-_K|^{2} 1_{\{C^-_K<0\}}.
\] 
\cite[Proposition 1.5.7 and Theorem 1.5.12]{BGT89} imply that $\frac{(R^{\leftarrow}_{\beta}(x))^{\alpha}}{h(R^{\leftarrow}_{\beta}(x))}$ is a regularly varying function at $\infty$ with index $2$. By assumption ({\bf T1}), there exists $A>0$ such that the regularly varying function $R_{\beta, \ell, \beta}(x)$ is continuous and strictly increasing on the interval $(A,\infty)$. So, with the same proof as Step 2 in \cite[Proposition 3.4]{LXX}, we could easily get \eqref{attraction}.

\noindent
{\bf Step 3}  By (\ref{attraction}), we could easily get that as $u\to 0$,
\begin{align*}
\ln\mathbb{E}[e^{\iota u \eta_k(\varepsilon_1)}]=-(\gamma_1+\gamma_2+o(1))|u|^2\overline{H}(|u|^{-1}),
\end{align*}
where $\overline{H}(t)=-\int^t_0 x^2 d \frac{h(R^{\leftarrow}_{\beta}(x))}{(R^{\leftarrow}_{\beta}(x))^{\alpha}}$. Therefore, for any $u\in\mathbb{R}$, 
\begin{align*}
\ln \mathbb{E}[e^{\iota u N^{-\frac{1}{2}}\overline{H}^{-\frac{1}{2}}_2(N) \widetilde{T}_{N}}]
&=-(\gamma_1+\gamma_2+o(1))|u|^2 \overline{H}(|u N^{-\frac{1}{2}}\overline{H}^{-\frac{1}{2}}_2(N)|^{-1})\overline{H}^{-1}_2(N)\\
&=-(\gamma_1+\gamma_2+o(1))|u|^2 
\end{align*}
as $N\to\infty$. This completes the proof.
\end{proof}

\medskip
\noindent
{\bf Proof of Theorem \ref{thm2}}:  Recall that $\widetilde{T}_{N}$ is a finite sum of i.i.d. random variables in the domain of attraction of the normal distribution. The process $\{\widetilde{T}_{[Nt]} : t \geq 0\}$ thus has independent increments. Using characteristic functions and Proposition \ref{prop52}, we can obtain that as $N\to\infty$,
	\begin{align}\label{F-T}
	\big\{N^{-\frac{1}{2}}\overline{H}^{-\frac{1}{2}}_2(N) \widetilde{T}_{[Nt]}: t\geq 0\big\} \overset{\rm f.d.d.}{\longrightarrow} \big\{ \ga W_t:\; t\geq 0 \big\}.
	\end{align}

    For any $t>0$, by Proposition \ref{prop31} and Lemma \ref{lm51}, $
	N^{-\frac{1}{2}}\overline{H}^{-\frac{1}{2}}_2(N) (S_{[Nt]} - \widetilde{T}_{[Nt]})$
	converges to 0 in $L_1$ norm as $N\to\infty$. Combining with \eqref{F-T}, the convergence of finite-dimensional distributions in Theorem \ref{thm2} follows.

\section{Proof of Theorem \ref{thm4}} \label{S:thm4}
In the case $\alpha=2, \beta=1, K^{\prime}_{\infty}(0) \neq 0$, by Proposition \ref{prop31} we see that the partial sum $S_N$ is well approximated by $T_{N}$. In this subsection, we will apply the Lindeberg-Feller CLT for $T_N$ to show Theorem \ref{thm4}. We start with the following lemma studying the slowly varying function $L$ at $\infty$ under the assumptions (\textbf{T1}) and (\textbf{T2}). It will be used in the proof of Proposition \ref{prop38}.
\begin{lemma}\label{lem51}
	Under assumptions {\rm(\textbf{T1})} and {\rm(\textbf{T2})}, $\lim_{x \rightarrow \infty} \frac{L(x L(x))}{L(x)} = 1$ and $\lim_{x \rightarrow \infty} L(x) = \infty$.
\end{lemma}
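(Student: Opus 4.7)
The plan is to deduce both conclusions primarily from (T2) via the theory of regularly varying functions; assumption (T1) is inherited from the hypotheses of Theorem~\ref{thm4} but plays no active role here (slow variation of $L$ in $x$ is already built in via the Karamata-type identity $L(N)=\sum_{j=1}^{N}j^{-1}\ell(j)$). The key observation is that (T2) is exactly the statement that $L^*(y):=L(e^y)$ is regularly varying in $y$ at infinity, with a \emph{positive} index.

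More precisely, I would first rewrite (T2) as $L^*(\lambda y)/L^*(y)\to g_L(\lambda)<1$ for $\lambda\in(0,1)$, extend the limit to $\lambda>1$ by reciprocals (yielding $1/g_L(1/\lambda)$), and then invoke the Karamata characterization theorem \cite[Theorem~1.4.1]{BGT89}: $L^*$ must be regularly varying and $g_L(\lambda)=\lambda^\rho$ for some $\rho\in\mathbb{R}$. The strict inequality $g_L(\lambda)<1$ on $(0,1)$ then forces $\rho>0$. The divergence $L(x)\to\infty$ follows immediately: $L$ is non-decreasing (it is the partial sum of the positive terms $j^{-1}\ell(j)$) and thus has a limit in $(0,\infty]$; a finite limit would give $L^*(\lambda y)/L^*(y)\to 1$, contradicting $g_L(\lambda)<1$.

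For the main assertion $L(xL(x))/L(x)\to 1$, regular variation of $L^*$ with index $\rho>0$ yields $\log L^*(y)\sim\rho\log y=o(y)$, which translates back to $\log L(x)=o(\log x)$. Setting $y=\log x$ one can then write
\[
\frac{L(xL(x))}{L(x)}=\frac{L^*(u_x y)}{L^*(y)},\qquad u_x:=1+\frac{\log L(x)}{\log x}\longrightarrow 1.
\]
Since $u_x$ eventually lies in a compact subset of $(0,\infty)$ (say $[1/2,2]$), the uniform convergence theorem for regularly varying functions \cite[Theorem~1.5.2]{BGT89} applied on that compact set gives $L^*(u_x y)/L^*(y)\to 1^{\rho}=1$, which is the claim.

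The main obstacle, and essentially the only non-routine step, is the invocation of the Karamata characterization theorem to upgrade the pointwise limit condition (T2) into genuine regular variation of $L^*$; once this is in hand, the remaining arguments are standard applications of the uniform convergence theorem for regularly varying functions, and the two conclusions drop out almost automatically.
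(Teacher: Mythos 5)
Your argument for the second claim ($L(x)\to\infty$) is fine, but the core of your argument for the first claim rests on a step that is not justified: assumption ({\bf T2}) does \emph{not} force $L^*(y):=L(e^y)$ to be regularly varying. The Karamata characterization theorem requires the limit $g_L(\lambda)$ to be positive and finite on a set of positive measure, whereas ({\bf T2}) only imposes $g_L(\lambda)<1$ (note that the paper demands $g_h>0$ but conspicuously not $g_L>0$). The case $g_L\equiv 0$ is genuinely admissible: take $\ell(x)=\tfrac14(\ln x)^{-3/4}e^{(\ln x)^{1/4}}$, which satisfies ({\bf T1}) with $\gamma=3/4$ and yields $L(x)\sim e^{(\ln x)^{1/4}}$, so that $L^*(y)=e^{y^{1/4}}$ satisfies $L^*(\lambda y)/L^*(y)\to 0$ for every $\lambda\in(0,1)$ and is rapidly, not regularly, varying. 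For such $L$ there is no index $\rho$, no asymptotic $\log L^*(y)\sim\rho\log y$, and no uniform convergence theorem to invoke, so your chain $u_x\to1\Rightarrow L^*(u_xy)/L^*(y)\to1$ collapses. That implication is in fact delicate: for the slowly varying $L(x)=e^{(\ln x)^{1/2}}$ one has $u_x=1+\log L(x)/\log x\to1$ and yet $L(xL(x))/L(x)\to e^{1/2}\neq1$, which shows the conclusion itself can fail without a quantitative hypothesis on how fast $u_x\to1$ relative to the modulus of slow variation of $L$.

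This is also why your opening assertion that ({\bf T1}) ``plays no active role'' is wrong: ({\bf T1}), and specifically the requirement $\gamma>1/2$, is precisely what the paper uses to prove the first claim (the example $L(x)=e^{(\ln x)^{1/2}}$ above corresponds to $\gamma=1/2$ and is excluded by it). The paper's route is entirely different from yours: writing $L(x)=Ce^{\int_1^x\ell(t)/(tL(t))\,dt}$ and integrating by parts, ({\bf T1}) gives $\ell(x)\ln x\le L(x)\big(1+(\ln x)^{1-\gamma}\big)$, whence $\big|\tfrac{L(2x)}{L(x)}-1\big|\,|\ln L(x)|\to0$, and the conclusion $L(xL(x))/L(x)\to1$ follows from \cite[Theorem~2.3.3]{BGT89} exactly as in the proof of Lemma~\ref{lem42}; assumption ({\bf T2}) enters only to get $L(x)\to\infty$. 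To repair your proof you would either have to restrict to the subcase $g_L>0$ (which does not cover the lemma as stated) or supply a separate quantitative argument in the spirit of the paper's.
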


\begin{proof}
	Since $j^{-1}\ell(j)$ is the discrete derivative of $L(j)$,
	it is not hard to see that there exists a constant $C$ such that $L(x)\sim\widetilde{L}(x):=C+\int_{1}^{x} \frac{\ell(t)}{t} dt$.
	Without loss of generality, we write $L(x) = C+ \int_{1}^{x} \frac{\ell(t)}{t} dt$.
	Using $(\ln L(x))' = \frac{\ell(x)}{xL(x)}$, we have $L(x) = C e^{\int_{1}^{x} \frac{\ell(t)}{t L(t)} dt}$.

   Under assumption (\textbf{T1}), for $x>1$,
   \begin{align*}
   	\ell(x) \ln{x}  &= \int_{1}^{x} \frac{\ell(t)}{t} dt + \int_{1}^{x} \log{t}\hspace{1pt} d\ell(t) = \int_{1}^{x} \frac{\ell(t)}{t} dt + \int_{1}^{x} \frac{\ell(t)}{t} \eta(t) \ln{t} \hspace{1pt} dt \\
   	&\leq \begin{cases}
   	L(x) + L(x) (\ln{x})^{1-\gamma} & \text{ if } \eta(t) \leq (\ln{t})^{-\gamma} \quad \text{with $1/2 < \gamma \leq 1$} \vspace{3pt} \\ 
   	L(x)(1+o(1))	& \text{ if } \eta(t) \leq(\ln{t})^{-\gamma} \quad \text{with $\gamma > 1$.}
   	\end{cases}
   \end{align*}
Hence, by the proof of Lemma \ref{lem42}, $\lim_{x \rightarrow \infty} L(x L(x))/L(x)= 1$.

Recall that $L(1)=a_1>0$ and $L(x)$ is increasing. By assumption (\textbf{T2}), it is easy to see from iteration that $\lim_{x \rightarrow \infty} L(x) = \infty$.
\end{proof}

We now show the weak convergence of $T_N$.
\begin{proposition} \label{prop38} Under the assumptions of Theorem \ref{thm4}, as $N\to\infty$,
	\begin{align*}
	N^{-\frac{1}{2}} H_{2}^{-\frac{1}{2}}(N) L^{-1}(N)\, T_{N} \overset{\mathcal{L}}{\rightarrow}  \sqrt{c_{L, h} (\si_1+\si_2)}\, |K_{\infty}^{\prime}(0)| \, W_1
	\end{align*}
	where $c_{L, h}$, $\si_1$ and $\si_2$ are the same as those in Theorem~\ref{thm4}.
\end{proposition}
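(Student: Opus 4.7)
The plan is to apply the Lindeberg-Feller CLT to $\mathrm{I}_{N,1}$ after truncating the innovations at the natural CLT scale.  Recalling the decomposition $T_N=\mathrm{I}_{N,1}+\mathrm{I}_{N,2}-\mathrm{I}_{N,3}$ from \eqref{T3}, the key structural point is that
\[
\mathrm{I}_{N,1}=\sum_{n=1}^N Z_{N,n},\qquad Z_{N,n}:=\sum_{j=1}^{N}\bigl[K_\infty(a_j\varepsilon_n)-\mathbb{E}K_\infty(a_j\varepsilon_n)\bigr],
\]
is a sum of i.i.d.\ random variables, since each $Z_{N,n}$ depends only on $\varepsilon_n$.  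The reduction $A_N^{-1}(\mathrm{I}_{N,2}-\mathrm{I}_{N,3})\to 0$ in probability then follows by adapting Propositions~\ref{prop32} and~\ref{prop34} to $\alpha=2$, since the polynomial-order bounds obtained there are easily beaten by $A_N^2=N H_2(N)L^2(N)$.

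Next I would analyze $Z_{N,n}$ through the deterministic function $f_N(x):=\sum_{j=1}^{N}[K_\infty(a_jx)-K_\infty(0)]$, so that $Z_{N,n}=f_N(\varepsilon_n)-\mathbb{E}f_N(\varepsilon_n)$.  Splitting the $j$-sum at the crossover index $j^*(|x|)\asymp|x|\ell(|x|)$ defined by $a_{j^*(|x|)}|x|\asymp 1$, a first-order Taylor expansion on the ``linear block'' $\{j>j^*(|x|)\}$ combined with the boundedness of $K_\infty$ on the ``saturated block'' and Lemma~\ref{lem51} gives, for $1\leq|x|\leq N/\ell(N)$,
\[
f_N(x)=K'_\infty(0)\,x\,\bigl(L(N)-L(|x|)\bigr)+O\bigl(|x|\ell(|x|)+1\bigr),
\]
while $|f_N(x)|\leq cN$ for $|x|>N/\ell(N)$, where the approximation saturates.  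The centering $\mathbb{E}f_N(\varepsilon_1)$ is easily shown to be $O(1)$ using $\mathbb{E}\varepsilon_1=0$ and the summability of $\sum_j a_j^2 H(1/a_j)$.

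I would then truncate at $a_N^*:=N^{1/2}H_2^{1/2}(N)$ (so $A_N=a_N^*\,L(N)$) by writing $Z_{N,n}=Z_{N,n}^{\leq}+Z_{N,n}^{>}$, where $Z_{N,n}^{\leq}$ and $Z_{N,n}^{>}$ are the centered restrictions of $f_N(\varepsilon_n)$ to $\{|\varepsilon_n|\leq a_N^*\}$ and its complement.  The estimate $N\,\mathbb{P}(|\varepsilon_1|>a_N^*)\sim(\sigma_1+\sigma_2)\,h(a_N^*)/H_2(N)\to 0$ (which holds because $H_2(N)\sim H(a_N^*)\gg h(a_N^*)$ under the assumptions), combined with $|f_N|\lesssim N$, gives $A_N^{-1}\sum_n Z_{N,n}^{>}\to 0$ in probability.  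For the truncated part, inserting the expansion of $f_N$ together with the tail $\mathbb{P}(|\varepsilon_1|>x)\sim(\sigma_1+\sigma_2)x^{-2}h(x)$ (equivalently $\mathbb{E}[\varepsilon_1^2\mathbf{1}_{|\varepsilon_1|\leq t}]\sim(\sigma_1+\sigma_2)H(t)$ with $H'(t)\sim 2h(t)/t$), an integration by parts yields
\[
\Var(Z_{N,1}^{\leq})\sim 2(K'_\infty(0))^2(\sigma_1+\sigma_2)\int_1^{a_N^*}\bigl(L(N)-L(x)\bigr)^2\frac{h(x)}{x}\,dx.
\]
The substitution $x=N^\lambda$ with $dx/x=\ln N\,d\lambda$, assumption \textbf{(T2)}, and $\ln a_N^*/\ln N\to\tfrac12$ then convert this, via dominated convergence, into $L^2(N)h(N)\ln N\cdot\int_0^{1/2}(1-g_L(\lambda))^2 g_h(\lambda)\,d\lambda$.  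An entirely analogous calculation applied to $H_2(N)\sim H(a_N^*)\sim 2\int_0^{a_N^*}h(s)/s\,ds$ gives $H_2(N)\sim 2h(N)\ln N\cdot\int_0^{1/2} g_h(\lambda)\,d\lambda$; the ratio of the two integrals is precisely $c_{L,h}$, so $N\,\Var(Z_{N,1}^{\leq})/A_N^2\to(K'_\infty(0))^2(\sigma_1+\sigma_2)c_{L,h}$.

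To verify the Lindeberg condition I would use a Lyapunov-type bound: on its support, $|Z_{N,1}^{\leq}|\lesssim|\varepsilon_1|L(N)\leq a_N^*L(N)=A_N$, and for any fixed $p>2$ the moment estimate $\mathbb{E}[|\varepsilon_1|^p\mathbf{1}_{|\varepsilon_1|\leq a_N^*}]\lesssim h(a_N^*)(a_N^*)^{p-2}$ leads to $N\,\mathbb{E}|Z_{N,1}^{\leq}|^p/A_N^p\lesssim h(a_N^*)/H_2(N)\to 0$.  Lindeberg-Feller then delivers the claimed weak convergence.  The hardest step is the variance computation: one must track four slowly varying functions ($L,\ell,h,H_2$) simultaneously and justify dominated convergence when passing to the limit.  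In particular, the factor $\tfrac12$ in the limits of integration (and hence the precise constant $c_{L,h}$) arises precisely because the truncation is at $a_N^*$ rather than $N/\ell(N)$---without truncation the variance integral would range over $\lambda\in(0,1)$ and produce the wrong constant.
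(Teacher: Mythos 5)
Your proposal is correct and follows essentially the same route as the paper's proof: reduce to the i.i.d.\ sum $\mathrm{I}_{N,1}$, establish the asymptotic $K'_\infty(0)\,x\,(L(N)-L(|x|))$ for the summand (the paper phrases this as $\tilde\eta_K(x)\sim \mp K'_\infty(0)|x|L(|x|)$), truncate at $N^{1/2}H_2^{1/2}(N)$, and extract $c_{L,h}$ from the variance via {\rm(\bf T2)} on the logarithmic scale before applying Lindeberg--Feller. The only (cosmetic) differences are that you compute the variance by integration by parts and the substitution $x=N^\lambda$ where the paper slices $|\hat\eps_1|$ into intervals $(N^y,N^{y+\upsilon}]$ and takes a Riemann sum, and you verify Lindeberg via a Lyapunov bound where the paper notes the Lindeberg event is contained in $\{|\hat\eps_1|>N^{1/2-\upsilon_0}\}$.
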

\begin{proof}
Recall from (\ref{T3}) that $\xi_{n,j}=K_\infty(a_j\eps_n)-\E K_\infty(a_j\eps_n)$ and
\begin{align*}
T_{N} = \text{I}_{N,1} + \text{I}_{N,2} - \text{I}_{N,3}
=\sum^N_{n=1}\sum^{N}_{j=1} \xi_{n,j}+\sum^0_{n=-\infty}\sum^{N-n}_{j=1-n} \xi_{n,j} -\sum^N_{n=1}\sum^{N}_{j=N-n+1} \xi_{n,j}.
\end{align*}
Applyiping the same proofs for Propositions~\ref{prop32}~and~\ref{prop34}, we can obtain that $A_N^{-1}(\text{I}_{N,2}-\text{I}_{N,3})\overset{\P}{\longrightarrow}0$ as $N\to\infty$, where $A_N=N^{\frac{1}{2}} H_{2}^{\frac{1}{2}}(N) L(N)$. It thus suffices to consider the limit of $A_N^{-1}\text{I}_{N,1}$.

Note that the Taylor expansion as used in (\ref{Ksum}) implies that for any $\al'\in(1,2)$,
\begin{align} \label{lipschitz1}
	|K_{\infty}(a_j x)-\E K_{\infty}(a_j \varepsilon_1) - K_{\infty}^{\prime}(0)a_{j}x |\leq c_1\, (|a_j x|^{\alpha'}+|a_j|^{\alpha'} ),
\end{align}
and the function in the left-hand side is Lipschitz in $x$.
As the above upper bound is summable in $j$, with $\zeta_{n,j}$ in (\ref{zeta}), we can rewrite
\begin{align*} 
	\text{I}_{N,1} =\text{I}_{N,1,1} - \text{I}_{N,1,2} + \text{I}_{N,1,3}:=
	\sum_{n=1}^{N} \sum_{j=1}^{\infty}  \zeta_{n,j} 
	- \sum_{n=1}^{N}  \sum_{j=N+1}^{\infty} \zeta_{n,j} 
	+ \sum_{n=1}^{N} \sum_{n=1}^{N}  K_{\infty}^{\prime}(0)a_{j}\varepsilon_{n}.
\end{align*}
Since the same proof as in Proposition \ref{prop32} implies that 
$\mathbb{E}|\text{I}_{N,1,2}| \leq c_2 N^{2-\delta}$ for $\delta \in (1,2)$,
\begin{align*}
	\lim_{N\to\infty} A_N^{-1}\mathbb{E}|\text{I}_{N,1,2}| =\lim_{N\to\infty} N^{-\frac{1}{2}} H_{2}^{-\frac{1}{2}}(N) L^{-1}(N) \mathbb{E}|\text{I}_{N,1,2}|=0.
\end{align*}
Observe that $\text{I}_{N,1,3}=\sum_{n=1}^{N}K'_\infty(0)L(N)\eps_n$. Denoting
\begin{align} \label{etak1x}
	\tilde\eta_K(x):=\sum\limits^{\infty}_{j=1}\zeta_{n,j}=\sum\limits^{\infty}_{j=1} \big(K_{\infty}(a_j x)-\E K_{\infty}(a_j \varepsilon_1) - K_{\infty}^{\prime}(0)a_{j}x\big),
\end{align}
we can write $\text{I}_{N,1,1}+\text{I}_{N,1,3}=\sum_{n=1}^{N}\big( \tilde\eta(\eps_n)+ K'_\infty(0)L(N)\eps_n \big)$.
The convergence to a normal distribution of such triangular array can then be obtained by the Lindeberg-Feller CLT, once we know the asymptotic behavior of the function $\tilde\eta_K$.

\noindent
\textbf{Step 1} 
We first show that
\begin{equation}\label{etak1}
	\lim_{x \rightarrow \pm \infty} \frac{1}{|x| L(|x|)} \tilde\eta_{K}(x) = \mp K^{\prime}_{\infty}(0). 
\end{equation}
By symmetry, we only consider $x>0$.
Fix a large constant $M>0$.
Since $K_\infty$ is bounded,
\begin{align}\label{tieta1}
	\limsup_{x \to \infty}\frac{1}{x L(x)} \sum_{j=1}^{[x L(x) /M]}  \big|K_{\infty}\left(a_{j} x\right)-\mathbb{E} K_{\infty}\left(a_{j} x\right)\big| \leq 	c_3 \limsup_{x \to \infty}\frac{x L(x) /M}{x L(x)} = \frac{c_3}{M}.
\end{align}
By Lemma~\ref{lem51}, we have $\lim_{x \rightarrow \infty} L(x L(x))/L(x)=1$ and thereby
\begin{align}\label{tieta2}
	\lim_{x \to \infty}\frac{1}{x L(x)} \sum_{j=1}^{[x L(x) /M]} -K_{\infty}^{\prime}(0)a_{j}x = -K_{\infty}^{\prime}(0)\lim_{x \to \infty}  \frac{L\big([x L(x)/M]\big)}{L(x)} = -K_{\infty}^{\prime}(0).
\end{align}
For the remaining sum, the Taylor expansion and \cite[Proposition 1.5.8]{BGT89} yield
\begin{align}
	&\limsup_{x \rightarrow \infty} \frac{1}{x L(x)}\sum_{j=[x L(x) /M]+1}^{\infty} \big| K_{\infty}\left(a_{j} x\right)- K_{\infty}(0) - K_{\infty}^{\prime}(0)a_{j}x \big| \notag \\ 
	&\quad\leq c_4 \limsup_{x \rightarrow \infty} \frac{1}{x L(x)} \sum_{j=[x L(x) /M]}^{\infty} |a_{j} x|^2 \notag
	\leq c_5\limsup_{x \rightarrow \infty} \frac{x^2}{x L(x)} \frac{\ell^{2}(x L(x) /M)}{x L(x) /M}   \notag \vspace{5pt} \\
	&\quad =  c_5 \limsup_{x \rightarrow \infty}  \hspace{1pt} M\hspace{1pt} \frac{\ell^{2}(x L(x))}{L^{2}(x)} =0, \label{Formula1.2}
\end{align}
where we used $\lim_{x \rightarrow \infty} L(x L(x))/L(x)=1$ and $\lim_{x \rightarrow \infty} \ell(x)/L(x)=0$ in the last equality.

Lastly, for any $\al'\in(1,2)$, using Plancherel formula, (\ref{lipschitz1}) and $\sum_{j=1}^\infty |a_j|^{\al'}<\infty$,
\begin{align}\label{tieta4}
	 \limsup_{x \rightarrow \infty}\frac{1}{x L(x)} \sum_{j=[x L(x) /M]+1}^{\infty} \big|K_{\infty}(0)-\mathbb{E} K_{\infty}\left(a_{j} \varepsilon_{n}\right)\big| \leq c_6 \limsup_{x \rightarrow \infty}\frac{1}{x L(x)}\sum\limits_{j=1}^\infty  |a_j|^{\al'}=0.
\end{align}

Then, (\ref{etak1}) follows by combining (\ref{tieta1})-(\ref{tieta4}) and sending $M\to\infty$.

\noindent
\textbf{Step 2} To obtain the weak limit of $A_N^{-1}(\text{I}_{N,1,1} + \text{I}_{N,1,3})$, we start with the truncation of $\eps_n$ at $N_{2,H_2}=N^{\frac{1}{2}} H_{2}^{\frac{1}{2}}(N)$.
Letting $\hat\eps_n:=\eps_n1_{\{|\eps_n|\leq N_{2,H_2}\}}$, by (\ref{h}), we have
\begin{align*}
	\lim_{N \rightarrow \infty} \sum_{n=1}^{N} \mathbb{P}(\eps_n\neq \hat\eps_n) 
	=\lim_{N \rightarrow \infty} N \mathbb{P}(|\eps_1|>N_{2,H_2}) 
	= (\sigma_1+\sigma_2) \lim_{N \rightarrow \infty} \frac{h(N^{\frac{1}{2}} H_{2}^{\frac{1}{2}}(N))}{H(N^{\frac{1}{2}} H_{2}^{\frac{1}{2}}(N))}
	= 0.
\end{align*}

We then check the two conditions of the Lindeberg-Feller CLT for
\begin{align}\label{trun}
A_N^{-1}\sum\limits_{n=1}^{N}  \big( \tilde\eta_{K}(\hat\varepsilon_{n}) + K^{\prime}_{\infty}(0) L(N) \hat{\varepsilon}_{n} \big).
\end{align}
We evaluate its variance by estimating the first and second moments.
For the first moment, by the definition (\ref{etak1x}) of $\tilde\eta_K$, we can rewrite $\tilde\eta_{K}(\hat\varepsilon_{n}) + K^{\prime}_{\infty}(0) L(N) \hat{\varepsilon}_{n}$ as
\begin{align}\label{1mt}
	\sum_{j=1}^{N} \big(K_\infty(a_j\hat\eps_n)-\E K_\infty(a_j\eps_n) \big) + \sum_{j=N+1}^{\infty} \big(K_\infty(a_j\hat\eps_n)-\E K_\infty(a_j\eps_n) -K'_\infty(0)L(N)\hat\eps_n \big).
\end{align}
Hence, by the Lipschitz continuity of $K_\infty$, (\ref{lipschitz1}) and (\ref{h}),
\begin{align*}
\lim_{N\to\infty}A_N^{-1} \Big|\mathbb{E}\big[\sum\limits_{n=1}^{N}  \big( \tilde\eta_{K}(\hat\varepsilon_{n}) + K^{\prime}_{\infty}(0) L(N) \hat{\varepsilon}_{n} \big)\big]\Big|
&\leq c_6 \lim_{N\to\infty} A_N^{-1} N  \big[\sum_{j=1}^{N} |a_j| \mathbb{E}[|\hat\eps_n-\eps_n|+\sum_{j=N+1}^{\infty} |a_j|^{\alpha'}\big]\\
&\leq c_7 \lim_{N\to\infty} N^{-1}_{2, H_2}h(N_{2,H_2})=0,
\end{align*}
where in the last equality we used $H_2(N)\sim H(N_{2,H_2})$ and $\lim\limits_{x\to\infty}h(x)/H(x)=0$.
Thus the first moment of (\ref{trun}) converges to 0. For the second moment of (\ref{trun}), we will estimate in each piece $|\hat\eps_n|\in I_N^{y,\upsilon}:=(N^y,N^{y+\upsilon}]$ with $\upsilon$ small, and then apply the Riemann sum approximation.
Denote $D_N^{y,\upsilon}:=\{|\hat\eps_1|\in I_N^{y,\upsilon}\}$.
For any $y\in(0,1/2)$, by the layer cake representation and (\ref{h}), as $N\to\infty$,
\begin{align*}
	&\E\big[\hat\eps_1^{\hspace{1pt}2}\hspace{1pt} 1_{D_N^{y,\upsilon}}\big]
	=2\int_0^\infty x \P\big(|\hat\eps_1| 1_{\{\hat\eps_1\in I_N^{y,\upsilon}\}}>x\big) dx \notag\\ 
	&\quad=(\sigma_1+\sigma_2+o(1))\Big(2 \int_{N^y}^{N^{y+\upsilon}} \frac{h(x)}{x}dx + h(N^{y}) -h(N^{y+\upsilon}) \Big).
\end{align*}
By a change of variable, the bounded convergence theorem and assumption ({\bf T2}),
\begin{align*}
	\lim_{N\to\infty}\frac{1}{h(N)\ln N}\int_{N^y}^{N^{y+\upsilon}} \frac{h(x)}{x}dx
	= \int_{y}^{y+\upsilon}\lim_{N\to\infty} \frac{h(N^x)}{h(N)}dx 
	= \int_{y}^{y+\upsilon} g_h(x)dx.
\end{align*}
Thus, we have
\begin{align} \label{2mt1}
	\lim_{N\to\infty}\frac{1}{h(N)\ln N} \E\big[\hat\eps^{\hspace{1pt}2}_1\hspace{1pt} 1_{D_N^{y,\upsilon}}\big] = 2(\si_1+\si_2)\int_{y}^{y+\upsilon} g_h(x)dx.
\end{align}
Concatenating the intervals $I_N^{y,\upsilon}$ to the whole domain, we see that as $N\to\infty$, 
\begin{align} \label{2mt4}
	 H_2(N)\sim H(N^{\frac{1}{2}}H^{\frac{1}{2}}_2(N)) \sim(\si_1+\si_2)^{-1}\E[\hat\eps_1^{\hspace{1pt}2}] 
	 \sim 2h(N)\ln N\int_0^{\frac{1}{2}}g_h(x)dx   .
\end{align}
We then use (\ref{etak1}) to replace $\tilde\eta_K(\hat\eps_1)$ by $-K'_\infty(0)L(N^y)\hat\eps_1$ on $D_N^{y,\upsilon}$ for small $\upsilon>0$.
Precisely, fixing $\upsilon_0\in (0,1/2)$ and taking $(y,y+\upsilon)\in[\upsilon_0,1/2-\upsilon_0]$, similar calculation as for (\ref{2mt1}) yields
\begin{align}\label{2mt2}
	\lim_{N\to\infty}\frac{\E\big[\big( L(N)\hat\eps_1 - L(N^y)\hat\eps_1 \big)^2 1_{D_N^{y,\upsilon}} \big]}{L^2(N) h(N)\ln N} 
	= 2(\si_1+\si_2)\int_{y}^{y+\upsilon} (1-g_L(y))^2 g_h(x)dx,
\end{align}
where we used $\lim_{N\to\infty} L(N^y)/L(N)=g_L(y)$ by ({\bf T2}).
To bound the remainder, we note that for any $\delta>0$ and $x,y\in[\upsilon_0,1/2-\upsilon_0]$, by uniform continuity there exists $\upsilon>0$ such that $|g_L(y)-g_L(x)|\leq\delta$ as long as $|x-y|<\upsilon$.
Therefore, for $|u|\in I_N^{y,y+\upsilon}$ and $x$ satisfying $N^x=|u|$,
\begin{align*}
	&\lim_{N\to\infty} \Big| \frac{\big(\tilde\eta_K(u)+K'_\infty(0)L(N)u\big) - K'_\infty(0)\big(- L(N^y)u+L(N)u\big) }{L(N) u} \Big| \\
	&\quad= |K'_\infty(0)|\lim_{N\to\infty}  \frac{|L(u) (1+o(1))-L(N^y)|}{L(N)}  = |K'_\infty(0)||g_L(y)-g_L(x)|\leq |K'_\infty(0)|\delta.
\end{align*}
Using (\ref{etak1}) and the monotonicity of $L$, we can similarly obtain
\begin{align*}
	&\limsup_{N\to\infty} \Big|  \frac{\big(\tilde\eta_K(u)+K'_\infty(0)L(N)u\big) + K'_\infty(0)\big(- L(N^y)u+L(N)u\big) }{L(N) u} \Big|\leq c_8.
\end{align*}
The above two bounds and (\ref{2mt1}) give 
\begin{align}\label{2mt3}
	&\limsup_{N\to\infty}\Big|\frac{\E\big[\big( \tilde\eta_K(\hat\eps_1) + K'_\infty(0)L(N)\hat\eps_1 \big)^2 1_{D_N^{y,\upsilon}} \big]}{L^2(N) h(N)\ln N} - 
	 \frac{\E\big[|K^{'}_\infty(0)|^2\big( L(N)\hat\eps_1 - L(N^y)\hat\eps_1 \big)^2 1_{D_N^{y,\upsilon}} \big]}{L^2(N) h(N)\ln N}\Big| \notag\\ 
	&\hspace{6.6cm} \leq c_9\, \delta \limsup_{N\to\infty} \frac{\E\big[\hat\eps_1^{\hspace{1pt}2} 1_{D_N^{y,\upsilon}}\big]}{h(N)\ln N} 
	\leq c_{10}\, \delta \int_y^{y+\upsilon} g_h(x)dx.
\end{align}
Note that $\delta$ can be made arbitrary small as $\upsilon\downarrow0$.
Using (\ref{2mt2}) and (\ref{2mt3}), and concatenating the intervals to $I_N^{\upsilon_0}:=[N^{\upsilon_0},N^{\frac{1}{2}-\upsilon_0}]$, the standard Riemann sum approximation implies
\begin{align}\label{2mt5}
	\lim_{N\to\infty}\frac{\E\big[\big| \tilde{\eta}(\hat{\eps}_1) + K'_\infty(0)L(N)\hat \eps_1 \big|^2 1_{\{\hat\eps_1\in I_N^{\upsilon_0}\}} \big]}{L^2(N) h(N)\ln N} 
	= (\si_1+\si_2)\int_{\upsilon_0}^{\frac{1}{2}-\upsilon_0} (1-g_L(x))^2 g_h(x)dx.
\end{align}
In the remaining domain, if $|\eps_1|<N^{\upsilon_0}$, then again by (\ref{etak1}), 
\begin{align}\label{2mt6}
	\limsup_{N\to\infty}\frac{\E\big[\big| \tilde{\eta}(\hat{\eps}_1) + K'_\infty(0) L(N)\hat\eps_1 \big|^2 1_{\{|\hat\eps_1|< N^{\upsilon_0}\}} \big]}{L^2(N) h(N)\ln N} 
	\leq  c_{11}\int_0^{\upsilon_0}  g_h(x)dx.
\end{align}
For $N^{1/2-\upsilon_0}<|\hat\eps_1|$, since $N_{2,H_2}\leq N^{1/2+\upsilon_0}$ for large $N$ and $|\hat\eps_1|\leq N_{2,H_2}$,
\begin{align}\label{lind}
	\limsup_{N\to\infty}\frac{\E\big[\big| \tilde{\eta}(\hat{\eps}_1) + K'_\infty(0) L(N)\hat\eps_1 \big|^2 1_{\{N^{1/2-\upsilon_0}< |\hat\eps_1|\}} \big]}{L^2(N) h(N)\ln N} 
	\leq  c_{12}\int_{\frac{1}{2}-\upsilon_0}^{\frac{1}{2}+\upsilon_0} (1-g_L(x))^2 g_h(x)dx.
\end{align}
Combining (\ref{2mt4}), (\ref{2mt5}), (\ref{2mt6}) and (\ref{lind}), and sending $\upsilon_0\downarrow0$, we obtain the variance
\begin{align*}
	&\lim_{N\to\infty}\frac{N}{A^{2}_{N}}\mathbb{E} \big( \tilde\eta_{K}(\hat\varepsilon_{1}) + K^{\prime}_{\infty}(0) L(N) \hat\varepsilon_{1} \big)^{2}
	= \lim_{N\to\infty}\frac{\E\big( \tilde{\eta}(\hat{\eps}_1) + K'_\infty(0)L(N)\hat\eps_1 \big)^2  }{L^2(N) H_2(N)} \notag \\ 
	&\hspace{3cm} = (\si_1+\si_2)|K^{'}_\infty(0)|^2 \frac{ \int_{0}^{1/2} (1-g_L(x))^2 g_h(x)dx}{ \int_0^{1/2}g_h(x)dx } = (\si_1+\si_2)|K^{'}_\infty(0)|^2 c_{L,h}.
\end{align*}

It remains to verify Lindeberg's condition. For any $\delta>0$ and $\upsilon_0\in (0,1/2)$, by (\ref{etak1}), we have
\begin{align*}
	D_\delta:=\{ | \tilde\eta_{K}(\hat\varepsilon_{1}) + K^{\prime}_{\infty}(0) L(N) \hat\varepsilon_{1}| \geq \delta A_N \} \subset 
	\{|\hat\eps_1|> N^{\frac{1}{2}-\upsilon_0}\}
\end{align*}
when $N$ is large. Thus, by (\ref{lind}) and sending $\upsilon_0\downarrow0$ therein, we have
\begin{align*}
	\lim_{N\to\infty}\frac{N}{A^{2}_{N}}\mathbb{E}\big[ \big( \tilde\eta_{K}(\hat\varepsilon_{1}) + K^{\prime}_{\infty}(0) L(N) \hat\varepsilon_{1}\big)^{2} 1_{D_{\delta}} \big] =0.
\end{align*}
The desired central limit theorem then follows.
\end{proof}


\medskip
\noindent
{\bf Proof of Theorem \ref{thm4}}:  
	(i) For $\al=2$, by the definition of $H$ in (\ref{H}) and Lemma \ref{lem51},
	\[
	\sum_{j=1}^{\infty}a_jH^{\frac{1}{2}}(a_j^{-1}) = \sum_{j=1}^{\infty}j^{-1} \ell(j) H^{\frac{1}{2}}(j \ell^{-1}(j)) \geq c_1\sum_{j=1}^{\infty}j^{-1} \ell(j) = c_1\lim_{x \rightarrow \infty} L(x) = \infty.
	\]
	Hence the linear process exhibits long memory.\\
	(ii) The one-dimensional convergence follows from Propositions~\ref{prop31} and \ref{prop38}.
	To show the convergence in finite-dimensional distributions, similar as before, it suffices to show that for $t_2 > t_1 > 0$, $A_N^{-1} (S_{[Nt_2]}-S_{[Nt_1]})$ and $A_N^{-1} S_{[Nt_1]}$ are asymptotically independent.
	In particular, we only need to show asymptotic increment independence of the process $\{A_N^{-1} \big(\text{I}_{[Nt],1,1}+\text{I}_{[Nt],1,3}\big)\}_{t\geq0}$. That is,
\begin{equation*}\begin{array}{c}
	\displaystyle A_N^{-1} \Big(\sum_{n=1}^{[Nt_1]} \tilde\eta_{K}(\varepsilon_{n})
	+   \sum_{n=1}^{[Nt_1]} K^{\prime}_{\infty}(0) L([Nt_1]) {\varepsilon}_{n} \Big) \quad\text{and} \vspace{5pt}\\ 
	\displaystyle A_N^{-1} \Big(\sum_{n=[Nt_1]+1}^{[Nt_2]} \hspace{-8pt} \tilde\eta_{K}(\varepsilon_{n})
	+   \hspace{-8pt} \sum_{n=[Nt_1]+1}^{[Nt_2]} \hspace{-8pt} K^{\prime}_{\infty}(0)  L([Nt_2]) {\varepsilon}_{n}  + 
	\sum_{n=1}^{[Nt_1]} 	K^{\prime}_{\infty}(0)  \big(L([Nt_2])-L([Nt_1])\big) {\varepsilon}_{n}  \Big),
\end{array}\end{equation*}
asymptotically independent, since all other terms are negligible in the limit.
As $\eps_n$'s are independent, it suffices to show that as $N\to\infty$,
\begin{align}\label{asind}
	A_N^{-1} \sum_{n=1}^{[Nt_1]} 	K^{\prime}_{\infty}(0)  \big(L([Nt_2])-L([Nt_1])\big) {\varepsilon}_{n} \overset{\P}{\longrightarrow}0.
\end{align}
Recall that $A_N=N_{2,H_2}L(N)=N^{\frac{1}{2}}H^{\frac{1}{2}}(N)L(N)$.
Since $\lim_{N\to\infty}\big(L([Nt_2])-L([Nt_1])\big)/L(N)=0$ and $N^{-1}_{2,H_2}\sum_{n=1}^{[Nt_1]}\eps_n$ converges weakly to a normal random variable with variance $t_1$, the convergence in probability (\ref{asind}) holds, and this completes the proof.

\section{Proof of Theorem \ref{thm3}} \label{S:thm3}

In the case $\alpha\beta=2$ with $\beta \geq1$ and  $\sum_{j=1}^{\infty}a_{j}^{\alpha/2} H^{1/2}(a_{j}^{-1}) < \infty$, the linear process $X$ defined in \eref{lp} has short memory.  We first give a good $L_2$ approximation for $S_{N}$ by $S_{N,l}$ defined in (\ref{Tl}).

\begin{proposition} \label{prop62} Under the assumptions of Theorem \ref{thm3},
	\begin{align*}
		\lim_{l \rightarrow \infty} \limsup_{N \to \infty} N^{-1} \mathbb{E}[S_{N} - S_{N,l}]^2= 0.
	\end{align*}
\end{proposition}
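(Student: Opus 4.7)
Set $Y_{n,l} := K(X_n) - K(X_{n,l}) - \E[K(X_n) - K(X_{n,l})]$, so that $S_N - S_{N,l} = \sum_{n=1}^N Y_{n,l}$. Since $(Y_{n,l})_{n \geq 1}$ is stationary, expanding the square yields
\[
N^{-1}\, \E[(S_N - S_{N,l})^2] = \E[Y_{0,l}^2] + 2\sum_{k=1}^{N-1}\bigl(1 - \tfrac{k}{N}\bigr) \E[Y_{0,l}\, Y_{k,l}].
\]
Hence it suffices to prove two separate limits as $l \to \infty$: (i) $\E[Y_{0,l}^2] \to 0$, and (ii) $\sum_{k=1}^\infty |\E[Y_{0,l} Y_{k,l}]| \to 0$. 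Once (i) and (ii) are shown, bounded convergence in the $k$-sum (uniformly in $N$) will conclude the proof.

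\noindent
\textbf{Step 1 (single-term variance).} Decompose $X_0 = X_{0,l} + Z_{0,l}$ with $Z_{0,l} := \sum_{j \geq l+1} a_j \varepsilon_{-j}$, independent of $X_{0,l}$. By Plancherel,
\[
K(X_0) - K(X_{0,l}) = \frac{1}{2\pi}\int_\R \widehat K(u)\, e^{-\iota u X_{0,l}}\bigl(e^{-\iota u Z_{0,l}} - 1\bigr)\, du.
\]
Squaring and using independence of $X_{0,l}$ and $Z_{0,l}$ produces a double integral whose essential factor is $1 - |\psi_{0,l}(u)|^2$, where $\psi_{0,l}(u) := \prod_{j \geq l+1} \phi_\varepsilon(u a_j)$. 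By Lemma~\ref{lemb}, $1 - |\phi_\varepsilon(u a_j)|^2 \leq c(|u a_j|^\alpha H(|u a_j|^{-1}) \wedge 1)$, and the short-memory hypothesis $\sum_j a_j^{\alpha/2} H^{1/2}(a_j^{-1}) < \infty$ forces $\psi_{0,l}(u) \to 1$ locally uniformly as $l \to \infty$. Combined with the rapid decay of $\widehat K(u) \phi(u)$ from ({\bf A2}), dominated convergence gives $\E[Y_{0,l}^2] \to 0$.

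\noindent
\textbf{Step 2 (sum of covariances; main obstacle).} By Plancherel,
\[
\E[Y_{0,l}\, Y_{k,l}] = \frac{1}{4\pi^2}\iint_{\R^2} \widehat K(u) \widehat K(-v)\, \Delta_{k,l}(u,v)\, du\, dv,
\]
with $\Delta_{k,l}(u,v) := \Cov(e^{-\iota u X_0} - e^{-\iota u X_{0,l}},\, e^{\iota v X_k} - e^{\iota v X_{k,l}})$. Expanding each difference and recording that $\varepsilon_{-j}$ ($j \geq 1$) enters $X_0$ with coefficient $a_j$ and $X_k$ with coefficient $a_{k+j}$, the four characteristic-function products making up $\Delta_{k,l}$ can be written in terms of factors $\phi_\varepsilon(\pm u a_j)$, $\phi_\varepsilon(\pm v a_{k+j})$, and $\phi_\varepsilon(-u a_j + v a_{k+j})$. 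The identity
\[
\phi_\varepsilon(x+y) - \phi_\varepsilon(x)\phi_\varepsilon(y) = \E[(e^{\iota x \varepsilon} - \phi_\varepsilon(x))(e^{\iota y \varepsilon} - \phi_\varepsilon(y))],
\]
together with Cauchy-Schwarz and Lemma~\ref{lemb}, yields $|\phi_\varepsilon(x+y) - \phi_\varepsilon(x)\phi_\varepsilon(y)| \leq c\sqrt{(1-|\phi_\varepsilon(x)|^2)(1-|\phi_\varepsilon(y)|^2)}$. Combined with $|\prod_j A_j - \prod_j B_j| \leq \sum_j|A_j - B_j|$ for unimodular factors, a careful accounting produces a pointwise estimate of the shape
\[
|\Delta_{k,l}(u,v)| \leq c \sum_{j \geq l+1} \sqrt{(|u a_j|^\alpha H(|u a_j|^{-1}) \wedge 1)\,(|v a_{k+j}|^\alpha H(|v a_{k+j}|^{-1}) \wedge 1)}.
\]
Integrating against $|\widehat K \phi|$ in $(u,v)$ (using ({\bf A2}) for tails), summing in $k$, and applying Cauchy-Schwarz gives
\[
\sum_{k=1}^\infty |\E[Y_{0,l}\, Y_{k,l}]| \leq c \Bigl(\sum_{j \geq l+1} a_j^{\alpha/2} H^{1/2}(a_j^{-1})\Bigr) \Bigl(\sum_{j \geq 1} a_j^{\alpha/2} H^{1/2}(a_j^{-1})\Bigr),
\]
which vanishes as $l \to \infty$ under the short-memory hypothesis.

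\noindent
\textbf{Expected difficulty.} The principal obstacle is the bookkeeping in Step 2 when $k \leq l$: the four truncated variables $X_{0,l}, X_{k,l}$ share the innovations $\varepsilon_{k-l}, \ldots, \varepsilon_{-1}$, so the corresponding characteristic-function products do not factor cleanly, and the naive bound $|\E[Y_{0,l}Y_{k,l}]| \leq \E[Y_{0,l}^2]$ summed over $1 \leq k \leq l$ only yields $l \cdot \E[Y_{0,l}^2]$, which need not vanish. Identifying the Fourier decomposition that isolates the tail factors $\sum_{j > l}$ uniformly in $k$, and verifying the associated $(u,v)$-integrability via ({\bf A2}), is the heart of the argument.
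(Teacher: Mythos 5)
Your overall strategy is genuinely different from the paper's: you expand $N^{-1}\E[(S_N-S_{N,l})^2]$ as a sum of covariances of the stationary sequence $Y_{n,l}$ and try to establish summable decay in the lag $k$, whereas the paper writes $S_N-S_{N,l}=\sum_n\sum_j\mathscr{P}^l_{n,n-j}$ as a sum of martingale-difference projections onto individual innovations. The paper's choice is not cosmetic: since $\E[\mathscr{P}^l_{n,n-j}\mathscr{P}^l_{n',n'-j'}]=0$ whenever $n-j\neq n'-j'$, the double sum collapses onto the innovation index and \emph{no sum over time lags ever has to be controlled}; the whole difficulty you isolate in Step 2 is engineered away, and what remains is a term-by-term Plancherel estimate using Lemma~\ref{lemb} and the Potter bounds (split into $j\leq m_0$, $m_0<j\leq l$, $j>l$).

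The genuine gap in your proposal is exactly the step you flag as the ``heart of the argument'' and then do not carry out: the pointwise bound $|\Delta_{k,l}(u,v)|\leq c\sum_{j>l}\sqrt{(|ua_j|^{\alpha}H(|ua_j|^{-1})\wedge1)(|va_{k+j}|^{\alpha}H(|va_{k+j}|^{-1})\wedge1)}$. If you actually expand $\E[AB]-\E[A]\E[B]$ with $A=e^{-\iota uX_{0,l}}(e^{-\iota uZ_{0,l}}-1)$ and $B=e^{\iota vX_{k,l}}(e^{\iota vZ_{k,l}}-1)$ into the four joint characteristic functions, the two products share a ``low-index'' prefactor only up to a discrepancy $\prod_{i}\phi_{\varepsilon}(-ua_i+va_{k+i})-\prod_i\phi_{\varepsilon}(-ua_i)\phi_{\varepsilon}(va_{k+i})$ over the indices $i\leq l$, which is bounded by a sum of diagonal interactions at sites $i\leq l$, not $i>l$; this gets multiplied by a small tail factor, producing terms of the form (sum over $i\leq l$)$\times$(product of two separate tail sums over $i>l$) that are \emph{not} of your claimed shape. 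These extra terms appear to be controllable (after Cauchy--Schwarz in $u$ and $v$ they contribute $O\bigl((\sum_j b_j)^3\sum_{j>l}b_j\bigr)$ with $b_j=|a_j|^{\alpha/2}H^{1/2}(|a_j|^{-1})$), but establishing that requires precisely the case analysis ($k\leq l$ versus $k>l$, shared versus unshared innovations) and the verification of $(u,v)$-integrability of factors like $\phi_{\varepsilon}(-ua_j+va_{k+j})$ -- which, unlike $\phi_{\varepsilon}(ua_j)\phi_{\varepsilon}(va_i)$, do not separate in $u$ and $v$ -- that your write-up defers. As it stands, the central estimate is asserted rather than proved, and in the stated form it is not what the expansion yields; I would recommend either completing this bookkeeping in full or switching to the projection decomposition, which makes the lag-summation problem disappear.
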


\begin{proof} Observe that $S_{N}-S_{N,l}=\sum^N_{n=1} \sum^{\infty}_{j=1} \mathscr{P}^{l}_{n,n-j}$ where  
		\begin{align*}
\mathscr{P}^{l}_{n,n-j}=\sum^N_{n=1} \sum^{\infty}_{j=1}\Big( \mathbb{E}\big[K(X_n)|\mathcal{F}_{n-j}\big]-\mathbb{E}\big[K(X_n)|\mathcal{F}_{n-j-1}\big]\Big) - \Big( \mathbb{E}\big[K(X_{n,l})|\mathcal{F}_{n-j}\big]-\mathbb{E}\big[K(X_{n,l})|\mathcal{F}_{n-j-1}\big]\Big)
	\end{align*}
	and $\mathbb{E}[\mathscr{P}^{l}_{n,n-j} \mathscr{P}^{l}_{n{'},n{'}-j{'}}] = 0$ whenever $n-j\neq n{'}-j{'}$. By assumption ({\bf A2}), there exists $m_{0} \in \mathbb{N}$ such that $\prod\limits_{i=1}^{m_{0}} |\phi_{\varepsilon}(a_i u)|$ is less than a constant multiple of $\frac{1}{1+ |u|^{4}}$.

	\noindent
	{\bf Step 1} We estimate $\mathbb{E}\big[\sum\limits^N_{n=1} \sum\limits^{m_0}_{j=1} \mathscr{P}^{l}_{n,n-j}\big]^2$ for $l>m_0$. Clearly,
\begin{align*}
\mathbb{E}\big[\sum^N_{n=1} \sum^{m_0}_{j=1} \mathscr{P}^{l}_{n,n-j}\big]^2
&=\sum^N_{n=1} \sum^{m_0}_{j=1}\sum^N_{m=1} \sum^{m_0}_{i=1}\mathbb{E}\big[ \mathscr{P}^{l}_{n,n-j} \mathscr{P}^{l}_{m,m-i}\big]\\
&=\sum^N_{n=1} \sum^{m_0}_{j=1}\sum^{m_0}_{i=1}\mathbb{E}[ \mathscr{P}^{l}_{n,n-j} \mathscr{P}^{l}_{n-j+i,n-j}] 1_{\{1\leq n-j+i\leq N\}}\\
&\leq \sum^N_{n=1} \sum^{m_0}_{j=1}\sum^{m_0}_{i=1}\mathbb{E}\big[ |\mathscr{P}^{l}_{n,n-j}|^2\big]+ \sum^N_{n=1} \sum^{m_0}_{j=1}\sum^{m_0}_{i=1}\mathbb{E}\big[|\mathscr{P}^{l}_{n-j+i,n-j}|^2\big].
\end{align*} 
Note that $\mathscr{P}^{l}_{n,n-j}$ and $\mathscr{P}^{l}_{n-j+i,n-j}$ have the same law as  $\mathscr{P}^{l}_{1,1-j}$ and $\mathscr{P}^{l}_{1,1-i}$, respectively. Hence,
\begin{align*}
\mathbb{E}\big[\sum^N_{n=1} \sum^{m_0}_{j=1} \mathscr{P}^{l}_{n,n-j}\big]^2
&\leq 2 m_0 N\mathbb{E}\big[K(X_1)-K(X_{1,l})\big]^2.
\end{align*} 
Recall the choice of $m_0$. $X_{1,m_0}=\sum\limits^{m_0}_{j=1}a_n\varepsilon_{1-j}$ has a bounded densify function $f_{m_0}(x)$ and thus 
\begin{align} \label{K1}
&\lim_{l \rightarrow \infty} \limsup_{N \to \infty} N^{-1} \mathbb{E}\big[K(X_1)-K(X_{1,l})\big]^2\nonumber \\
&=\lim_{l \rightarrow \infty} \limsup_{N \to \infty} N^{-1} \mathbb{E}\Big[\int_{\mathbb{R}} |K(x+X_1-X_{1,m_0})-K(x+X_{1,l}-X_{1,m_0})|^2 f_{m_0}(x)\, dx \Big]\nonumber \\
&\leq c_1\lim_{l \rightarrow \infty} \limsup_{N \to \infty} N^{-1}  \mathbb{E}\Big[\int_{\mathbb{R}} |\widehat{K}(u)(e^{-\iota u(X_1-X_{1,m_0})}-e^{-\iota u(X_{1,l}-X_{1,m_0})}|^2 du \Big] \nonumber \\ 
&\leq c_1 \lim_{l \rightarrow \infty} \limsup_{N \to \infty} N^{-1} \int_{\mathbb{R}} |\widehat{K}(u)|^2 \mathbb{E}\big[|(e^{-\iota u X_1}-e^{-\iota u X_{1,l}}|^2  \big] du=0,
\end{align}
where in the last equality we used the dominated convergence theroem and $X_{1,l}\overset{a.s.}{\to} X_{1}$ as $l\to\infty$. 

\noindent
	{\bf Step 2} We estimate $\mathbb{E}\big[\sum\limits^N_{n=1} \sum\limits^{l}_{j=m_0+1} \mathscr{P}^{l}_{n,n-j}\big]^2$ for $l>m_0$. By Plancherel formula,
\begin{align*}
\mathscr{P}^l_{n,n-j}=\frac{1}{2\pi}\int_{\mathbb{R}}\widehat{K}(u)\prod^{j-1}_{k=1}\phi_{\varepsilon}(-a_ku)\big(e^{- \iota u a_{j} \varepsilon_{n-j}} - \phi_{\varepsilon}(-a_{j} u)\big) e^{- \sum\limits_{k=j+1}^{l} \iota u  a_{k} \varepsilon_{n-k}} (e^{- \sum\limits_{k=l+1}^{\infty} \iota u  a_{k} \varepsilon_{n-k}}-1)\, du.
\end{align*}
Hence, $\mathbb{E}\big[\sum\limits^N_{n=1} \sum\limits^{l}_{j=m_0+1} \mathscr{P}^{l}_{n,n-j}\big]^2$ is less than
\begin{align*}
&c_2 \sum^N_{n=1}\sum^{l}_{j=m+1}\sum^l_{i=j}\int_{\mathbb{R}^2}\frac{1}{1+u^4}\frac{1}{1+v^4}\Big|\mathbb{E}\big[(e^{- \iota u a_{j} \varepsilon_{n-j}} - \phi_{\varepsilon}(-a_{j} u))(e^{\iota v a_{i} \varepsilon_{n-j}} - \phi_{\varepsilon}(a_{i} v))\big]\Big| \\
&\qquad\qquad \times \Big|\mathbb{E}\big[(e^{- \sum\limits_{k=l+1}^{\infty} \iota u  a_{k} \varepsilon_{n-k}}-1)(e^{\sum\limits_{k=l+1}^{\infty} \iota v  a_{k+i-j} \varepsilon_{n-k}}-1)\big]\Big|\, du\, dv.
\end{align*}
Note that $\mathbb{E}\big[\big|e^{- \sum\limits_{k=l+1}^{\infty} \iota u  a_{k} \varepsilon_{n-k}}-1\big|^2\big]=4\mathbb{E}\big[\sin^2\big(\frac{1}{2}\sum\limits_{k=l+1}^{\infty}  u  a_{k} \varepsilon_{n-k}\big)\big]
$. So, for any $\alpha'\in (0,\alpha)$ with $\alpha'\beta>1$, by von Bahr-Esseen inequality in \cite{vBE} and the definition of innovations,
\[
\mathbb{E}\big[\big|e^{- \sum\limits_{k=l+1}^{\infty} \iota u  a_{k} \varepsilon_{n-k}}-1\big|^2\big]\leq c_3 \sum^{\infty}_{k=l+1}|ua_k|^{\alpha'}.
\]
Now, by Cauchy-Schwarz inequality and Lemma \ref{lemb}, $\mathbb{E}\big[\sum\limits^N_{n=1} \sum\limits^{l}_{j=m_0+1} \mathscr{P}^{l}_{n,n-j}\big]^2$ is less than  
\begin{align*}
&c_4 \sum^N_{n=1}\sum^{l}_{j=m_0+1}\sum^l_{i=j} \int_{\mathbb{R}^2}\frac{1}{1+u^4}\frac{1}{1+v^4} \Big(\big(|ua_j|^{\frac{\alpha}{2}}H^{\frac{1}{2}}(|ua_j|^{-1})\big)\wedge 1\Big)\Big(\big(|va_i|^{\frac{\alpha}{2}}H^{\frac{1}{2}}(|va_i|^{-1})\big)\wedge 1\Big)\\
&\qquad\qquad \times \big(\sum^{\infty}_{k=l+1}|ua_k|^{\alpha'}\big)^{\frac{1}{2}} \big(\sum^{\infty}_{k=l+1} |ua_{k+i-j}|^{\alpha'}\big)^{\frac{1}{2}} du\, dv\\
&\leq c_4 N  \sum^{\infty}_{k=l+1}|a_k|^{\alpha'}  \Big(\sum^{l}_{j=m+1}\int_{\mathbb{R}}\frac{|u|^{\frac{\alpha'}{2}}}{1+u^4}\Big(\big(|ua_j|^{\frac{\alpha}{2}}H^{\frac{1}{2}}(|ua_j|^{-1})\big)\wedge 1\Big)\, du\Big)^2.
\end{align*}

By the Potter bounds \cite[Theorem 1.5.6]{BGT89}, 
\begin{align*}
&\int_{\mathbb{R}}\frac{|u|^{\frac{\alpha'}{2}}}{1+u^4} \Big(\big(|ua_j|^{\frac{\alpha}{2}}H^{\frac{1}{2}}(|ua_j|^{-1})\big)\wedge 1\Big)\, du \\
&\leq c_5\int_{|u|\leq |a_j|^{-1}}\frac{|u|^{\frac{\alpha'}{2}}}{1+u^4} |ua_j|^{\frac{\alpha}{2}}H^{\frac{1}{2}}(|a_j|^{-1}) du+\int_{|u|>|a_j|^{-1}}\frac{|u|^{\frac{\alpha'}{2}}}{1+u^4}\, du\leq c_6 |a_j|^{\frac{\alpha}{2}}H^{\frac{1}{2}}(|a_j|^{-1}).
\end{align*}
Therefore,
\begin{align} \label{K2}
\mathbb{E}\Big[\sum\limits^N_{n=1} \sum\limits^{l}_{j=m_0+1} \mathscr{P}^{l}_{n,n-j}\Big]^2\leq c_7 N \sum^{\infty}_{k=l+1}|a_k|^{\alpha'} \Big(\sum^{l}_{j=m+1}|a_j|^{\frac{\alpha}{2}}H^{\frac{1}{2}}(|a_j|^{-1})\Big)^2.
\end{align}

\noindent
	{\bf Step 3} We estimate $\mathbb{E}\big[\sum\limits^N_{n=1} \sum\limits^{\infty}_{j=l+1} \mathscr{P}^{l}_{n,n-j}\big]^2$. Observe that 
\begin{align*}
\mathbb{E}\big[\sum^N_{n=1} \sum^{\infty}_{j=l+1} \mathscr{P}^{l}_{n,n-j}\big]^2
&=\sum^N_{n=1} \sum^{\infty}_{j=l+1}\sum^{\infty}_{i=l+1}\mathbb{E}[ \mathscr{P}^{l}_{n,n-j} \mathscr{P}^{l}_{n-j+i,n-j}] 1_{\{1\leq n-j+i\leq N\}}
\end{align*}
and 
\begin{align*}
\mathscr{P}^{l}_{n,n-j}=\frac{1}{2\pi}\int_{\mathbb{R}}\widehat{K}(u)\prod^{j-1}_{k=1}\phi_{\varepsilon}(-a_ku)\big(e^{- \iota u a_{j} \varepsilon_{n-j}} - \phi_{\varepsilon}(-a_{j} u)\big) e^{- \sum\limits_{k=j+1}^{\infty} \iota u  a_{k} \varepsilon_{n-k}} du
\end{align*}
for $j\geq l+1$.  

By assumption ({\bf A2}), Cauchy-Schwartz inequality and Lemma \ref{lemb},
\begin{align*}
&\big|\mathbb{E}[ \mathscr{P}^{l}_{n,n-j} \mathscr{P}^{l}_{n-j+i,n-j}]\big|\\
&\leq c_8 \int_{\mathbb{R}^2}\frac{1}{1+u^4}\frac{1}{1+v^4}\Big|\mathbb{E}\big[\big(e^{- \iota u a_{j} \varepsilon_{n-j}} - \phi_{\varepsilon}(-a_{j} u)\big)\big(e^{\iota v a_{i} \varepsilon_{n-j}} - \phi_{\varepsilon}(a_{i} v)\big]\Big| du\, dv\\
&\leq c_9 \int_{\mathbb{R}}\frac{1}{1+u^4} \Big(\big(|ua_j|^{\frac{\alpha}{2}} H^{\frac{1}{2}}(|ua_j|^{-1})\big)\wedge 1\Big)\, du\int_{\mathbb{R}}\frac{1}{1+v^4} \Big(\big(|va_i|^{\frac{\alpha}{2}}H^{\frac{1}{2}}(|va_i|^{-1})\big)\wedge 1\Big)\, dv.
\end{align*}
Then, using similar arguments as in {\bf Step 2}, we could obtain 
\begin{align} \label{K3}
\mathbb{E}\big[\sum^N_{n=1} \sum^{\infty}_{j=l+1} \mathscr{P}^{l}_{n,n-j}\big]^2
&\leq c_{10}\sum^N_{n=1}\Big(\sum^{\infty}_{j=l+1}|a_j|^{\frac{\alpha}{2}} H^{\frac{1}{2}}(|a_j|^{-1})\Big)^2.
\end{align}

\noindent
{\bf Step 4} Combining \eqref{K1}, \eqref{K2} and \eqref{K3} gives the desired result.
\end{proof}

\medskip
\noindent
{\bf Proof of Theorem \ref{thm3}}:  By $l$-dependence, as $N\to\infty$, $N^{-1/2} S_{N,l} \overset{\mathcal{L}}{\longrightarrow}  N(0,\theta^{2}_{l})$ where $\theta^{2}_{l} = \lim_{N \rightarrow \infty} N^{-1}\text{Var}(S_{N,l})\in (0,\infty)$. Note that
\[
N^{-1} \text{Var}(S_{N,l_1} - S_{N,l_2}) \leq 2 N^{-1} \text{Var}(S_{N} - S_{N,l_1}) + 2 N^{-1} \text{Var}(S_{N} - S_{N,l_2}).
\]
So, by Proposition \ref{prop62}, $\{\theta^{2}_{l}, l =1, 2, \dots \}$ is a Cauchy sequence.
Letting $\theta^{2}=\lim_{l\to\infty} \theta^{2}_{l}$, we see that $N^{-1/2} S_{N}$ converges weakly to $N(0,\theta^2)$ as $N\to\infty$.

For each $m\in\mathbb{N}$ and $0=t_0<t_1<\cdots<t_m<\infty$, 
\[
N^{-1/2}(S_{[Nt_m],l}-S_{[Nt_{m-1}],l}),N^{-1/2}(S_{[Nt_{m-1}],l}-S_{[Nt_{m-2}],l})\cdots, N^{-1/2}(S_{[Nt_1],l}-S_{[Nt_{0}],l})
\]
are independent whenever $N$ is large. So, by Proposition \ref{prop62} and \cite[Theorem 4.2]{Billingsley}, as $N\to\infty$,
\[
\big\{ N^{-1/2}S_{[Nt]}: t\geq 0\big\} \overset{\rm f.d.d.}{\longrightarrow} \big\{ |\theta| W_t:\; t\geq 0 \big\}.
\]

\bigskip
\noindent
{\bf Acknowledgements.} F. Xu is partially supported by the National Natural Science Foundation of China (Grant No. 12371156). J.~Yu is supported by National Key R\&D Program of China (No.~2021YFA1002700) and National Natural Science Foundation of China (Grant No.~12101238 and 12271010).

$\begin{array}{cc}

\begin{minipage}[t]{1\textwidth}

{\bf Yudan Xiong}

\medskip
School of Statistics, East China Normal University, Shanghai 200262, China\\ [3pt]
\texttt{52284404007@stu.ecnu.edu.cn}

\medskip\medskip

{\bf Fangjun Xu}

\medskip

KLATASDS-MOE, School of Statistics, East China Normal University, Shanghai, 200062, China 
\\ [3pt]
NYU-ECNU Institute of Mathematical Sciences at NYU Shanghai, Shanghai, 200062, China\\ [3pt]
\texttt{fjxu@finance.ecnu.edu.cn}

\medskip\medskip

{\bf Jinjiong Yu}

\medskip

KLATASDS-MOE, School of Statistics, East China Normal University, Shanghai, 200062, China \\ [3pt]
NYU-ECNU Institute of Mathematical Sciences at NYU Shanghai, Shanghai, 200062, China\\ [3pt]
\texttt{jjyu@sfs.ecnu.edu.cn}

\end{minipage}

\hfill

\end{array}$

\end{document}